\def\@Rref#1{\hbox{\rm \ref{#1}}}
\def\Rref#1{\@Rref{#1}}
\theoremstyle{plain}
\newtheorem{theorem}{Theorem}[section]
\newtheorem{proposition}[theorem]{Proposition}
\newtheorem{lemma}[theorem]{Lemma}
\newtheorem{corollary}[theorem]{Corollary}
\theoremstyle{definition}
\newtheorem{definition}{Definition}[section]
\newtheorem{example}[definition]{Example}
\newtheorem{remark}[definition]{Remark}
\begin{document}

\title[Characterization of Decay Rates]{Characterization of Decay Rates for Discrete Operator Semigroups}

\thispagestyle{plain}

\author{Masashi Wakaiki}
\address{Graduate School of System Informatics, Kobe University, Nada, Kobe, Hyogo 657-8501, Japan}
 \email{wakaiki@ruby.kobe-u.ac.jp}

\begin{abstract}
Let $T$ be a power-bounded linear operator on a Hilbert space $X$,
and let $S$ be a bounded linear  operator from another Hilbert space $Y$ to $X$. We investigate 
the non-exponential
rate of decay of $\|T^nS\|$ as $n \to \infty$.
First, when $X = Y$ and
$S$ commutes with $T$, 
we characterize the decay rate of $\|T^nS\|$
in terms of the growth rate of $\|(\lambda I - T)^{-k}S\|$ as 
$|\lambda| \downarrow 1$ for some 
$k \in \mathbb{N}$.
Next, we provide another characterization by means of an 
integral estimate of $\|(\lambda I - T)^{-k}S\|$.
The second characterization is then applied to asymptotic estimates for
perturbed discrete operator semigroups.
Finally, we present some results on
the relation  between the decay rate of $\|T^nS\|$
and the boundedness of the sum $\sum_{n=1}^{\infty} 
f(n)\|T^nSy\|^p$ for all $y \in Y$ in the Banach space setting, where $f \colon \mathbb{N} \to(0,\infty)$ and $p \geq 1$.
\end{abstract}

\subjclass[2020]{Primary 47A10; Secondary 47D03 $\cdot$ 47A30}
\keywords{Asymptotics, 
	Discrete operator semigroup,  Rate of decay, 
	Resolvent condition} 

\maketitle

\section{Introduction}
Let $X$ and $Y$ be complex Banach spaces.
We denote by $\mathcal{L}(Y,X)$ the space of 
bounded linear  operators from $Y$ to $X$, and
set $\mathcal{L}(X) \coloneqq \mathcal{L}(X,X)$.
Let $T \in \mathcal{L}(X)$ be such that 
the spectral radius $r(T)$ of $T$ is at most $1$, and let $S \in \mathcal{L}(Y,X)$.
In this paper, we investigate
the asymptotic behavior of $\|T^nS\|$ as $n \to \infty$ and
relate it to the growth of
$\|(\lambda I - T)^{-k}S\|$ as $|\lambda| \downarrow 1$
for  $k \in \mathbb{N}$.

To motivate our problem,
we consider the following sampled-data system with
sampling period $\tau >0$:
\begin{equation}
	\label{eq:sampled_data_sys}
	\begin{cases}
		\dot z (t) = Az(t)+Bu(t), & t \geq 0;\qquad z(0) = x \in X, \\
		u(t) = Fz(n\tau),& n\tau \leq t < (n+1)\tau,\,n\in \mathbb{N}_0,
	\end{cases}
\end{equation}
where $X$ and $U$ are complex Banach spaces, $A$ is the generator
of a $C_0$-semigroup $(e^{At})_{t \geq 0}$ on $X$,
$B \in \mathcal{L}(U,X)$, $F \in \mathcal{L}(X,U)$, and 
$\mathbb{N}_0 \coloneqq \mathbb{N}
\cup \{ 0\}$.
In the context of control systems, $z$ is the state of the plant and is
sampled at time $t=n\tau$ for each $n \in \mathbb{N}_0$. 
The control input $u$ is generated from the sampled state $z(n\tau)$.

Let $z_{\rm d}(n) \coloneqq z(n\tau)$ for $n \in 
\mathbb{N}_0$, and define $T \in \mathcal{L}(X)$ by
\[
T\xi \coloneqq e^{A\tau}\xi + \int^{\tau}_0 e^{At} BF \xi dt,\quad \xi \in X.
\] 
Then the discrete trajectory $z_{\rm d}$ satisfies
\[
z_{\rm d} (n+1) = Tz_{\rm d}(n),\quad 
n \in \mathbb{N}_0;\qquad z_{\rm d}(0) = x.
\]
This discrete-time system can be regarded as a discretized version
of
the following continuous-time system:
\begin{equation}
	\label{eq:cont_time_sys}
	\dot z_{\rm c}(t) = (A+BF)z_{\rm c}(t),\quad t \geq 0; 
	\qquad z_{\rm c}(0) = x.
\end{equation}
Intuitively, the sampled-data system \eqref{eq:sampled_data_sys}
approximates the continuous-time system \eqref{eq:cont_time_sys} when
the sampling period $\tau$ is sufficiently small. 
However,
the asymptotic behavior of the sampled-data system \eqref{eq:sampled_data_sys}
may differ from that of the continuous-time system \eqref{eq:cont_time_sys}  when $X$ is an infinite-dimensional space. In fact, even exponential stability 
is not preserved for a certain system; see 
\cite[Remark~3.3]{Logemann2003} and
\cite{Rebarber2002}.
This motivates us to find 
sufficient conditions for preservation of stability, and 
such conditions were developed for exponential stability in \cite{Logemann2003,Rebarber2006}, for
strong stability in \cite{Wakaiki2021SIAM}, and for polynomial stability in \cite{Wakaiki2023ESAIM}. Here we are interested in 
the decay rate of  the solution $z$ of 
the sampled-data system 
\eqref{eq:sampled_data_sys} with a smooth initial state $x \in  D(A)$.
For simplicity, assume that $A$ is invertible. Then
the domain $D(A)$ of $A$ coincides with the range of $A^{-1}$.
Since there exists a constant $M>0$ such that 
$\|z(t)\| \leq M \|z_{\rm d}(n)\|$ for all $t \in [n \tau,(n+1)\tau )$
and $n \in \mathbb{N}_0$,
uniform rates of decay for the solutions $z$ with initial states $x \in  D(A)$ are obtained by investigating the quantitative behavior of
$\|T^n A^{-1}\|$.

We begin by reviewing the existing literature in the case $S = I$.
It is well known that the following discrete analogue of one implication in
the Hille-Yosida theorem for $C_0$-semigroups holds:
If $T$ is {\em power-bounded}, i.e., satisfies 
$\sup_{n \in \mathbb{N}_0} \|T^n\| < \infty$,
then $r(T) \leq 1$ and there exists a 
constant $M>0$ such that 
\begin{equation}
	\label{eq:Kreiss_cond_intro}
	\|(\lambda I - T)^{-k}\| \leq \frac{M}{(|\lambda|-1)^k}
\end{equation}
for all $k \in \mathbb{N}$ and $|\lambda| > 1$; see, e.g.,
\cite{Lubich1991}.
However, the converse implication is not true; see \cite{McCarthy1971} for a 
counterexample.

We call $T \in \mathcal{L}(X)$  a {\em 
	Kreiss operator}
if $r(T) \leq 1$ and 
there exists $M>0$
such that \eqref{eq:Kreiss_cond_intro} holds for $k=1$.
If $T$ is a Kreiss operator, then $\|T^n\| = O(n)$ as $n \to \infty$, i.e.,
there exist constants $K>0$ and $n_0 \in \mathbb{N}_0$ such that $\|T^n\| \leq Kn$
for all $n \geq n_0$; see \cite{Lubich1991}.
This growth rate $n$ cannot in general be  improved, as shown in \cite{Shields1978}.
If $T \in \mathcal{L}(X)$ satisfies $r(T) \leq 1$ and if there exist
constants $\alpha \geq 0$ and $M_{\alpha} >0$ such that 
\[
\|(\lambda I - T)^{-1}\| \leq \frac{M_\alpha}{(|\lambda|-1)^\alpha},
\]
for all $1 < |\lambda| < 2$,
then $\|T^n\|= O(n^{\alpha})$ as $n \to \infty$.
Conversely, if $\|T^n\|= O(n^{\alpha})$ as $n \to \infty$ for some 
$\alpha \geq 0$, then
\[
\|(\lambda I - T)^{-1}\| \leq \frac{M_{\alpha+1}}{(|\lambda|-1)^{\alpha+1}},
\]
for all $1 < |\lambda| < 2$ and some constant $M_{\alpha+1} > 0$; see \cite{Eisner2007} for these two implications.
In the Hilbert space setting, a Kreiss operator $T$ satisfies $\|T^n\| = O(n/\sqrt{\log n})$
as $n \to \infty$, which was independently proved in \cite{Cohen2020,Bonilla2021}.
For a UMD space $X$, the estimate $\|T^n\| = O(n/(\log n)^{\beta})$ 
was obtained  in 
\cite{Cuny2020}, where $\beta\in(0,1/2]$ is the reciprocal of the minimum of
the finite cotypes of $X$ and its dual.

We call $T \in \mathcal{L}(X)$ a {\em strong
	Kreiss operator} if $r(T) \leq 1$ and 
there exists $M>0$
such that \eqref{eq:Kreiss_cond_intro} holds for all $k\in \mathbb{N}$.
If $T$ is a strong Kreiss operator, then
$\|T^n\| = O(\sqrt{n})$ as $n \to \infty$; see
\cite{McCarthy1971,Lubich1991}. Moreover,
the growth rate $\sqrt{n}$ cannot in general be 
replaced by a better one; see \cite{Lubich1991}.
It was proved in \cite{Cohen2020} that 
a strong Kreiss operator $T$ on a Hilbert space
satisfies $\|T^n\| = O((\log n)^{\beta})$
as $n \to \infty$, where $\beta>0$ is a constant dependent on $T$.
Other estimates were obtained in the case where $X$ 
is the $L^p$-space with $1 < p < \infty$ \cite{Arnold2023Kreiss} and 
in the case where $X$ is a UMD space \cite{Deng2024}.
A survey on Kreiss operators and 
strong Kreiss 
operators can be found in \cite{Cohen2023}.

Next, we review existing studies in the case $S = I-T$.
We call $T \in \mathcal{L}(X)$ a {\em Ritt operator} if 
$r(T) \leq 1$ and there exists a 
constant $M>0$ such that 
\begin{equation}
	\label{eq:Ritt_cond}
	\|(\lambda I - T)^{-1}\| \leq \frac{M}{|\lambda-1|}
\end{equation}
for all $|\lambda| > 1$.
The estimate~\eqref{eq:Ritt_cond}
on the sector 
\[
\{\lambda \in \mathbb{C} \setminus \{1 \}:
|\arg (\lambda-1)| < \pi/2 + \delta
\}
\] 
for some $\delta \in (0,\pi/2)$ implies that 
$T$ is power-bounded and $\|T^n(I-T)\| = O(n^{-1})$
as $n \to \infty$, as shown in \cite{Komatsu1968}.
The converse statement was also true; see \cite{Nevanlinna1997}.
It was later proved independently in 
\cite{Nagy1999, Lyubich1999} that
$T$ is a Ritt operator if and only if
$T$ is power-bounded and $\|T^n(I-T)\| = O(n^{-1})$
as $n \to \infty$.
Since the estimate $\|T^n(I-T)\| = O(n^{-1})$
as $n \to \infty$ does not imply that 
$T$ is power-bounded (see \cite{Kalton2004}),
power-boundedness cannot be omitted from
the above characterization of Ritt operators.
Estimates for $(\lambda-1) (\lambda I - T)^{-1}$
were also studied in connection with maximum regularity
problems for discrete-time evolution equations; see, e.g.,
\cite{Blunck2001,Blunck2001JFA,Kalton2008,Kemmochi2016}.
In \cite{El-Fallah2002},
a generalization of Ritt operators and Kreiss operators
was introduced and applied to derive
norm estimates
for $T^n$.
See 
\cite{Cohen2023}
for further information on Ritt operators.

In \cite{Nevanlinna2001}, it was shown that
if $T$ is a Kreiss opeator and if there exist constants $M>0$
and $\alpha \in (0,1]$ such that  
\begin{equation}
	\label{eq:gen_Ritt}
	\|(\lambda I- T)^{-1}\| 
	\leq \frac{M}{|\lambda-1|^{1/\alpha}}
\end{equation}
for all $1 < |\lambda| < 2$,
then 
\begin{equation}
	\label{eq:gen_Ritt_decay}
	\|T^n(I-T)\| = O\left(  \frac{1}{n^{2\alpha - 1}} \right)
\end{equation}
as $n \to \infty$.
The following characterization for the specific decay rate $n^{-1/2}$ was obtained in \cite{Dungey2008}:
$T$ is power-bounded and satisfies 
$\|T^n(I-T)\| = O(n^{-1/2})$ as $n \to \infty$ if and only if
$T = cI + (1-c)S$ for some constant $c \in (0,1)$ and
power-bounded operator $S$.
In
\cite{Dungey2009},
the decay estimate $\|T^n(I-T)\| = O(n^{-\alpha})$ 
as $n \to \infty$
in the case $1/2 <  \alpha < 1$
was also characterized in terms of 
norm estimates for all
powers of the resolvent.

Given a power-bounded operator
$T$ and a constant $\alpha \in (0,1]$,
the estimate \eqref{eq:gen_Ritt} holds
for all $1 < |\lambda| < 2$ and some constant $M>0$ if and only if the intersection
of the spectrum of $T$ and the unit circle is contained in $\{1\}$
and 
$\|(e^{i \theta} I- T)^{-1}\| = O(|\theta|^{-1/\alpha})$
as $\theta \to 0$, which
can be obtained from 
Theorem~3.10 and the proof of Lemma~3.9 in 
\cite{Seifert2016}; see also \cite{Paunonen2015}
and \cite{Cohen2016}.
Using this equivalence, 
we see from \cite{Seifert2015,Seifert2016} that 
the estimate \eqref{eq:gen_Ritt_decay} can be improved to
\begin{equation}
	\label{eq:gen_Ritt_decay_S}
	\|T^n(I-T)\| = O\left(
	\left( \frac{\log n}{n} \right)^{\alpha}  \right)
\end{equation}
as $n \to \infty$. 
It was also shown in \cite{Seifert2016} that
the upper bound given in \eqref{eq:gen_Ritt_decay_S}
is sharp for $0 < \alpha < 1/2$ in general.
The growth rate $|\theta|^{-1/\alpha}$ of $\|(e^{i \theta} I- T)^{-1}\|$
as $\theta \to 0$ can be generalized to
a continuous non-increasing function; see \cite{Seifert2015,Seifert2016} for details.
As also shown in \cite{Seifert2016}, when $X$ is a Hilbert space,
the estimate~\eqref{eq:gen_Ritt_decay_S}
can be strengthen to $\|T^n(I-T)\| = O(n^{-\alpha})$
as $n \to \infty$.
This result in the Hilbert space setting
was extended from the polynomial case to the wider class
of functions having reciprocally positive increase in \cite{Ng2020}.
The above-mentioned results in \cite{Seifert2015,Seifert2016,Ng2020} are discrete analogues
of those established 
for $C_0$-semigroups in \cite{Batty2008,Borichev2010,Rozendaal2019}; see also the survey article 
\cite{Chill2020}
for the $C_0$-semigroup results.
For more information 
on estimates for the rate of decay of $\|T^n(I-T)\|$, 
we refer to \cite{Batty2022Survey}, which
also includes a survey 
on the asymptotic behavior of $\|T^nf(T)\|$ for bounded 
holomorphic 
functions $f$ on $\mathbb{D}$.

We characterize the decay rate of $\|T^n S\|$
in terms of the  growth rate of $\|(\lambda I - T)^{-k}S\|$ as $|\lambda|
\downarrow 1$ for some $k \in \mathbb{N}$ when $X=Y$ and $S$ commutes with $T$.
A special case of this characterization is stated as follows.
\begin{theorem}
	\label{thm:intro1}
	Let $X$ be a Hilbert space. Let 
	$T \in \mathcal{L}(X)$ be 
	power-bounded, and
	suppose that $S \in \mathcal{L}(X)$ commutes with $T$.
	Then the following statements are equivalent
	for fixed $\alpha > 0$ and $k \in \mathbb{N}$ satisfying
	$k > \alpha$:
	\begin{enumerate}[label=\upshape(\roman*), leftmargin=*, widest=ii]
		\item $\displaystyle
		\|T^nS\| = O \left(
		\frac{1}{
			n^{\alpha}}
		\right)$ as $n \to \infty$.
		\item 
		There exists a constant $M>0$ such that 
		\[
		\|(\lambda I - T)^{-k} S\| \leq 
		\frac{M}{(|\lambda|-1)^{k-\alpha}}
		\]
		for all $|\lambda| > 1$.
	\end{enumerate}
\end{theorem}
We also present a simple example showing that 
the estimates given in Theorem~\ref{thm:intro1}
cannot in general be improved.
Although 
the special case above is limited to polynomials, the proposed 
approach
can be extended to
a class of functions called $\alpha$-bounded 
regularly varying functions for $\alpha \geq 0$,
which are variants of regularly varying functions;
see Theorem~\ref{sec:main_result} for 
the extended case.
Regularly varying functions are products of
polynomials and slowly varying functions, and 
give natural refinements of polynomial scales.
This class of functions
have been extensively applied in 
various areas of analysis, including
asymptotic estimates for $C_0$-semigroups as discussed in \cite{Batty2016}.

We provide another characterization of the decay rate of $\|T^nS\|$
through an integral estimate for
resolvents.
While 
here we state the polynomial case,
it can also be extended to $\alpha$-bounded regularly varying functions; see Theorem~\ref{thm:int_cond_chara} for 
the extended case.
\begin{theorem}
	\label{thm:intro2}
	Let $X$ be a Hilbert space, and let $Y$ be a Banach space.
	Let 
	$T \in \mathcal{L}(X)$ be power-bounded, and let 
	$S \in \mathcal{L}(Y,X)$.
	Then the following statements are equivalent for fixed $\alpha 	> 0$ and $k \in \mathbb{N}$ satisfying
	$k > \alpha + 1/2$:
	\begin{enumerate}[label=\upshape(\roman*), leftmargin=*, widest=ii]
		\item $\displaystyle
		\|T^nS\| = O \left(
		\frac{1}{
			n^{\alpha}}
		\right)$ as $n \to \infty$.
		\item There exists a constant $M>0$ such that 
		\begin{equation}
			\label{eq:integral_estimate_intro}
			\sup_{1<r<2} (r-1)^{2k-2\alpha - 1}\int_{0}^{2\pi}
			\|R(r e^{i \theta}, T)^k S y\|^2 d\theta \leq M \|y\|^2
		\end{equation}
		for all $y \in Y$.
	\end{enumerate}
\end{theorem}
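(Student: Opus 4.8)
The plan is to reduce the integral in (ii) to an explicit series in the scalars $\|T^nSy\|$, exploiting that $X$ is a Hilbert space, and then to match both conditions against a standard Abelian asymptotic for power series. Since $r(T)\le 1$, for $|\lambda|>1$ the resolvent admits the Neumann expansion $R(\lambda,T)=\sum_{n\ge 0}\lambda^{-(n+1)}T^n$, and differentiating $k-1$ times (equivalently, using the generating identity $(1-x)^{-k}=\sum_{n}\binom{n+k-1}{k-1}x^n$) gives
\[
R(\lambda,T)^k=\sum_{n=0}^{\infty}\binom{n+k-1}{k-1}\lambda^{-(n+k)}T^n .
\]
Writing $\lambda=re^{i\theta}$ and applying this to $Sy$, the map $\theta\mapsto R(re^{i\theta},T)^kSy$ is an $X$-valued trigonometric series whose only nonzero Fourier coefficients are the vectors $\binom{n+k-1}{k-1}r^{-(n+k)}T^nSy$. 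Because $X$ is a Hilbert space, Parseval's identity in $L^2([0,2\pi];X)$ yields the exact formula
\[
\frac{1}{2\pi}\int_{0}^{2\pi}\|R(re^{i\theta},T)^kSy\|^2\,d\theta=\sum_{n=0}^{\infty}\binom{n+k-1}{k-1}^2 r^{-2(n+k)}\|T^nSy\|^2 .
\]
This identity, which is precisely where the Hilbert space hypothesis on $X$ is used, turns (ii) into the assertion that $(r-1)^{2k-2\alpha-1}$ times this series is bounded by $M'\|y\|^2$, uniformly in $1<r<2$ and $y\in Y$.

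For (i) $\Rightarrow$ (ii), I would insert the bound $\|T^nSy\|^2\le C(1+n)^{-2\alpha}\|y\|^2$, valid for all $n\ge 0$ after absorbing the finitely many small $n$ via power-boundedness, and use $\binom{n+k-1}{k-1}^2\lesssim(1+n)^{2k-2}$ to dominate the series by $\|y\|^2\sum_n (1+n)^{2k-2-2\alpha}\rho^n$ with $\rho=r^{-2}$. The classical Abelian estimate $\sum_n (1+n)^{\beta}\rho^n\sim\Gamma(\beta+1)(-\log\rho)^{-(\beta+1)}$ as $\rho\uparrow 1$ applies exactly when $\beta=2k-2-2\alpha>-1$, i.e.\ when $k>\alpha+1/2$; combined with $-\log\rho=2\log r\sim 2(r-1)$, this shows the series is $O\bigl((r-1)^{-(2k-2\alpha-1)}\bigr)$, so the prefactor $(r-1)^{2k-2\alpha-1}$ cancels it. The standing hypothesis $k>\alpha+1/2$ serves precisely to guarantee convergence here.

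For the converse (ii) $\Rightarrow$ (i), which is the crux, the difficulty is that (ii) only controls a weighted \emph{average} of the $\|T^nSy\|^2$, whereas (i) is a \emph{pointwise} statement about a single term. I would specialize to $r=1+1/N$, so the prefactor is $N^{-(2k-2\alpha-1)}$ while $r^{-2(n+k)}$ stays bounded below by a positive constant for $n\le N$ and large $N$; restricting the series to $N/2\le n\le N$ and using $\binom{n+k-1}{k-1}^2\gtrsim N^{2k-2}$ on that block yields $\sum_{N/2\le n\le N}\|T^nSy\|^2\lesssim N^{1-2\alpha}\|y\|^2$. The remaining obstacle is to pass from this sum over a block of length $\sim N/2$ to the single term $\|T^NSy\|$, and this is where power-boundedness is decisive: since $\|T^NSy\|=\|T^{N-n}(T^nSy)\|\le K\|T^nSy\|$ for every $n\le N$, each term of the block dominates $\|T^NSy\|^2/K^2$, so $\tfrac{N}{2K^2}\|T^NSy\|^2\lesssim N^{1-2\alpha}\|y\|^2$ and hence $\|T^NSy\|\lesssim N^{-\alpha}\|y\|$ uniformly in $y$. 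This quasi-monotonicity of $n\mapsto\|T^nSy\|$ furnished by power-boundedness is the key device that converts the averaged resolvent bound into the decay rate, and I expect it to be the main technical point of the argument.
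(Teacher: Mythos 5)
Your proposal is correct. The forward implication (i) $\Rightarrow$ (ii) is essentially the paper's argument: Parseval's identity in the Hilbert space $X$ converts the integral into the weighted series $\sum_n \binom{n+k-1}{k-1}^2 r^{-2(n+k)}\|T^nSy\|^2$, and the Abelian estimate with exponent $\beta=2k-2-2\alpha>-1$ (this is exactly the paper's Proposition \ref{prop:cn_bound} specialized to $f(t)=t^{2\alpha}$) gives the bound. The converse, however, is where you genuinely diverge. The paper proves (ii) $\Rightarrow$ (i) by representing $\binom{n+k}{k}T^n$ as a contour integral of $R(re^{i\theta},T)^{k+1}$ (Lemma \ref{lem:T_power_rep}), splitting one resolvent factor onto the test vector as $R(re^{-i\theta},T^*)x$, applying Cauchy--Schwarz, and controlling the adjoint integral via the discrete Gomilko--Shi--Feng characterization of power-boundedness (Theorem \ref{thm:GSF}); it then optimizes over $r=(1+1/n)^{1/2}$. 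You instead reuse the Parseval identity to read (ii) as a uniform weighted $\ell^2$ bound on the orbit norms, take $r=1+1/N$ so that the weights are comparable to $N^{2k-2}$ on the block $N/2\le n\le N$, deduce $\sum_{N/2\le n\le N}\|T^nSy\|^2\lesssim N^{1-2\alpha}\|y\|^2$, and then use the quasi-monotonicity $\|T^NSy\|\le K\|T^nSy\|$ coming from power-boundedness to convert the block average into the pointwise bound. Your route is more elementary and self-contained: it avoids Theorem \ref{thm:GSF} and any estimate on the adjoint, and it isolates cleanly the single place where power-boundedness is needed. What the paper's duality argument buys in exchange is a statement (Lemma \ref{lem:int_cond_to_decay}) valid for an arbitrary weight function $F$ without any comparison of $F(1/N)$ with $F(2/N)$, together with machinery that is reused verbatim in the perturbation analysis of Section \ref{sec:perturbation}; your block argument would also extend to the $\alpha$-bounded regularly varying setting, but only after invoking the doubling property $f(2t)\le 2^{\alpha}f(t)$ of that class.
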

The integral estimate~\eqref{eq:integral_estimate_intro}
is rooted in the characterization of bounded $C_0$-semigroups
established in \cite{Gomilko1999} and \cite{Shi2000}; see also \cite{Cojuhari2008} and \cite[Theorem~II.1.12]{Eisner2010}
for its discrete analogue.
Theorem~\ref{thm:intro2} can be viewed as 
an extension of the case $k=1$ obtained in \cite{Wakaiki2023ESAIM}, which considered 
the norms $\|T^n Sy\|$ of 
individual orbits rather than the operator norm $\|T^nS\|$ as in statement (i).

We apply
the second characterization
to robustness analysis for semigroup stability.
The robustness problem
we study is stated as follows: Assume that a power-bounded operator $T$
on a Hilbert space satisfies $\|T^nS\| = O(1/f(n))$
as $n \to \infty$ for some $\alpha$-bounded regularly
varying function $f$.
Given a perturbation $D \in \mathcal{L}(X)$,
does the perturbed operator $T+D$ remain power-bounded
and satisfy $\|T^nS\| = O(1/f(n))$
as $n \to \infty$?
For $C_0$-semigroups, several conditions for preservation of
non-exponential stability were developed; see
\cite{Paunonen2011,Paunonen2012SS,Paunonen2013SS,Rastogi2020,Baidiuk2022}  for polynomial stability and \cite{Paunonen2014JDE,Paunonen2015Springer,Rastogi2020} for
strong stability. 
For discrete operator semigroups, robustness of strong stability
was also explored in \cite{Paunonen2015}.
In Corollary~\ref{coro:robustness},
we present a class of perturbations preserving the
power-boundedness of $T$ and 
the decay property of $\|T^nS\|$.

Finally, we examine the relation between
the decay rate of $\|T^nS\| $ and 
the following summability condition
on orbits weighted by a  function $f\colon \mathbb{N} \to (0,\infty)$: There exist constants $C>0$ and $p \geq 1$ such that 
\begin{equation}
	\label{eq:summability_intro}
	\sum_{n=1}^{\infty} f(n) \|T^n Sy\|^p \leq C \|y\|^p
\end{equation}
for all $y \in Y$.
While it is assumed in
Theorems~\ref{thm:intro1} and \ref{thm:intro2} that $X$ is a Hilbert space, 
here we consider the Banach space setting.
It was shown in \cite{Zabczyk1974} that 
when $X= Y$, $S = I$, and $f(n) \equiv 1$,
the condition \eqref{eq:summability_intro}
implies $r(T)  < 1$; see \cite{Przyluski1988} for a different proof.
This result was strengthen to the case of weak orbits 
$|\phi(T^nx)|^p$  in \cite{Weiss1989IJSS}, where $x \in X$ and $\phi$ is a bounded
linear functional on $X$; see also
\cite{Neerven1995,Gluck2015} for further generalizations.
We show in Proposition~\ref{prop:weighted_sum_decay}
that if 
a power-bounded operator $T$ satisfies
the summability condition
\eqref{eq:summability_intro}, then
\[
\|T^nS\| = O\left(
\frac{1}{F(n)^{1/p}}
\right)
\]
as $n \to \infty$, where
\[
F(n) \coloneqq \sum_{m=1}^n f(m),\quad 
n \in \mathbb{N}.
\]
Moreover, we prove 
that the converse statement is true in the case when
$T$ is a multiplication operator on $L^q$-space with
$1 \leq q \leq p < \infty$,  $S = I-T$, and $f(n) = n^{\alpha}$
for some $\alpha \geq 0$; see
Proposition~\ref{prop:multiplication_summability}. This is established by using
intermediate properties between
Kreiss and Ritt operators
investigated in \cite{Mahillo2024}.
The  summability condition \eqref{eq:summability_intro}
with $f(n) \equiv 1$
can be regarded as a discrete counterpart
to the $C_0$-semigroup case discussed in \cite{Wakaiki2024JMAA,Paunonen2025}.

This paper is structured as follows.
In Section~\ref{sec:preliminaries}, we give preliminaries
on Parseval's equality and $\alpha$-bounded
regularly varying functions.
Section~\ref{sec:resolvent_growth} is devoted 
to characterizing the decay rate of $\|T^nS\|$
in terms of the growth rate of $\|(\lambda I - T)^{-k}S\|$
for some $k \in \mathbb{N}$.
In Section~\ref{sec:integral_estimate},
we present an integral estimate for resolvents
to characterize the rate of decay of  $\|T^nS\|$, and in Section~\ref{sec:perturbation}, this result 
is applied to asymptotic estimates
for perturbed discrete operator semigroups.
Finally, in Section~\ref{sec:summability}, 
we connect the decay rate of $\|T^nS\|$ to the summability condition \eqref{eq:summability_intro}.

\paragraph{Notation}
Let $\mathbb{Z}$ and $\mathbb{N}$ denote
the set of integers and
the set of positive integers, respectively. Define 
$\mathbb{N}_0 \coloneqq \mathbb{N} \cup \{0 \}$.
We write 
\begin{align*}
	\mathbb{T} &\coloneqq \{\lambda \in \mathbb{C}:
	|\lambda| = 1 \}, \\
	\mathbb{D} &\coloneqq \{\lambda \in \mathbb{C}:
	|\lambda| < 1 \}, \\
	\mathbb{E} &\coloneqq \{\lambda \in \mathbb{C}:
	|\lambda| > 1 \}.
\end{align*}
The closure of a subset $\Omega $ of $\mathbb{C}$ is 
denoted by $\overline{\Omega}$.
Given functions $f,g \colon [a,\infty)\to (0,\infty)$ for some $a \geq 0$, 
the notation
$f(t) = O(g(t))$ as $t \to \infty$ means that there exist
constants $M>0$ and $t_0 \geq a$ such that 
$f(t) \leq M g(t)$ for all $t \geq t_0$. Similar notation is employed for other types of asymptotic behavior.
We denote by $\Gamma$ the gamma function.

Throughout this paper, we assume that 
all Banach spaces are complex.
Let $X$ and $Y$ be Banach spaces.
We denote by $\mathcal{L}(X,Y)$ 
the space of bounded linear  operators from $X$ to $Y$, and
write $\mathcal{L}(X) \coloneqq \mathcal{L}(X,X)$.
Let $T \in \mathcal{L}(X)$. We denote the spectrum of $T$ by $\sigma(T)$, the resolvent set of $T$ by $\varrho(T)$, and
the spectral radius of $T$ by $r(T)$.
For $\lambda \in \varrho(T)$, we write 
$R(\lambda,T) \coloneqq (\lambda I - T)^{-1}$.

\section{Preliminaries}
\label{sec:preliminaries}
In this section, we first recall
the vector-valued version of
Parseval's equality.
Next, we introduce the notion of $\alpha$-bounded regularly varying functions
and give some properties of this class of functions.
\subsection{Parseval's equality}
First, we recall the well-known resolvent representation of the power $T^n$ 
for a bounded linear operator $T$.
This representation in the case $k=1$ can be obtained from
the Dunford--Riesz functional calculus, and then the case $k \geq 2$
follows by applying  integration by parts to the case $k=1$; see also \cite[p.~2]{McCarthy1971}.
\begin{lemma}
	\label{lem:T_power_rep}
	Let $X$ be a Banach space, and let $T \in \mathcal{L}(X)$.
	Then
	\[
	\binom{n+k-1}{k-1}
	T^n = 
	\frac{r^{n+k}}{2\pi} \int_0^{2\pi} e^{i \theta (n+k)}
	R(r e^{i \theta},T)^k  d\theta
	\]
	for all $n,k \in \mathbb{N}$ and  $r > r(T)$.
\end{lemma}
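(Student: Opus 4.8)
The plan is to pass to the Cauchy-contour form of the identity and prove it by induction on $k$, using that $\lambda \mapsto R(\lambda, T)$ is operator-valued holomorphic on $\varrho(T)$. Since $r > r(T)$, the circle $|\lambda| = r$ lies in $\varrho(T)$ and encloses $\sigma(T)$, so parametrizing $\lambda = re^{i\theta}$ with $d\lambda = ire^{i\theta}\,d\theta$ converts the claimed identity into the equivalent statement
\[
\frac{1}{2\pi i}\oint_{|\lambda|=r} \lambda^{n+k-1} R(\lambda, T)^k\, d\lambda = \binom{n+k-1}{k-1} T^n, \qquad n, k \in \mathbb{N}.
\]
First I would establish the base case $k = 1$: applying the Dunford--Riesz functional calculus to $f(\lambda) = \lambda^n$ gives $\frac{1}{2\pi i}\oint_{|\lambda|=r}\lambda^n R(\lambda, T)\,d\lambda = T^n$, which is exactly the asserted identity since $\binom{n}{0} = 1$.

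For the inductive step I would exploit that the resolvent satisfies $\frac{d}{d\lambda}R(\lambda, T) = -R(\lambda, T)^2$, and more generally $\frac{d}{d\lambda}R(\lambda, T)^k = -k\,R(\lambda, T)^{k+1}$, so that $R(\lambda, T)^{k+1} = -\frac{1}{k}\frac{d}{d\lambda}R(\lambda, T)^k$. Substituting this into the contour integral for the index $k+1$ and integrating by parts (the boundary terms vanish because the contour is closed) yields
\[
\frac{1}{2\pi i}\oint \lambda^{n+k} R(\lambda, T)^{k+1}\, d\lambda = \frac{n+k}{k}\cdot\frac{1}{2\pi i}\oint \lambda^{n+k-1} R(\lambda, T)^{k}\, d\lambda.
\]
Invoking the induction hypothesis on the right-hand side and using $\frac{n+k}{k}\binom{n+k-1}{k-1} = \binom{n+k}{k}$ then closes the induction. (Equivalently, one could write $R(\lambda, T)^k = \frac{(-1)^{k-1}}{(k-1)!}\frac{d^{k-1}}{d\lambda^{k-1}}R(\lambda, T)$, integrate by parts $k-1$ times in one step, and read off the coefficient from $\frac{d^{k-1}}{d\lambda^{k-1}}\lambda^{n+k-1} = \frac{(n+k-1)!}{n!}\lambda^n$.)

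Finally I would translate the contour identity back to the $\theta$-parametrization to recover the stated form with the prefactor $r^{n+k}/2\pi$ and the phase $e^{i\theta(n+k)}$. The only point requiring care --- and the main (mild) obstacle --- is the justification of the manipulations in the operator-valued setting: that $\lambda \mapsto R(\lambda, T)$ is analytic as an $\mathcal{L}(X)$-valued function with the stated derivatives, and that integration by parts is valid for such Bochner integrals with vanishing boundary contributions on the closed contour. These are standard consequences of the holomorphy of the resolvent on $\varrho(T)$, and the hypothesis $r > r(T)$ guarantees that the entire contour together with a neighborhood of it lies in $\varrho(T)$, so no singularity is crossed.
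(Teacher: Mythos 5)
Your proof is correct and follows essentially the same route the paper indicates: the base case $k=1$ via the Dunford--Riesz functional calculus applied to $f(\lambda)=\lambda^n$, followed by integration by parts on the closed contour using $\frac{d}{d\lambda}R(\lambda,T)^k=-kR(\lambda,T)^{k+1}$ to reach higher powers of the resolvent, with the combinatorial identity $\frac{n+k}{k}\binom{n+k-1}{k-1}=\binom{n+k}{k}$ closing the induction. The passage between the contour form and the $\theta$-parametrization and the justification of the operator-valued manipulations are handled correctly, so nothing further is needed.
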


Let $Y$ be a Hilbert space. If $f \in L^2((0,2\pi);Y )$, then
the following Parseval's equality holds:
\[
\frac{1}{2\pi}
\int^{2\pi}_0 \|f(\theta)\|^2 d\theta = 
\sum_{n=-\infty}^{\infty} \|c_n\|^2,
\]
where
\[
c_n := \frac{1}{2\pi}\int^{2\pi}_0 e^{-in\theta} f(\theta) d\theta,\quad 
n \in \mathbb{Z}.
\]
Let $X$ be a Banach space. Let 
$T \in \mathcal{L}(X)$ and $S \in \mathcal{L}(X,Y)$.
Lemma~\ref{lem:T_power_rep} and Cauchy's theorem show that 
for all $k \in \mathbb{N}$,
$r > r(T)$, and $x \in X$, 
\[
\frac{1}{2\pi}\int^{2\pi}_0 e^{-in\theta} SR(re^{-i\theta},T)^kx d\theta
=
\begin{dcases}
	\binom{n-1}{k-1}
	\dfrac{ST^{n-k}x}{r^{n}}, & n \geq k, \\
	0, & n <k.
\end{dcases}
\]
Applying Parseval's equality to $f(\theta) \coloneqq SR(re^{-i\theta},T)^kx $, 
we obtain the following lemma,
which is also called Parseval's equality throughout this paper.
\begin{lemma}
	Let $X$ be a Banach space, and let $Y$ be a Hilbert space.
	Let $T \in \mathcal{L}(X)$ and $S \in \mathcal{L}(X,Y)$.
	For all $k \in \mathbb{N}$, $r > r(T)$, and $x \in X$,
	\[
	\frac{1}{2\pi}
	\int^{2\pi}_0 \|SR(re^{i\theta},T)^k x\|^2 d\theta =
	\sum_{n=0}^{\infty}
	\left\|
	\binom{n+k-1}{k-1}
	\frac{ST^nx}{r^{n+k}} \right\|^2.
	\]
\end{lemma}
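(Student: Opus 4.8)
The plan is to apply the operator-valued Parseval equality recorded just above the statement to a single, carefully chosen $Y$-valued function and then read off the two sides. Fix $k \in \mathbb{N}$, $r > r(T)$, and $x \in X$, and set
\[
f(\theta) \coloneqq SR(re^{-i\theta}, T)^k x, \qquad \theta \in [0, 2\pi].
\]
Since $r > r(T)$, the circle $\{\lambda \in \mathbb{C} : |\lambda| = r\}$ lies in $\varrho(T)$, so $\theta \mapsto R(re^{-i\theta}, T)^k$ is norm-continuous; as $S$ is bounded, $f$ is continuous on $[0, 2\pi]$ and hence belongs to $L^2((0,2\pi); Y)$, so Parseval's equality applies to $f$. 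I would also note that replacing $-i\theta$ by $+i\theta$ leaves $\int_0^{2\pi}\|f(\theta)\|^2\,d\theta$ unchanged, by the substitution $\theta \mapsto -\theta$ together with $2\pi$-periodicity; this is what reconciles the sign appearing inside the integral in the statement with the convenient choice of $f$ above.

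First I would compute the Fourier coefficients $c_n = \frac{1}{2\pi}\int_0^{2\pi} e^{-in\theta} f(\theta)\,d\theta$. This is precisely the quantity evaluated in the paragraph preceding the lemma: combining the resolvent-power representation of Lemma~\ref{lem:T_power_rep} with Cauchy's theorem gives
\[
c_n = \begin{dcases} \binom{n-1}{k-1}\dfrac{ST^{n-k}x}{r^n}, & n \geq k, \\ 0, & n < k. \end{dcases}
\]
Equivalently, one may expand $R(re^{-i\theta}, T)^k = \sum_{m \geq 0}\binom{m+k-1}{k-1} r^{-(m+k)} e^{i\theta(m+k)} T^m$, the Laurent expansion of the $k$-th resolvent power, which converges uniformly on the circle since $r > r(T)$ forces the Neumann series for $R(\lambda,T)$ to converge in operator norm; applying $S$ on the left and $x$ on the right and identifying the coefficient of $e^{in\theta}$ yields the same formula. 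Either route shows that the only nonzero coefficients occur for $n \geq k$, where they equal $\binom{n-1}{k-1} r^{-n} ST^{n-k}x$.

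It then remains to substitute into Parseval's equality and re-index. I would write
\[
\frac{1}{2\pi}\int_0^{2\pi}\|SR(re^{i\theta}, T)^k x\|^2\,d\theta = \sum_{n=-\infty}^{\infty}\|c_n\|^2 = \sum_{n=k}^{\infty}\left\|\binom{n-1}{k-1}\frac{ST^{n-k}x}{r^n}\right\|^2,
\]
and the change of summation index $n \mapsto n + k$ turns the right-hand side into $\sum_{n=0}^{\infty}\bigl\|\binom{n+k-1}{k-1} r^{-(n+k)} ST^n x\bigr\|^2$, which is exactly the claimed identity.

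The hard part here is only bookkeeping: getting the binomial factor $\binom{n-1}{k-1}$, the power $r^{-n}$, and the index shift $n \mapsto n-k$ simultaneously correct, and, if one takes the series route, justifying the termwise identification of Fourier coefficients through the uniform convergence of the $k$-th resolvent power's Laurent series for $r > r(T)$. Since this computation has already been carried out in the discussion preceding the statement, I expect no genuine obstacle beyond this bookkeeping.
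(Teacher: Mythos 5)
Your proposal is correct and follows essentially the same route as the paper: applying Parseval's equality to $f(\theta)=SR(re^{-i\theta},T)^kx$, computing the Fourier coefficients via the resolvent representation of $T^n$ together with Cauchy's theorem, and re-indexing. The additional remarks on the sign convention and the uniform convergence of the Laurent expansion are sound but add nothing beyond the paper's own derivation.
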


\subsection{Special class of functions}
\subsubsection{Definition of $\alpha$-bounded regularly varying functions}
A measureable function $f\colon (0,\infty)
\to (0,\infty)$ is called {\em regularly varying of index $\alpha \in \mathbb{R}$} if
\[
\lim_{t \to \infty} \frac{f(\lambda t)}{f(t)} = \lambda^{\alpha}
\] 
for all $\lambda \geq 1$.
For simplicity, 
let $f\colon (0,\infty)
\to (0,\infty)$ be continuously differentiable.
If
\begin{equation}
	\label{eq:tf_der_f_limit}
	\lim_{t \to \infty} \frac{tf'(t)}{f(t)} = \alpha
\end{equation}
for some $\alpha \in \mathbb{R}$, then $f$ is regularly varying 
of index $\alpha$, and conversely
if $f$ is regularly varying of index $\alpha \in \mathbb{R}$
and $f'$ is non-decreasing or non-increasing, 
then \eqref{eq:tf_der_f_limit} holds; see, e.g., 
\cite[Proposition~0.7]{Resnick1987}.
Motivating this property of regularly varying functions, 
we consider the following variant of regularly varying functions.
\begin{definition}
	Let $\alpha \geq 0$.
	A function 
	$f \colon (0,\infty) \to (0,\infty)$ is called {\em $\alpha$-bounded
		regularly varying}
	if there exists a constant $t_0>0$ such that 
	$f$ is non-decreasing on $[t_0,\infty)$, 
	is absolutely continuous on each compact subinterval
	of $[t_0,\infty)$, and satisfies
	\[
	\frac{tf'(t)}{f(t)} \leq \alpha
	\] 
	for a.e.~$t \geq  t_0$.
\end{definition}

Functions of positive increase introduced in \cite{Rozendaal2019} are related to
$\alpha$-bounded regularly varying functions.
We say that a measurable function $f\colon (0,\infty) \to (0,\infty)$ 
has {\em positive increase} if there exist constants $\alpha >0$, $c\in (0,1]$,
and $t_0 >0$ such that 
\[
\frac{f(\lambda t)}{f(t)} \geq c \lambda^{\alpha}
\]
for all $\lambda \geq 1$ and $t \geq t_0$.
For simplicity, here we again assume that 
$f$ is continuously differentiable on $(0,\infty)$, and 
define 
\begin{equation}
	\label{eq:phi_def}
	\phi(t) \coloneqq \frac{tf'(t)}{f(t)}, \quad t >0.
\end{equation}
Then
\begin{equation}
	\label{eq:reg_rep}
	f(t) = f(1) \exp \left(
	\int^t_{1} \frac{\phi(s)}{s}ds
	\right)
\end{equation}
for all $t >0$.
Therefore, if
\[
\liminf_{t \to \infty } \frac{tf'(t)}{f(t)} =
\liminf_{t \to \infty } \phi(t) >0,
\]
then there exist $\alpha>0$
and $t_0 >0$ such that 
\[
\frac{f(\lambda t)}{f(t)} = \exp \left(
\int^{\lambda t}_{t} \frac{\phi(s)}{s}ds
\right) \geq \lambda^{\alpha}
\]
for all $\lambda \geq 1$ and $t \geq t_0$, and hence $f$
has positive increase.

There exist functions that are 
$\alpha$-bounded
regularly varying but are neither regularly varying nor have positive increase.

\begin{example}
	Let $f\colon (0,\infty) \to (0,\infty)$ be absolutely continuous
	on each compact subinterval of $(0,\infty)$, and 
	let $\phi$ be as in \eqref{eq:phi_def}. Suppose that $\phi$ can be written as 
	\[
	\phi(t) =
	\begin{cases}
		0, & 2 ^{n^2} \leq t <2^{(n+1)^2-1}, \\
		1, & 2^{(n+1)^2-1} \leq t <2^{(n+1)^2}
	\end{cases}
	\]
	for $n \in \mathbb{N}_0$.
	Then $f$ is $1$-bounded regularly varying.
	Let $\lambda \geq 1$ and take $n_0 \in \mathbb{N}$
	such that $2^{2n_0} \geq \lambda$.
	If $t = 2^{n^2}$ for some $n \geq n_0$, then
	$\lambda t \leq 2^{(n+1)^2-1}$ and hence
	\[
	\exp \left(
	\int^{\lambda t}_{t} \frac{\phi(s)}{s}ds
	\right) = 1.
	\]
	This and \eqref{eq:reg_rep} yield
	\begin{equation}
		\label{eq:ex_liminf}
		\liminf_{t\to \infty} \frac{f(\lambda t)}{f(t)} = 1.
	\end{equation}
	From \cite[Lemma~2.1]{Rozendaal2019}, we see that 
	$f$ does not have positive increase.
	On the other hand, if $t = 2^{(n+1)^2-1}$ for some $n \in \mathbb{N}_0$, then for all
	$\lambda \geq 2$,
	\[
	\exp \left(
	\int^{\lambda t}_{t} \frac{\phi(s)}{s}ds
	\right) \geq 2,
	\]
	and hence by \eqref{eq:reg_rep},
	\begin{equation}
		\label{eq:ex_limsup}
		\limsup_{t \to \infty} \frac{f(\lambda t)}{f(t)} \geq 2.
	\end{equation}
	From
	\eqref{eq:ex_liminf} and \eqref{eq:ex_limsup}, we see that
	the limit of $f(\lambda t)/f(t) $ as $t \to \infty$ does not exist for $\lambda \geq 2$. Hence, $f$ is not regularly varying.
\end{example}

\begin{remark}
	Let $a \geq 0$ and let $f\colon (a,\infty) \to (0,\infty)$
	be continuously differentiable such that $f'(t)>0$
	for all $t > a$.
	Then
	\[
	\limsup_{t \to \infty} \frac{tf'(t)}{f(t)} < \infty
	\quad \text{or equivalently}\quad 
	\liminf_{t \to \infty} \frac{f(t)}{tf'(t)} >0
	\]
	appears as a necessary condition for the function $f$
	to be of class $P_1$ in \cite[Lemma~2.2(iv)]{Eisner2021}. This condition was used there in the proof of a subsequential ergodic theorem.
\end{remark}

\subsubsection{Properties of $\alpha$-bounded regularly
	varying functions}
We begin with simple properties of $\alpha$-bounded regularly varying functions.
\begin{lemma}
	\label{lem:basic_property}
	Let 
	$f \colon (0,\infty) \to (0,\infty)$ be $\alpha$-bounded
	regularly varying for some $\alpha \geq 0$.
	Then the following statements hold:
	\begin{enumerate}[label=\upshape\alph*), leftmargin=*, widest=b]
		\item $f(t) = O(t^{\alpha})$ as $t \to \infty$.
		\item Let $\gamma >0$, and define $h(t) \coloneqq f(t)^\gamma$
		for $t >0$. Then $h$ is $(\alpha\gamma)$-bounded
		regularly varying.
	\end{enumerate}
\end{lemma}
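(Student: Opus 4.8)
The plan is to treat the two parts separately, in both cases reducing everything to an integration of the logarithmic derivative of $f$. Fix $t_0>0$ as in the definition, so that on $[t_0,\infty)$ the function $f$ is non-decreasing, is absolutely continuous on each compact subinterval, and satisfies $tf'(t)/f(t)\le\alpha$ for a.e.\ $t\ge t_0$.

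For part a), I would first observe that monotonicity gives $f(t)\ge f(t_0)>0$ for all $t\ge t_0$, so $f$ is bounded away from $0$ on $[t_0,\infty)$. Consequently $\log f$ is absolutely continuous on each compact subinterval of $[t_0,\infty)$, and the fundamental theorem of calculus for absolutely continuous functions yields
\[
\log f(t)-\log f(t_0)=\int_{t_0}^{t}\frac{f'(s)}{f(s)}\,ds
\]
for all $t\ge t_0$. Inserting the defining bound $f'(s)/f(s)\le\alpha/s$ and integrating gives $\log f(t)\le\log f(t_0)+\alpha\log(t/t_0)$, and exponentiating produces $f(t)\le\bigl(f(t_0)/t_0^{\alpha}\bigr)t^{\alpha}$ for all $t\ge t_0$. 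With $M\coloneqq f(t_0)/t_0^{\alpha}$ this is exactly $f(t)=O(t^{\alpha})$ as $t\to\infty$.

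For part b), I would verify the three defining properties for $h=f^{\gamma}$ using the same threshold $t_0$. Monotonicity of $h$ on $[t_0,\infty)$ is immediate, since $x\mapsto x^{\gamma}$ is strictly increasing on $(0,\infty)$ for $\gamma>0$. For absolute continuity on a compact subinterval $[a,b]\subseteq[t_0,\infty)$, I would use that $f$, being non-decreasing, maps $[a,b]$ into the compact set $[f(a),f(b)]\subset(0,\infty)$, on which $x\mapsto x^{\gamma}$ is Lipschitz; the composition of a Lipschitz map with an absolutely continuous map is absolutely continuous, so $h$ is absolutely continuous on $[a,b]$. Finally, at a.e.\ point $t\ge t_0$ at which $f$ is differentiable the chain rule gives $h'(t)=\gamma f(t)^{\gamma-1}f'(t)$, whence
\[
\frac{th'(t)}{h(t)}=\gamma\cdot\frac{tf'(t)}{f(t)}\le\gamma\alpha=\alpha\gamma.
\]
This establishes that $h$ is $(\alpha\gamma)$-bounded regularly varying.

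The only genuinely delicate step is the absolute-continuity and chain-rule argument in part b), and the point that makes it work is the lower bound $f\ge f(t_0)>0$ established along the way in part a). When $\gamma<1$ the outer function $x\mapsto x^{\gamma}$ is only H\"older near $0$ and its derivative blows up there, so one cannot simply invoke that it is globally Lipschitz or $C^1$ on $(0,\infty)$; confining $f$ to a compact subset of $(0,\infty)$ on each bounded interval is precisely what licenses the Lipschitz composition property and the a.e.\ validity of the chain rule. Everything else is a routine manipulation of the logarithmic derivative.
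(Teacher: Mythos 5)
Your proof is correct and follows essentially the same route as the paper: part a) is the same differential inequality, which the paper phrases as ``$g(t)=f(t)/t^{\alpha}$ has non-positive a.e.\ derivative, hence is non-increasing'' while you integrate $(\log f)'\le\alpha/t$ directly, and part b) is the identical logarithmic-derivative computation. Your additional verification in b) that $h$ inherits monotonicity and absolute continuity (using that $f\ge f(t_0)>0$ keeps the range away from the singularity of $x\mapsto x^{\gamma}$ when $\gamma<1$) is a correct filling-in of details the paper leaves implicit.
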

\begin{proof}
	Since $f$ is $\alpha$-bounded
	regularly varying,
	there exists $t_0>0$ 
	such that 
	\[
	\frac{tf'(t)}{f(t)} \leq \alpha
	\] 
	for a.e.~$t \geq t_0$.
	
	a) Define $g(t) \coloneqq f(t)/t^\alpha$ for $t >0$. 
	Then 
	\[
	g'(t) =  \frac{tf'(t) - \alpha f(t)}{t^{\alpha+1}} \leq 0
	\]
	for a.e.~$t \geq t_0$.
	Hence, $g(t) \leq g(t_0)$ for all $t \geq t_0$.
	This implies that 
	$f(t) \leq f(t_0) (t/t_0)^{\alpha}$ for all $t \geq t_0$.
	
	b) Since
	\[
	\frac{th'(t)}{h(t)} = \gamma \frac{tf'(t)}{f(t)} \leq \alpha \gamma
	\]
	for a.e.~$t \geq t_0$, we see that $h$ is $(\alpha\gamma)$-bounded
	regularly varying.
\end{proof}
The following proposition will be used 
to characterize rates of decay for discrete operator semigroups.

\begin{proposition}
	\label{prop:cn_bound}
	Let $\alpha \geq 0$ and $\beta > \alpha - 1$.
	Let 
	$f \colon (0,\infty) \to (0,\infty)$ be  $\alpha$-bounded
	regularly varying.
	If 
	$c \colon \mathbb{N}_0 \to (0,\infty)$ satisfies
	\[
	c(n) = O \left(
	\frac{n^{\beta}}{f(n)}
	\right)
	\]
	as $n \to \infty$, then
	\begin{equation}
		\label{eq:cn_sum_bound}
		\sum_{n=0}^{\infty}
		\frac{c(n)}{r^n} = O 
		\left(
		\frac{1}{(r-1)^{\beta +1}f(1/(r-1))}
		\right)
	\end{equation}
	as $r \downarrow 1$.
\end{proposition}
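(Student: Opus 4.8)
The plan is to reduce \eqref{eq:cn_sum_bound} to a pure power-sum estimate and then reinstate the regularly varying weight $f$ through two-sided (``Potter-type'') bounds. Since $c(n) = O(n^\beta/f(n))$, I would fix $K>0$ and $n_0$ with $c(n) \le K n^\beta/f(n)$ for $n \ge n_0$, let $t_0$ be the threshold from the definition of $\alpha$-bounded regular variation, and set $m_0 := \max\{n_0, \lceil t_0\rceil\}$. Splitting $\sum_{n\ge 0} c(n) r^{-n}$ into the finite head $\sum_{n<m_0} c(n) r^{-n}$ and the tail $\sum_{n\ge m_0} c(n) r^{-n}$, the head is bounded by the constant $\sum_{n<m_0} c(n)$ (as $r>1$). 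I would first note that the target blows up: by part (a) of Lemma~\ref{lem:basic_property}, $f(1/(r-1)) = O((r-1)^{-\alpha})$, so the right-hand side of \eqref{eq:cn_sum_bound} is $\gtrsim (r-1)^{\alpha-\beta-1}\to\infty$ because $\alpha-\beta-1<0$. Hence the constant head is absorbed and only the tail matters.

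For the tail I would integrate the defining inequality $tf'(t)/f(t)\le\alpha$ to obtain, for $t_0\le s\le t$, the sandwich $f(s)\le f(t)\le f(s)(t/s)^\alpha$ (the lower bound being monotonicity). Choosing the reference point $N:=\lfloor 1/(r-1)\rfloor$, which for small $r-1$ satisfies $N\ge m_0$ and $N\le 1/(r-1)< 2N$, these bounds give, for every $n\ge m_0$,
\[
\frac{1}{f(n)} \le \frac{1}{f(N)}\bigl(1 + (N/n)^\alpha\bigr),
\qquad\text{hence}\qquad
\frac{n^\beta}{f(n)} \le \frac{1}{f(N)}\bigl(n^\beta + N^\alpha n^{\beta-\alpha}\bigr),
\]
treating the regimes $n\ge N$ and $m_0\le n<N$ separately. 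The sandwich with $s=N$, $t=1/(r-1)$ also yields $f(N)\asymp f(1/(r-1))$, so $1/f(N) = O(1/f(1/(r-1)))$, while clearly $N^\alpha\le (r-1)^{-\alpha}$.

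It then remains to control the two power sums $\sum_{n\ge1} n^\gamma r^{-n}$ for $\gamma=\beta$ and $\gamma=\beta-\alpha$, both exponents exceeding $-1$ precisely because $\beta>\alpha-1$. Here I would use the generalized binomial series $\sum_{n\ge0}\binom{n+\gamma}{n} x^n = (1-x)^{-\gamma-1}$ with $\binom{n+\gamma}{n}\sim n^\gamma/\Gamma(\gamma+1)$, which furnishes a constant $C_\gamma$ with $n^\gamma\le C_\gamma\binom{n+\gamma}{n}$ for all $n\ge1$; taking $x=1/r$ and using $1-1/r\ge (r-1)/2$ on $1<r<2$ gives $\sum_{n\ge1} n^\gamma r^{-n} = O((r-1)^{-\gamma-1})$. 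Feeding this into the tail estimate,
\[
\sum_{n\ge m_0}\frac{n^\beta}{f(n)}\,r^{-n}
\le \frac{1}{f(N)}\Bigl(\textstyle\sum_{n\ge1} n^\beta r^{-n} + N^\alpha \sum_{n\ge1} n^{\beta-\alpha} r^{-n}\Bigr)
= O\!\left(\frac{(r-1)^{-\beta-1} + (r-1)^{-\alpha}(r-1)^{-(\beta-\alpha)-1}}{f(1/(r-1))}\right),
\]
and since both numerator terms equal $(r-1)^{-\beta-1}$ this is exactly $O\bigl(((r-1)^{\beta+1} f(1/(r-1)))^{-1}\bigr)$, as required. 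The main obstacle is the bookkeeping in the middle step: obtaining two-sided control of $1/f(n)$ against the moving reference $f(N)$ uniformly over the whole range of $n$, so that after multiplying by $N^\alpha\asymp (r-1)^{-\alpha}$ the $\alpha$-dependence cancels and both regimes produce the same order $(r-1)^{-\beta-1}$; by comparison, the power-sum asymptotics and the absorption of the head are routine.
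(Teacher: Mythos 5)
Your argument is correct, and it reaches the conclusion by a genuinely different route than the paper. The paper also splits off a finite head and absorbs it using the fact that the right-hand side of \eqref{eq:cn_sum_bound} blows up, but for the tail it compares the sum with the integral $\int_{n_1}^{\infty}(t+1)^{\beta}r^{-t}f(t)^{-1}\,dt$ and then invokes a separate integral estimate (Lemma~\ref{lem:int_bound}), which is itself proved by splitting the integral at $1/s$ and performing an integration by parts on $1/g(t)$ with $g(t)=f(t)/t^{\beta}$; the condition $\beta>\alpha-1$ enters there as $\delta=\alpha-\beta<1$. You instead stay entirely discrete: you integrate the defining inequality $tf'(t)/f(t)\le\alpha$ once to get the two-sided multiplicative (Potter-type) bound $f(s)\le f(t)\le f(s)(t/s)^{\alpha}$, normalize $1/f(n)$ against the moving reference $f(N)$ with $N=\lfloor 1/(r-1)\rfloor$, and reduce everything to the power sums $\sum_{n\ge1}n^{\gamma}r^{-n}=O((r-1)^{-\gamma-1})$ for $\gamma\in\{\beta,\beta-\alpha\}$, handled via the generalized binomial series. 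Both routes ultimately rest on the same differential inequality, but yours dispenses with the auxiliary integral lemma and with integration by parts, and it makes the role of the hypothesis $\beta>\alpha-1$ transparent (both power-sum exponents exceed $-1$, and the two regimes recombine to the single order $(r-1)^{-\beta-1}$ after the factor $N^{\alpha}\le(r-1)^{-\alpha}$ cancels). What the paper's formulation buys in exchange is a reusable integral estimate in the style of \cite[Lemma~4.2]{Wakaiki2024JMAA}, whose logarithmic variant is then cited in the proof of Proposition~\ref{prop:nlogn_case}; your method would need a separate (though analogous) treatment of that borderline case.
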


The estimate \eqref{eq:cn_sum_bound}
in the case when $f(t) \equiv 1$ and $0 \leq \beta \leq 1$
can be obtained from \cite[Lemma~5.3]{Cohen2016}. 
To derive \eqref{eq:cn_sum_bound} in
the general case, we need 
the following integral estimate. This is an extension of
\cite[Lemma~4.2.a)]{Wakaiki2024JMAA}, which concerned
the case where $\beta=0$ and 
$f(t) = t^p (\log t)^q$
with $0\leq p < 1$ and $q \geq 0$.

\begin{lemma}
	\label{lem:int_bound}
	Let $\alpha \geq 0$ and $\beta > \alpha - 1$.
	If 
	$f \colon (0,\infty) \to (0,\infty)$ is  $\alpha$-bounded
	regularly varying,
	then
	there exist constants $M >0$ and $t_0 >0$ such that
	\[
	\int_{t_0}^{\infty} \frac{t^{\beta}e^{-s t}}{f(t)} dt \leq \frac{M}{s^{\beta+1} f(1/s)}
	\]
	for all $s \in(0, 1/t_0)$.
\end{lemma}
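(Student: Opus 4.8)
The plan is to collapse the estimate to a single scale-invariant integral by the substitution $u = st$, and then to bound the resulting integrand by treating separately the range where the rescaled variable is large (where mere monotonicity of $f$ is enough) and where it is small (where the $\alpha$-bound on the logarithmic derivative is needed). Throughout, $t_0>0$ will be the constant from the definition of $\alpha$-bounded regular variation, so that $f$ is non-decreasing on $[t_0,\infty)$ and satisfies $tf'(t)/f(t)\le\alpha$ for a.e. $t\ge t_0$, and I will always keep $s<1/t_0$, equivalently $\tau:=1/s>t_0$.

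The one preliminary fact I would establish first is a comparison estimate: for $t_0\le a\le b$,
\[
\frac{f(b)}{f(a)}\le\left(\frac{b}{a}\right)^{\alpha}.
\]
This is the integrated form of the hypothesis, in the spirit of the representation \eqref{eq:reg_rep}: since $f$ is positive and absolutely continuous on compact subintervals of $[t_0,\infty)$, so is $\log f$, and
\[
\log f(b)-\log f(a)=\int_a^b\frac{f'(t)}{f(t)}\,dt=\int_a^b\frac{1}{t}\cdot\frac{tf'(t)}{f(t)}\,dt\le\alpha\log\frac{b}{a}.
\]
With this in hand, the substitution $u=st$ turns the target integral into
\[
\int_{t_0}^{\infty}\frac{t^{\beta}e^{-st}}{f(t)}\,dt=\frac{1}{s^{\beta+1}f(1/s)}\int_{st_0}^{\infty}u^{\beta}e^{-u}\,\frac{f(\tau)}{f(u\tau)}\,du,
\]
so the claim reduces to bounding the last integral by a constant $M$ independent of $s$.

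Next I would split that integral at $u=1$ and estimate the ratio $f(\tau)/f(u\tau)$ on each piece. For $u\ge1$ one has $u\tau\ge\tau\ge t_0$, so monotonicity gives $f(u\tau)\ge f(\tau)$, hence $f(\tau)/f(u\tau)\le1$ and the contribution is at most $\int_1^{\infty}u^{\beta}e^{-u}\,du\le\Gamma(\beta+1)$. For $st_0\le u<1$ one has $t_0\le u\tau<\tau$, so the comparison estimate with $a=u\tau$, $b=\tau$ yields $f(\tau)/f(u\tau)\le u^{-\alpha}$, and the contribution is at most $\int_0^1 u^{\beta-\alpha}\,du$.

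The main obstacle, and the precise point where the hypothesis $\beta>\alpha-1$ enters, is the integrability of $u^{\beta-\alpha}$ near $u=0$: the integral $\int_0^1 u^{\beta-\alpha}\,du$ is finite exactly when $\beta-\alpha>-1$. Because the lower endpoint $st_0$ only decreases toward $0$ as $s\downarrow0$, the bound $\int_{st_0}^1 u^{\beta-\alpha}\,du\le 1/(\beta-\alpha+1)$ is uniform in $s$, which is what makes the constant $M$ independent of $s$. Adding the two contributions gives $M=\Gamma(\beta+1)+1/(\beta-\alpha+1)$, completing the argument; the remaining bookkeeping (checking the substitution limits and that $\tau>t_0$ guarantees $u\tau\ge t_0$ on the whole range $u\ge st_0$) is routine.
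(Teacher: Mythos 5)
Your proof is correct, and while it shares the paper's overall skeleton --- both arguments split the integral at the natural scale $t=1/s$ (your $u=1$ after the substitution $u=st$), handle the tail $t\ge 1/s$ by the monotonicity of $f$ alone, and reserve the logarithmic-derivative hypothesis for the range $t_0\le t\le 1/s$ --- the way you exploit that hypothesis on the lower range is genuinely different. The paper drops the exponential and bounds $\int_{t_0}^{1/s}t^{\beta}f(t)^{-1}\,dt=\int_{t_0}^{1/s}g(t)^{-1}\,dt$ with $g(t)=f(t)/t^{\beta}$ by an integration-by-parts argument, using the differential inequality $tg'(t)/g(t)\le\alpha-\beta<1$ to absorb the resulting integral term. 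You instead integrate the hypothesis once and for all into the Potter-type comparison bound $f(b)/f(a)\le(b/a)^{\alpha}$ for $t_0\le a\le b$ (which is in effect the content of the proof of Lemma~\ref{lem:basic_property}.a) with a movable base point), after which the whole estimate collapses to the elementary integrals $\int_1^{\infty}u^{\beta}e^{-u}\,du\le\Gamma(\beta+1)$ and $\int_0^1u^{\beta-\alpha}\,du=1/(\beta-\alpha+1)$; the two routes even produce the identical constant $M=\Gamma(\beta+1)+1/(1-\alpha+\beta)$. Your version is arguably the more transparent one: it isolates exactly where the hypothesis $\beta>\alpha-1$ enters (integrability of $u^{\beta-\alpha}$ at the origin) and avoids the integration by parts entirely, at the modest cost of first checking that $\log f$ is absolutely continuous on compact subintervals of $[t_0,\infty)$, which holds because $f$ is absolutely continuous there and bounded below by $f(t_0)>0$ by monotonicity.
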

\begin{proof}
	Define $g (t) \coloneqq f(t)/t^\beta $ for $t >0$.
	By assumption,
	there exists $t_0 >0$ 
	such that  for a.e.~$t \geq  t_0$,
	\[
	\frac{tf'(t)}{f(t)} \leq \alpha.
	\]
	Hence,
	\begin{equation}
		\label{eq:g_pgc_delta}
		\frac{tg '(t)}{g (t)} = 
		\frac{tf'(t)}{f(t)} - \beta  \leq \alpha - \beta \eqqcolon \delta < 1
	\end{equation}
	for a.e.~$t \geq  t_0$.
	Let $s \in (0,1/t_0)$.
	Since $f$ is non-decreasing, it follows that 
	\begin{align}
		\int_{1/s}^{\infty} \frac{t^{\beta}e^{-s t}}{f(t)} dt 
		&\leq 
		\frac{1}{f(1/s)}
		\int_{1/s}^{\infty} t^{\beta}e^{-s t} dt \notag \\&=
		\frac{1}{s^{\beta+1}f(1/s)}
		\int_{1}^{\infty} t^{\beta}e^{-t} dt \notag \\&\leq 
		\frac{\Gamma(\beta+1)}{s^{\beta+1} f(1/s)}.
		\label{eq:tf_integral_right}
	\end{align}
	We also have
	\begin{equation}
		\label{eq:tf_integral_left1}
		\int^{1/s}_{t_0} \frac{t^{\beta}e^{-s t}}{f(t)} dt \leq 
		\int^{1/s}_{t_0} \frac{1}{g(t)} dt .
	\end{equation}
	Integration by parts yields
	\begin{align*}
		\int^{1/s}_{t_0} \frac{1}{g(t)} dt &=
		\int^{1/s}_{t_0} \frac{1}{t^{\delta}(g(t) /  t^{\delta})} dt \\
		&=
		\frac{1}{1-\delta}
		\left( 
		\frac{1/s}{g(1/s)} -
		\frac{t_0}{g(t_0)}  \right) + 	\frac{1}{1-\delta}
		\int^{1/s}_{t_0}
		\frac{t^{1-\delta}
			(t^{\delta} g'(t) - \delta t^{\delta-1} g(t))
		}{g(t)^2} dt\\
		&\leq 
		\frac{1}{(1-\delta)s g(1/s)} + \frac{1}{1-\delta}
		\int^{1/s}_{t_0}\left(
		\frac{
			tg'(t)
		}{g(t)^2} - 
		\frac{\delta}{g(t)} \right)dt.
	\end{align*}
	Since \eqref{eq:g_pgc_delta} gives
	\[
	\frac{
		tg'(t)
	}{g(t)^2} - 
	\frac{\delta}{g(t)} \leq \frac{\delta}{g(t)} - 	\frac{\delta}{g(t)} = 0
	\]
	for a.e.~$t \geq t_0$,
	it follows that 
	\begin{equation}
		\label{eq:tf_integral_left2}
		\int^{1/s}_{t_0} \frac{1}{g(t)} dt  \leq 	\frac{1}{(1-\delta) s g(1/s)}.
	\end{equation}
	Combining the estimates \eqref{eq:tf_integral_left1} and \eqref{eq:tf_integral_left2},
	we derive
	\begin{equation}
		\label{eq:tf_integral_left3}
		\int_{t_0}^{1/s} \frac{t^{\beta}e^{-s t}}{f(t)} dt  \leq 
		\frac{1}{(1-\delta)s^{\beta+1} f(1/s)}.
	\end{equation}
	By the estimate \eqref{eq:tf_integral_right} and \eqref{eq:tf_integral_left3},
	\[
	\int_{t_0}^{\infty} \frac{t^{\beta}e^{-s t}}{f(t)} dt \leq 
	\frac{M}{s^{\beta+1} f(1/s)},\quad
	\text{where~}M \coloneqq 
	\Gamma(\beta+1) + \frac{1}{1-\delta}.
	\] 
	Note that the constant $M$ does not depend on $s$.
\end{proof}

We are ready to prove Proposition~\ref{prop:cn_bound}.
\begin{proof}[Proof of Proposition~\ref{prop:cn_bound}.]
	By assumption, there exist $M_0>0$ and $n_0 \in \mathbb{N}$
	such that
	\begin{equation}
		\label{eq:cn_bound}
		c(n) \leq \frac{M_0n^{\beta}}{f(n)}
	\end{equation}
	for all $n \geq n_0$.  
	By Lemma~\ref{lem:int_bound},
	there exist $M_1 >0$ and $t_0 >0$ such that
	\begin{equation}
		\label{eq:t1_infty_int_estimate}
		\int_{t_0}^{\infty} \frac{t^{\beta}e^{-s t}}{f(t)} dt \leq \frac{M_1}{s^{\beta+1} f(1/s)} 
	\end{equation}
	for all $s \in (0,1/t_0)$.
	Let $n_1 \in \mathbb{N}$ satisfy $n_1 \geq \max \{ n_0,\,t_0 \}$.
	We have
	\begin{equation}
		\label{eq:cn_sum1}
		\sum_{n=0}^{n_1} 
		\frac{c(n)}{r^n} \leq 
		(n_1+1) \max_{0\leq n \leq n_1} c(n)
	\end{equation}
	for all $r > 1$.
	If $n \leq t \leq n+1$ for some $n \in \mathbb{N}$, then
	\[
	\frac{(n+1)^{\beta}}{r^{n+1}f(n+1) } \leq 
	\frac{(t+1)^{\beta}}{r^{t}f(t) }
	\]
	for all $r > 1$.
	From
	this and the estimate \eqref{eq:cn_bound}, we obtain
	\begin{equation}
		\label{eq:c_r_bound}
		\sum_{n=n_1+1}^{\infty} 
		\frac{c(n)}{r^n}
		\leq  M_0 
		\sum_{n=n_1}^{\infty} 
		\frac{(n+1)^{\beta}}{r^{n+1} f(n+1)} \leq 
		M_0
		\int_{n_1}^{\infty}\frac{(t+1)^{\beta}}{r^t f(t)} dt
	\end{equation}
	for all $r > 1$.
	Moreover, if $0 < \log r < 1/n_1 $, then the estimate~\eqref{eq:t1_infty_int_estimate} 
	gives
	\begin{align}
		\int_{n_1}^{\infty}\frac{(t+1)^{\beta}}{r^tf(t)} dt
		&\leq \left( \frac{n_1+1}{n_1} \right)^{\beta}
		\int_{n_1}^{\infty}\frac{t^{\beta}e^{-(\log r) t}}{f(t)} dt \notag \\
		&\leq M_1\left( \frac{n_1+1}{n_1} \right)^{\beta}
		\frac{1}
		{
			(\log r)^{\beta+1}
			f(1/\log r)
		}.
		\label{eq:int_t_rf}
	\end{align}
	Since 
	\[
	\frac{1}{2} \leq \frac{\log r}{r-1} \leq 1
	\]
	for all $r \in (1,3)$, we deduce from the estimates~\eqref{eq:c_r_bound} and \eqref{eq:int_t_rf} that
	\begin{equation}
		\label{eq:cn_sum2}
		\sum_{n=n_1+1}^{\infty} 
		\frac{c(n)}{r^n} \leq  M_0M_1\left( \frac{n_1+1}{n_1} \right)^{\beta}
		\frac{2^{\beta+1}}{(r - 1)^{\beta+1} f(1/(r- 1))}
	\end{equation}
	whenever $r \in (1,e^{1/n_1})$. 
	By Lemma~\ref{lem:basic_property}.a),
	$f(t) = O(t^{\alpha})$ as $t \to \infty$, and
	by assumption, $\beta+1 > \alpha$.
	Therefore,
	$
	s^{\beta+1}f(1/s)  \to 0
	$
	as $s \downarrow 0$. Thus,
	the estimate \eqref{eq:cn_sum_bound} follows from
	\eqref{eq:cn_sum1} and \eqref{eq:cn_sum2}.
\end{proof}

\section{Relation between 
	semigroup decay and resolvent growth}
\label{sec:resolvent_growth}

In this section, we study the relation between decay rates 
of discrete operator semigroups and growth rates of
resolvents.
First, we state and prove the main theorem of this section.
Next, we give some examples
showing the obtained estimates
cannot in general be improved.
Finally, we investigate the decay rate of $\|T^n(I-T)\|$ as a special case.
\subsection{Main result}
\label{sec:main_result}
The following theorem shows that 
rates of decay of discrete operator semigroups on Hilbert spaces
can be characterized in terms of rates of growth of 
resolvents in
a neighborhood of  $\mathbb{T}$.
\begin{theorem}
	\label{thm:resol_charactrization}
	Let $X$ be a Hilbert space. Let 
	$T \in \mathcal{L}(X)$ be 
	power-bounded, and
	suppose that 
	$S \in \mathcal{L}(X)$ commutes with $T$.
	Let $\alpha > 0$ and $k \in \mathbb{N}$ satisfy
	$k > \alpha$.
	If 
	$f \colon (0,\infty) \to (0,\infty)$ is  $\alpha$-bounded
	regularly varying,
	then the following statements are equivalent: \vspace{3pt}
	\begin{enumerate}[label=\upshape(\roman*), leftmargin=*, widest=ii]
		\item $\displaystyle
		\|T^nS\| = O \left(
		\frac{1}{
			f(n)}
		\right)$ as $n \to \infty$. \vspace{3pt}
		\item 
		There exists a constant $M>0$ such that 
		\begin{equation}
			\label{eq:resol_k_bound}
			\|R(\lambda,T)^{k} S\| \leq 
			\frac{M}{(|\lambda|-1)^{k}f(1/(|\lambda|-1) )}
		\end{equation}
		for all $\lambda \in \mathbb{E}$.
	\end{enumerate}
\end{theorem}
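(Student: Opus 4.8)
The plan is to prove the two implications separately, using three ingredients established in Section~\ref{sec:preliminaries}: the resolvent representation of Lemma~\ref{lem:T_power_rep}, the operator-valued Parseval equality, and Proposition~\ref{prop:cn_bound}, together with the power-boundedness of $T$. Throughout I would use that, since $S$ commutes with $T$, it also commutes with $R(\lambda,T)$ for every $\lambda\in\mathbb{E}$, so that $R(\lambda,T)^kS=SR(\lambda,T)^k$ and $T^nS=ST^n$; in particular the Parseval equality applies with $S$ on either side of the resolvent.

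For (i)$\Rightarrow$(ii), I would expand the resolvent power as the Neumann-type series $R(\lambda,T)^kS=\sum_{n=0}^{\infty}\binom{n+k-1}{k-1}\lambda^{-(n+k)}T^nS$, valid for $\lambda\in\mathbb{E}$. Taking norms gives $\|R(\lambda,T)^kS\|\le\sum_{n=0}^{\infty}c(n)|\lambda|^{-n}$ with $c(n):=\binom{n+k-1}{k-1}\|T^nS\|$. Statement (i) together with $\binom{n+k-1}{k-1}=O(n^{k-1})$ yields $c(n)=O(n^{k-1}/f(n))$, and since $k>\alpha$ I may apply Proposition~\ref{prop:cn_bound} with $\beta=k-1>\alpha-1$ to obtain $\sum_n c(n)r^{-n}=O\big((r-1)^{-k}f(1/(r-1))^{-1}\big)$ as $r\downarrow1$. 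This is exactly the estimate~\eqref{eq:resol_k_bound} for $|\lambda|$ close to $1$; for $|\lambda|$ bounded away from $1$ the bound is routine from power-boundedness, which already gives $\|R(\lambda,T)^kS\|\le C\|S\|(|\lambda|-1)^{-k}$ with $C:=\sup_j\|T^j\|$. So this is the easy direction.

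The substantial direction is (ii)$\Rightarrow$(i). Here I would start from Lemma~\ref{lem:T_power_rep}, multiply by $S$, and write $\binom{n+k-1}{k-1}T^nSx=\frac{r^{n+k}}{2\pi}\int_0^{2\pi}e^{i\theta(n+k)}R(re^{i\theta},T)^kSx\,d\theta$ for a unit vector $x$. Applying the operator-valued Parseval equality identifies $\frac1{2\pi}\int_0^{2\pi}\|R(re^{i\theta},T)^kSx\|^2\,d\theta$ with $\sum_{m=0}^{\infty}\binom{m+k-1}{k-1}^2\|T^mSx\|^2 r^{-2(m+k)}$, and (ii) bounds the integral by $M^2(r-1)^{-2k}f(1/(r-1))^{-2}$. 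Choosing $r=1+1/n$ and restricting the sum to a block $m\asymp n$, I would then invoke the key structural fact coming from power-boundedness: the sequence $(\|T^mSx\|)_m$ is almost non-increasing, since $\|T^mSx\|\le C\|T^nSx\|$ whenever $m\ge n$. This lets one bound a whole block of terms below by $\|T^nSx\|$ and thereby extract a pointwise estimate, uniformly over $\|x\|\le1$; the conversion of the resulting weight into $1/f(n)$ again uses the $\alpha$-bounded regularly varying machinery (Lemma~\ref{lem:basic_property} and Proposition~\ref{prop:cn_bound}).

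I expect the main obstacle to be recovering the \emph{sharp} rate $1/f(n)$ rather than a lossy rate such as $n^{1/2}/f(n)$. Estimating the Parseval integral simply by the pointwise bound in (ii) over-counts the block and loses a power of $n$; removing this loss is precisely where the almost non-increasing property, the choice of radius $r=1+1/n$, and the choice of block scale must be combined, and this is the heart of the argument. A secondary technical point is that $f$ is only $\alpha$-bounded regularly varying rather than a pure power, so all radius and summation optimizations must be routed through the integral estimates of Section~\ref{sec:preliminaries} (Lemma~\ref{lem:int_bound} and Proposition~\ref{prop:cn_bound}); the hypothesis $k>\alpha$ enters exactly to guarantee $\beta=k-1>\alpha-1$ there.
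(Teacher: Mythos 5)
Your direction (i)\,$\Rightarrow$\,(ii) is correct and is exactly the paper's argument (Lemma~\ref{lem:Tn_to_resol}): the series \eqref{eq:Rk_rep}, the bound $c(n)=O(n^{k-1}/f(n))$, Proposition~\ref{prop:cn_bound} with $\beta=k-1>\alpha-1$, and power-boundedness for $|\lambda|$ away from $1$.

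The direction (ii)\,$\Rightarrow$\,(i) has a genuine gap. You correctly diagnose that squaring the pointwise bound in (ii), integrating in $\theta$, applying Parseval, and extracting $\|T^nSx\|$ from a block $m\asymp n$ via the almost-monotonicity $\|T^nSx\|\le K\|T^mSx\|$ ($m\le n$) yields only $\|T^nS\|=O(\sqrt{n}/f(n))$. But the three ingredients you propose to close this gap --- the block scale, the radius $r=1+1/n$, and almost-monotonicity --- are already all present in the lossy computation, and no choice of block scale can recover the missing $\sqrt{n}$: the loss comes from replacing the $L^2(d\theta)$ average of $\|R(re^{i\theta},T)^kSx\|$ by its supremum, not from the block bookkeeping. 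The idea that actually closes the gap in the paper (Lemma~\ref{lem:resol_to_Tn}) is the one place where the commutativity of $S$ and $T$ is used in this direction: one factors $R(\lambda,T)^{k+1}Sx=R(\lambda,T)^kS\,R(\lambda,T)x$, so that (ii) gives $\|R(\lambda,T)^{k+1}Sx\|\le F(|\lambda|-1)\,\|R(\lambda,T)x\|$, and then Parseval applied to $\int_0^{2\pi}\|R(re^{i\theta},T)x\|^2\,d\theta\le 2\pi K^2\|x\|^2/(r^2-1)$ converts the pointwise hypothesis into an $L^2$ bound that beats the squared supremum by a factor $(r^2-1)^{-1}\asymp n$. (The paper then finishes not with a block bound but with a Ces\`aro/duality step, writing $T^nSx=p_k(n)^{-1}\sum_{m=0}^n(m+1)^kT^{n-m}T^mSx$ and applying Cauchy--Schwarz against $(T^*)^{n-m}y$, which uses power-boundedness of $T^*$; after the factorization step either finishing move yields the sharp rate $F(1/n)/n^k=M/f(n)$.) Without the factorization, your outline cannot reach the claimed rate, so as written the proof is incomplete at its acknowledged ``heart.''
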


Recall that 
if $T$ is a power-bounded operator on a Banach space, then
$T$ is a strong Kreiss operator, which 
gives a better decay rate of $\|R(\lambda,T)^kS\|$ as $|\lambda|
\to \infty$ than the estimate \eqref{eq:resol_k_bound};
see \eqref{eq:Kreiss_cond_intro}.
Hence, Theorem~\ref{thm:resol_charactrization} shows that 
the growth rate of $\|R(\lambda,T)^{k} S\|$ in
a neighborhood of  $\mathbb{T}$
characterizes the decay rate of $\|T^nS\|$.

Before proceeding with the proof, we recall a series representation of the powers of resolvents.
Let $T$ be a bounded linear  operator on a Banach space.
If $r(T) \leq 1$, then 	
\begin{equation}
	\label{eq:resol_series}
	R(\lambda,T) = 
	\sum_{n=0}^{\infty}
	\frac{T^n}{\lambda^{n+1}}
\end{equation}
for all $\lambda \in \mathbb{E}$;
see, e.g., \cite[Proposition~9.28.c)]{Batkai2017}.
Hence,
\begin{align}
	\label{eq:Rk_rep}
	R(\lambda,T)^{k} = 
	\frac{R(\lambda,T)^{(k-1)}}{(-1)^{k-1}(k-1)!}
	= 
	\sum_{n=0}^{\infty}
	\binom{n+k-1}{k-1}
	\frac{T^n }{\lambda^{n+k}}
\end{align}
for all $\lambda \in \mathbb{E}$ and $k \in \mathbb{N}$.
Using this representation, we first show that the semigroup estimate
is transferred to the resolvent estimate.
\begin{lemma}
	\label{lem:Tn_to_resol}
	Let $X$, $Y$, and $Z$ be Banach spaces. Let 
	$T\in \mathcal{L}(X)$ satisfy $r(T) \leq 1$, and let 
	$S_1 \in \mathcal{L}(X,Z)$ and $S_2 \in \mathcal{L}(Y,X)$.
	Suppose that $\alpha \geq  0$ and $k \in \mathbb{N}$
	satisfy $k > \alpha$, and 
	let
	$f \colon (0,\infty) \to (0,\infty)$ be  $\alpha$-bounded 
	regularly varying. If
	\begin{equation}
		\label{eq:S1TS2_bound}
		\|S_1T^nS_2\| = O \left(
		\frac{1}{
			f(n)}
		\right)
	\end{equation}
	as $n \to \infty$,
	then
	there exists a constant $M>0$ such that 
	\begin{equation}
		\label{eq:resol_S_bound1}
		\|S_1R(\lambda,T)^{k}S_2\| \leq
		\frac{M}{(|\lambda|-1)^{k}f(1/(|\lambda|-1) )}
	\end{equation}
	for all  $\lambda \in \mathbb{C}$ with $1 < |\lambda| < 2$.
\end{lemma}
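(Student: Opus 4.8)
The plan is to transport the orbit estimate \eqref{eq:S1TS2_bound} into the resolvent estimate \eqref{eq:resol_S_bound1} by feeding the series representation \eqref{eq:Rk_rep} into Proposition~\ref{prop:cn_bound}. First I would note that a Kreiss operator satisfies $r(T) \leq 1$, so \eqref{eq:Rk_rep} is valid on $\mathbb{E}$; sandwiching it between the bounded operators $S_1$ and $S_2$ and taking norms termwise gives, for every $\lambda \in \mathbb{E}$,
\[
\|S_1 R(\lambda,T)^{k}S_2\| \leq \frac{1}{|\lambda|^k}\sum_{n=0}^{\infty} \binom{n+k-1}{k-1}\frac{\|S_1 T^n S_2\|}{|\lambda|^{n}}.
\]
The series converges since $r(T) \le 1$ and the coefficients grow only polynomially, so the interchange of norm and sum is justified.

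Next I would set $c(n) \coloneqq \binom{n+k-1}{k-1}\|S_1 T^n S_2\|$ and check the hypotheses of Proposition~\ref{prop:cn_bound}. The binomial coefficient is a polynomial of degree $k-1$ in $n$, so $\binom{n+k-1}{k-1} = O(n^{k-1})$, and combining this with \eqref{eq:S1TS2_bound} yields $c(n) = O\big(n^{k-1}/f(n)\big)$. Taking $\beta \coloneqq k-1$, the standing assumption $k > \alpha$ is exactly $\beta > \alpha - 1$, so Proposition~\ref{prop:cn_bound} applies and gives
\[
\sum_{n=0}^{\infty}\frac{c(n)}{r^n} = O\left(\frac{1}{(r-1)^{k}\,f\big(1/(r-1)\big)}\right)
\]
as $r \downarrow 1$. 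Writing $r = |\lambda|$ and using $|\lambda|^{-k} \leq 1$ on $\mathbb{E}$, this already delivers \eqref{eq:resol_S_bound1} for all $\lambda$ with $1 < |\lambda| < r_*$, where $r_* > 1$ is the threshold furnished by the proposition.

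It then remains to extend the estimate to $\lambda$ bounded away from $\mathbb{T}$, and this patching is the step I expect to require the most care. For $|\lambda| \geq r_*$ I would dominate the series termwise by its value at $|\lambda| = r_*$, obtaining $\|S_1 R(\lambda,T)^{k}S_2\| \leq C_* |\lambda|^{-k}$ for a constant $C_*$ independent of $\lambda$; here $\sum_{n} c(n) r_*^{-n}$ converges because $c(n)$ grows at most polynomially, $f$ being bounded below by $f(t_0)>0$ on $[t_0,\infty)$. Since $|\lambda|^{-k} \leq (|\lambda|-1)^{-k}$ on $\mathbb{E}$, it suffices to observe that $f\big(1/(|\lambda|-1)\big)$ remains bounded for $|\lambda| \geq r_*$, as $1/(|\lambda|-1) \leq 1/(r_*-1)$ there and $f$ is bounded on $\big(0,1/(r_*-1)\big]$; absorbing all constants into a single $M$ then yields \eqref{eq:resol_S_bound1} on all of $\mathbb{E}$. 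The genuinely delicate point is exactly this uniformity as $|\lambda| \to \infty$: Proposition~\ref{prop:cn_bound} only controls the sum as $r \downarrow 1$, so the far region forces one to control $f$ near $0$ rather than invoke the proposition directly. For the regularly varying functions under consideration this is harmless, but it is where the argument must be carried out by hand.
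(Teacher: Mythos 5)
Your proof is correct and follows essentially the same route as the paper's: expand $S_1R(\lambda,T)^kS_2$ via the series \eqref{eq:Rk_rep}, set $c(n) = \binom{n+k-1}{k-1}\|S_1T^nS_2\| = O(n^{k-1}/f(n))$, and invoke Proposition~\ref{prop:cn_bound} with $\beta = k-1 > \alpha-1$ to cover $1<|\lambda|<r_*$. The only (immaterial) difference is in the region $|\lambda|\geq r_*$, where the paper appeals to the Kreiss condition while you dominate the series directly; both reduce to the same observation, which you rightly flag, that $f$ must be bounded near $0$ for the stated inequality to hold there.
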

\begin{proof}
	It follows from \eqref{eq:Rk_rep} that
	\begin{align*}
		S_1R(\lambda,T)^{k}S_2 = 
		\sum_{n=0}^{\infty}
		\binom{n+k-1}{k-1}
		\frac{S_1T^n S_2}{\lambda^{n+k}}.
	\end{align*}
	From this, we obtain
	\begin{align*}
		\|S_1R(\lambda,T)^{k}S_2\| &\leq 
		\sum_{n=0}^{\infty} 
		\frac{c(n)}{|\lambda|^n}
	\end{align*}
	for all $\lambda \in \mathbb{E}$, where
	\[
	c(n) \coloneqq 
	\binom{n+k-1}{k-1} \,\| S_1T^nS_2 \|,\quad n \in \mathbb{N}_0.
	\]
	The assumption \eqref{eq:S1TS2_bound} yields
	\[
	c(n) = O \left(
	\frac{n^{k-1}}{f(n)}
	\right)
	\]
	as $n \to \infty$.
	By
	Proposition~\ref{prop:cn_bound} with $\beta = k - 1
	> \alpha - 1$,
	there exists $M>0$ such that 
	the desired estimate \eqref{eq:resol_S_bound1} holds
	whenever $1 < |\lambda| < 2$.
\end{proof}

Next, we establish the converse transference. To this end,
we work in the Hilbert space setting for Parseval's equality and use
the commutativity of $S$ and $T$ under the assumption that $X = Y$.
The technique employed here originates from 
\cite[Theorem 4.7]{Batty2016}, which concerned rates of decay for
$C_0$-semigroups.
\begin{lemma}
	\label{lem:resol_to_Tn}
	Let $X$ be a Hilbert space. Let $T \in \mathcal{L}(X)$ be power-bounded, and suppose that
	$S \in \mathcal{L}(X)$ commutes with $T$.
	For a fixed $k \in \mathbb{N}$, if
	$F\colon (0,\infty) \to (0,\infty) $ satisfies
	\begin{equation}
		\label{eq:resol_ineq}
		\| R(\lambda, T)^kS \| \leq F(|\lambda| - 1)
	\end{equation}
	for all $\lambda \in \mathbb{E}$,
	then there exists a constant $M>0$ such that
	\begin{equation}
		\label{eq:T_power_bound}
		\|T^n S\| \leq \frac{M F(1/n)}{n^k}
	\end{equation}
	for all $n \in \mathbb{N}$.
\end{lemma}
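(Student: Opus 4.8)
The plan is to transfer the resolvent bound \eqref{eq:resol_ineq} into orbit decay via Parseval's equality, using that $X$ is a Hilbert space and that $S$ commutes with $T$. First I would fix $x\in X$ and $r>1$. Since $S$ commutes with $T$ we have $R(re^{i\theta},T)^kSx = SR(re^{i\theta},T)^kx$ and $T^nSx = ST^nx$, so applying Parseval's equality (with $Y=X$) to $\theta\mapsto SR(re^{i\theta},T)^kx$ and then invoking \eqref{eq:resol_ineq} gives, for every $r>1$,
\[
\sum_{n=0}^{\infty}\binom{n+k-1}{k-1}^{2}\frac{\|T^nSx\|^{2}}{r^{2(n+k)}}
=\frac{1}{2\pi}\int_{0}^{2\pi}\|R(re^{i\theta},T)^kSx\|^{2}\,d\theta
\le F(r-1)^{2}\|x\|^{2}.
\]
This is the basic bridge from the resolvent to the orbit, and it is the step where the Hilbert-space hypothesis is essential.

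Next I would bring in power-boundedness to control the orbit from below near the index of interest. With $C:=\sup_{n}\|T^n\|<\infty$, for $0\le n\le N$ we have $\|T^NSx\|=\|T^{N-n}T^nSx\|\le C\|T^nSx\|$, hence $\|T^nSx\|\ge C^{-1}\|T^NSx\|$ on $0\le n\le N$. Choosing the matched radius $r=1+1/N$, the factors $r^{-2(n+k)}$ are bounded below by a constant depending only on $k$ for $n\le N$, so retaining only those terms and using $\sum_{n\le N}\binom{n+k-1}{k-1}^{2}\asymp N^{2k-1}$ yields $\|T^NS\|\lesssim F(1/N)/N^{k-1/2}$ after taking the supremum over $\|x\|\le1$. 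It is worth noting that the \emph{target} exponent $k$ is the one predicted by the scalar envelope $\sup_{|t|\le1}|t|^{N}|\lambda-t|^{k}\asymp N^{-k}$ at scale $|\lambda|-1=1/N$ (equivalently, by the diagonalizable case), so \eqref{eq:T_power_bound} asserts that this spectral prediction persists for general power-bounded $T$.

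The main obstacle is precisely closing the gap between the readily obtained exponent $k-1/2$ and the sharp exponent $k$ in \eqref{eq:T_power_bound}. The square-summability extracted above together with almost-monotonicity are, by themselves, consistent with decay only of order $n^{-(k-1/2)}$, and no averaging over radii or widening of the summation window improves this; moreover, for non-normal $T$ the bound $\|T^N(\lambda I-T)^k\|\lesssim N^{-k}$ that would give \eqref{eq:T_power_bound} outright is false, so the polynomial calculus cannot be used directly. The sharp estimate must therefore use the bound \eqref{eq:resol_ineq} at \emph{all} radii simultaneously, i.e. the full analytic structure of $\lambda\mapsto R(\lambda,T)^kSx$ on $\mathbb{E}$ rather than its value on a single circle. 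Following the technique of \cite[Theorem~4.7]{Batty2016}, I would replace the crude lower bound $\|T^nSx\|\ge C^{-1}\|T^NSx\|$ on $[0,N]$ by a square-function argument that pairs the Parseval identity for $T$ against that for the adjoint $T^*$ (using that $S^*$ commutes with $T^*$ and that $T^*$ is power-bounded), thereby recovering the missing half-power; verifying that this coupling indeed produces the weight $N^{2k}$ rather than $N^{2k-1}$ is the delicate point and the crux of the whole proof.
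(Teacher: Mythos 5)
Your set-up is correct as far as it goes: applying Parseval's identity directly to $\theta\mapsto R(re^{i\theta},T)^kSx$ and combining it with almost-monotonicity of $\|T^mSx\|$ does give $\|T^NS\|=O(F(1/N)/N^{k-1/2})$, and you are right that this falls half a power short of \eqref{eq:T_power_bound}. The problem is that you stop exactly at the crux: you announce that a square-function argument ``pairing the Parseval identity for $T$ against that for the adjoint $T^*$'' will recover the missing half power, but you neither carry this out nor is it the mechanism that actually works. The Ces\`aro/duality step --- writing $T^NSx$ as the weighted mean $\bigl(\sum_{m\le N}(m+1)^k\bigr)^{-1}\sum_{m\le N}(m+1)^kT^{N-m}(T^mSx)$ and pairing against $y$ using $\sum_{m\le N}\|(T^*)^{N-m}y\|^2\le K^2(N+1)\|y\|^2$ --- is, on its own, no stronger than almost-monotonicity: fed with your weighted sum $\sum_{m\le N}m^{2(k-1)}\|T^mSx\|^2\lesssim F(1/N)^2\|x\|^2$ it returns exactly the same exponent $k-1/2$. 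So as written the proposal has a genuine gap at the decisive step.

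The idea you are missing is a composition trick that raises the power of the resolvent \emph{before} applying Parseval. Since $S$ commutes with $T$, one has $R(\lambda,T)^{k+1}Sx=R(\lambda,T)^kS\,R(\lambda,T)x$, hence by \eqref{eq:resol_ineq} the pointwise bound $\|R(\lambda,T)^{k+1}Sx\|\le F(|\lambda|-1)\,\|R(\lambda,T)x\|$. Integrating in $\theta$ and using Parseval's equality for the single resolvent of the power-bounded operator $T$ gives
\[
\int_0^{2\pi}\|R(re^{i\theta},T)^{k+1}Sx\|^2\,d\theta\le 2\pi K^2\,\frac{F(r-1)^2}{r^2-1}\,\|x\|^2,
\]
and a second application of Parseval's equality, now to $R^{k+1}S$, yields $\sum_{m\le N}(m+1)^{2k}\|T^mSx\|^2\lesssim N\,F(1/N)^2\|x\|^2$ at $r=1+1/N$. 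The binomial weights have gone up from $m^{2(k-1)}$ to $m^{2k}$ at the cost of only one factor of $N$ on the right-hand side, and now either almost-monotonicity or the Ces\`aro/duality step gives $\|T^NSx\|^2\,N^{2k+1}\lesssim N\,F(1/N)^2\|x\|^2$, i.e.\ the sharp exponent $k$. This is precisely how the paper's proof (following the cited technique of Batty, Chill and Tomilov) closes the gap; the interplay between the operator-norm hypothesis on $R^kS$ and the $L^2$-in-$\theta$ bound on $R(\,\cdot\,)x$ is where the ``full analytic structure'' you allude to actually enters.
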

\begin{proof}
	Let $x \in X$, $k \in \mathbb{N}$, and 
	$K \coloneqq \sup_{n \in \mathbb{N}_0} \|T^n\|$. By the inequality \eqref{eq:resol_ineq} and the commutativity of $S$,
	\[
	\| R(\lambda, T)^{k+1}Sx \| \leq \| R(\lambda, T)^kS \|\, \|R(\lambda,T)x\| \leq F(|\lambda| - 1)\|R(\lambda,T)x\|
	\]
	for all $\lambda \in \mathbb{E}$.
	This and
	Parseval's equality give
	\begin{align*}
		\int_0^{2\pi} \|R(re^{i \theta}, T)^{k+1} Sx\|^2 d\theta &\leq 
		F(r - 1)^2 \int_0^{2\pi} \|R(re^{i \theta},T)x\|^2 d\theta \\
		&\leq 
		2\pi F(r - 1)^2 
		\sum_{n=0}^{\infty} \frac{ \|T^nx\|^2 }{r^{2(n+1)}} \\
		&=
		2\pi K^2  \frac{F(r- 1)^2}{r^2-1}\|x\|^2
	\end{align*}
	for all $r > 1$.
	Using Parseval's equality again, we obtain
	\[
	\sum_{m=0}^{\infty}
	\left\|
	\binom{m+k}{k}
	\frac{T^mSx}{r^{m+k+1}} \right\|^2
	\leq K^2  \frac{F(r- 1)^2}{r^2-1}\|x\|^2
	\]
	for all $r > 1$.
	Hence,
	\begin{align*}
		\sum_{m=0}^{n} (m+1)^{2k}\|T^mS x\|^2 &\leq 
		(k! r^{(n+k+1)})^2 
		\sum_{m = 0}^n	\left\|
		\binom{m+k}{k}
		\frac{T^mSx}{r^{m+k+1}} \right\|^2 \\
		&\leq 
		(k! K r^{(n+k+1)})^2  \frac{F(r- 1)^2}{r^2-1}\|x\|^2
	\end{align*}
	for all $r > 1$ and $n \in \mathbb{N}$.
	If we fix $n \in \mathbb{N}$ and set
	\[
	r = 1+\frac{1}{n},
	\]
	then
	\[
	\sum_{m=0}^{n} (m+1)^{2k}\|T^mS x\|^2 \leq (k! K)^2
	\left(
	1 + \frac{1}{n}
	\right)^{2(n+k+1)} \frac{n^2 F\left(
		1/n
		\right )^2}{2n+1} \|x\|^2.
	\]
	Since
	\[
	\left(
	1 + \frac{1}{n}
	\right)^{2(n+k+1)} \leq 2^{2(k+2)}
	\]
	for all $n \in \mathbb{N}$,
	we have
	\begin{equation}
		\label{kT_kth_power}
		\sum_{m=0}^{n} (m+1)^{2k}\|T^mS x\|^2 \leq M_1 n F\left(
		1/n
		\right )^2 \|x\|^2
	\end{equation}
	for all $n \in \mathbb{N}$, where
	$M_1 \coloneqq (k! K)^2 2^{2k+3}$.
	
	Define 
	\[
	p_k(n) \coloneqq \sum_{m=0}^n (m+1)^k,\quad n\in \mathbb{N}.
	\]
	Let $n \in \mathbb{N}$ and $y \in X$.
	Since
	\[
	T^n Sx = \frac{1}{p_k(n)} \sum_{m=0}^n (m+1)^k T^{n-m}T^m Sx,
	\]
	the Cauchy--Schwarz inequality yields
	\begin{align}
		\label{eq:TnSxy_bound}
		|\langle
		T^nSx, y
		\rangle | 
		&=
		\frac{1}{p_k(n)} 
		\left| 
		\sum_{m=0}^n (m+1)^k
		\langle 
		T^m Sx, (T^*)^{n-m}y
		\rangle
		\right| 
		\\
		&\leq 
		\frac{1}{p_k(n)} 
		\left( \sum_{m=0}^n (m+1)^{2k}
		\|
		T^m Sx
		\|^2\right)^{1/2}
		\left( 
		\sum_{m=0}^n 
		\|
		(T^*)^{n-m}y
		\|^2\right)^{1/2}. \notag 
	\end{align}
	From $\sup_{n \in \mathbb{N}_0} \|(T^*)^n\| = 
	\sup_{n \in \mathbb{N}_0} \|T^n\| = K$,
	we obtain
	\begin{equation}
		\label{eq:y_sum_bound}
		\sum_{m=0}^n 
		\|
		(T^*)^{n-m}y
		\|^2 \leq K^2(n+1) \|y\|^2.
	\end{equation}
	By \eqref{kT_kth_power}--\eqref{eq:y_sum_bound},
	\[
	|\langle
	T^nSx, y
	\rangle|
	\leq  K \sqrt{ M_1 }\, 
	\frac{\sqrt{n(n+1)}F\left(
		1/n
		\right )}{p_k(n)} 
	\|x\|\, \|y\|.
	\]
	Since $p_k$ is a polynomial 
	of degree $k+1$
	with positive leading coefficient,
	the desired inequality \eqref{eq:T_power_bound} holds
	for some constant $M>0$.
\end{proof}

\begin{proof}[Proof of Theorem~\ref{thm:resol_charactrization}.]
	The implication (i) $\Rightarrow $ (ii) follows from
	Lemma~\ref{lem:Tn_to_resol} and the resolvent estimate 
	\eqref{eq:Kreiss_cond_intro} for power-bounded operators.
	Suppose that 
	statement (ii)  hold.
	Define
	\[
	F(s) \coloneqq \frac{M}{s^{k} f(1/s)},\quad s>0.
	\]
	Then
	\[
	\frac{F(1/n)}{n^{k}} = 
	\frac{M}{f(n)}
	\]
	for all $n \in \mathbb{N}$.
	Hence,
	Lemma~\ref{lem:resol_to_Tn} shows that 
	statement (i) holds.
\end{proof}

Lemma~\ref{lem:Tn_to_resol} with $f(n) = n$ yields an estimate 
for $R(\lambda,T)^k$ for $k \geq 2$. 
The following proposition gives 
an estimate for $R(\lambda,T)$ in the case where $f(n) = n (\log n)^{\alpha}$ 
for some $\alpha \geq 0$,
although Lemma~\ref{lem:resol_to_Tn} does not provide
the converse statement.
\begin{proposition}
	\label{prop:nlogn_case}
	Let $X$, $Y$, and $Z$ be Banach spaces, and let $\alpha \geq 0$.
	If $T\in \mathcal{L}(X)$ with $r(T) \leq 1$, 
	$S_1 \in \mathcal{L}(X,Z)$, and $S_2 \in \mathcal{L}(Y,X)$ satisfy
	\[
	\|S_1T^nS_2\| = O \left(
	\frac{1}{n (\log n)^{\alpha}}
	\right)
	\]
	as $n \to \infty$,
	then
	there exists a constant $M>0$ such that 
	\begin{equation}
		\label{eq:nlogn_estimate}
		\|S_1R(\lambda,T)S_2\| \leq
		MH_{\alpha}(|\lambda|-1)
	\end{equation}
	for all  $\lambda \in \mathbb{C}$ with $1< |\lambda| < 4/3$, where
	\begin{equation}
		\label{eq:Ha_def}
		H_{\alpha}(s) \coloneqq
		\begin{cases}
			|\log s|^{1-\alpha}, & 0 \leq \alpha < 1, \\
			\log |\log s|, & \alpha = 1, \\
			1, & \alpha > 1
		\end{cases}
	\end{equation}
	for $0 < s < e^{-1}$. 
\end{proposition}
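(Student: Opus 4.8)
The plan is to pass from the orbit decay to the first-power resolvent bound through the Neumann series \eqref{eq:resol_series}, reducing everything to a scalar sum and then to a scalar integral. Since $r(T)\le 1$ and $|\lambda|>1$, formula \eqref{eq:resol_series} gives $S_1R(\lambda,T)S_2=\sum_{n=0}^{\infty}\lambda^{-(n+1)}S_1T^nS_2$, so
\[
\|S_1R(\lambda,T)S_2\|\le \sum_{n=0}^{\infty}\frac{\|S_1T^nS_2\|}{|\lambda|^{n+1}}.
\]
By hypothesis there are $M_0>0$ and $n_0\in\mathbb{N}$, which I may take as large as convenient (say $n_0\ge 4$, so that $\log t>1$ for $t\ge n_0$ and $e^{1/n_0}<4/3$), with $\|S_1T^nS_2\|\le M_0/(n(\log n)^{\alpha})$ for $n\ge n_0$. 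The finitely many terms with $n<n_0$ contribute a fixed constant $C_0\coloneqq \sum_{n<n_0}\|S_1T^nS_2\|$ (using $|\lambda|>1$), which I will absorb at the end. Note that, unlike Lemma~\ref{lem:resol_to_Tn}, this direction needs neither commutativity nor a Hilbert space; it is the $k=1$ analogue of Lemma~\ref{lem:Tn_to_resol} in the \emph{critical} case $f(n)=n(\log n)^{\alpha}$, which lies exactly on the excluded boundary $\beta=\alpha-1$ of Proposition~\ref{prop:cn_bound} and hence produces a logarithmic rather than a power rate.

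For the main part I set $u\coloneqq \log|\lambda|\in(0,\log(4/3))$, so that $|\lambda|^{-n}=e^{-un}$. Since $t\mapsto e^{-ut}/(t(\log t)^{\alpha})$ is decreasing on $[n_0,\infty)$, I bound the tail sum by $\int_{n_0}^{\infty} e^{-ut}/(t(\log t)^{\alpha})\,dt$ plus one harmless boundary term. For $u<1/n_0$ I split this integral at $t=1/u$. On $[n_0,1/u]$ I drop $e^{-ut}\le 1$ and substitute $v=\log t$, reducing the piece to $\int_{\log n_0}^{|\log u|} v^{-\alpha}\,dv$, whose leading behavior is $|\log u|^{1-\alpha}/(1-\alpha)$ for $\alpha<1$, $\log|\log u|$ for $\alpha=1$, and $O(1)$ for $\alpha>1$ --- that is, $O(H_{\alpha}(u))$ in each regime. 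On $[1/u,\infty)$ I use monotonicity of $1/(t(\log t)^{\alpha})$ to pull out its value at $t=1/u$, leaving $\int_{1/u}^{\infty}e^{-ut}\,dt=e^{-1}/u$; this yields a tail bounded by $e^{-1}|\log u|^{-\alpha}$, which is of strictly smaller order than $H_{\alpha}(u)$ in all three cases. Hence the tail sum is $O(H_{\alpha}(u))$.

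It then remains to replace $u=\log|\lambda|$ by $s\coloneqq|\lambda|-1$ and to absorb the constant parts. From the elementary comparison $\tfrac12(r-1)\le\log r\le r-1$ for $r\in(1,3)$ used earlier, I get $\tfrac12 s\le u\le s$, so $|\log u|$ and $|\log s|$ differ by at most $\log 2$; since $|\log s|\ge\log 3$ on $s\in(0,1/3)$, this additive shift is negligible and gives $H_{\alpha}(u)=O(H_{\alpha}(s))$ uniformly. Moreover $H_{\alpha}$ is bounded below by a positive constant $c_{\alpha}$ on $(0,1/3)$ --- here the restriction $|\lambda|<4/3$, i.e. $s<1/3<e^{-1}$ and thus $|\log s|>1$, is exactly what is needed --- so both $C_0$ and the boundary term can be absorbed into $c_{\alpha}^{-1}(\cdots)H_{\alpha}(s)$. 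For the complementary range $u\ge 1/n_0$, i.e. $|\lambda|$ in the compact set $[e^{1/n_0},4/3)$ bounded away from $1$, the series bound shows $\|S_1R(\lambda,T)S_2\|$ is uniformly bounded while $H_{\alpha}(s)\ge c_{\alpha}$, so \eqref{eq:nlogn_estimate} holds there with a larger constant. Combining the cases yields the claim.

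The routine-but-delicate heart of the argument is the integral estimate: correctly isolating the dominant contribution near $t=1/u$ through the substitution $v=\log t$ and verifying, separately in each of the three $\alpha$-regimes, that the part beyond $1/u$ is genuinely lower order than $H_{\alpha}$. The passage from $\log|\lambda|$ to $|\lambda|-1$ is only harmless because $|\log s|$ stays bounded away from $1$ on $(0,1/3)$, which is precisely why the statement is confined to $|\lambda|<4/3$.
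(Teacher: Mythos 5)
Your proof is correct and follows essentially the same route as the paper: the Neumann series representation of $S_1R(\lambda,T)S_2$, a sum-to-integral comparison using monotonicity of $t\mapsto e^{-ut}/(t(\log t)^{\alpha})$, and the key integral estimate $\int_{t_0}^{\infty} e^{-st}/(t(\log t)^{\alpha})\,dt = O(H_{\alpha}(s))$, followed by the elementary passage from $\log|\lambda|$ to $|\lambda|-1$. The only difference is that you prove the integral bound directly by splitting at $t=1/u$ and substituting $v=\log t$, whereas the paper imports it from an external lemma (Lemma~4.2.b) of the author's earlier JMAA paper) and otherwise refers back to the arguments of Proposition~\ref{prop:cn_bound} and Lemma~\ref{lem:Tn_to_resol}.
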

\begin{proof}
	By \cite[Lemma~4.2.b)]{Wakaiki2024JMAA}, there exist 
	$M>0$ and $t_0 > e$ such that 
	\[
	\int_{t_0}^{\infty} \frac{e^{-st}}{t(\log t)^{\alpha}} dt \leq  M H_{\alpha}(s)
	\]
	for all $s \in (0,1/t_0)$, where $H_{\alpha}$ is defined by 
	\eqref{eq:Ha_def}. The same argument as in the proof of Proposition~\ref{prop:cn_bound} shows that 
	if 	
	$c \colon \mathbb{N}_0 \to (0,\infty)$ satisfies
	\[
	c(n) = O \left(
	\frac{1}{n (\log n)^{\alpha}}
	\right)
	\]
	as $n \to \infty$, then
	\[
	\sum_{n=0}^{\infty}
	\frac{c(n)}{r^n} = O 
	(
	H_{\alpha}(r-1)
	)
	\]
	as $r \downarrow 1$.
	Except that this fact is used instead of Proposition~\ref{prop:cn_bound},
	the rest of the proof proceeds in the same way as the proof of Lemma~\ref{lem:Tn_to_resol},  so we omit the details.
\end{proof}

\subsection{Examples}
\label{sec:examples}
The first example shows that the decay rate of $\|T^nS\|$
and the growth rate of $\|R(\lambda,T)^kS\|$ in
Theorem~\ref{thm:resol_charactrization} cannot in general be 
replaced by better ones in the case of polynomial 
decay and growth.
\begin{example}
	Let $X \coloneqq \ell^2(\mathbb{N})$ and $\alpha >0$.
	Define $T,S_{\alpha} \in \mathcal{L}(X)$ by
	\begin{align*}
		Tx &\coloneqq
		\left(
		\left(
		1 - \frac{1}{j}
		\right) x_{j}
		\right)_{j \in \mathbb{N}}\quad \text{and} \quad 
		S_{\alpha}x \coloneqq
		\left(
		\frac{x_j}{j^{\alpha}}
		\right)_{j \in \mathbb{N}},
	\end{align*}
	where $x = (x_j)_{j \in \mathbb{N}} \in X$.
	Then
	\begin{equation}
		\label{eq:TnS_lower_bound}
		\limsup_{n \to \infty} n^{\alpha}\|T^nS_{\alpha}\| >0
	\end{equation}
	and
	\begin{equation}
		\label{eq:TnS_upper_bound}
		\|T^nS_{\alpha}\| = O \left( \frac{1}{n^{\alpha}}\right) 
	\end{equation}
	as $n \to \infty$. Indeed, since
	\[
	\|T^nS_{\alpha}\| = \sup_{j \in \mathbb{N}} 
	\left|
	\frac{1}{j^{\alpha}}
	\left(
	1 - \frac{1}{j}
	\right)^n
	\right| \geq \frac{1}{n^{\alpha}}
	\left(
	1 - \frac{1}{n}
	\right)^n
	\]
	for all $n \in \mathbb{N}$,
	the estimate \eqref{eq:TnS_lower_bound} holds.
	Define $f_{n,\alpha}(s) \coloneqq s^{\alpha}(1-s)^n$ for $0 < s \leq 1$.
	Since
	\[
	f_{n,\alpha}'(s) = s^{\alpha - 1}(1-s)^{n-1}(\alpha-(n+\alpha)s),
	\]
	we obtain
	\[
	\|T^nS_{\alpha}\| \leq \sup_{0<s\leq 1} f_{n,\alpha}(s) =
	f_{n,\alpha}
	\left(
	\frac{\alpha}{n+\alpha}
	\right)
	=  \left(\frac{\alpha}{n+\alpha} \right)^{\alpha} \left(
	1 - \frac{\alpha}{n+\alpha}
	\right)^n
	\]
	for all $n \in \mathbb{N}$.
	This implies that the estimate \eqref{eq:TnS_upper_bound} holds.

	First, we consider the case $\alpha = 1/2$, and will show that 
	\begin{align}
		\label{eq:a_1_2_lower_bound}
		\limsup_{r \downarrow 1} \sqrt{r-1} \, \|R(r,T)S_{1/2}\| > 0
	\end{align}
	and that there exists a constant $M_1>0$ such that  
	\begin{align}
		\label{eq:a_1_2_upper_bound}
		\|R(\lambda,T)S_{1/2}\| \leq \frac{M_1}{\sqrt{|\lambda| - 1} }
	\end{align}
	for all $\lambda \in \mathbb{E}$.
	From \eqref{eq:TnS_lower_bound}--\eqref{eq:a_1_2_upper_bound},
	we see that 
	the estimates in statements (i) and (ii) of
	Theorem~\ref{thm:resol_charactrization} with $k = 1$
	cannot in general be improved in the polynomial case.
	For all $\lambda \in \mathbb{E}$, we have
	\begin{equation}
		\label{eq:bound_by_g_lam}
		\|R(\lambda,T)S_{1/2}\| = \sup_{j \in \mathbb{N}}
		\left|
		\frac{1}{\sqrt{j} (\lambda - (1-1/j)) }
		\right| =
		\sup_{j \in \mathbb{N}}
		\frac{1 }{|g_{\lambda}(j)|},
	\end{equation}
	where
	\[
	g_\lambda(s) \coloneqq \frac{\lambda s - s+1}{\sqrt{s}},\quad s \geq 1.
	\]
	If $\lambda = r\in (1,2)$, then
	\[
	\inf_{s \geq 1}|g_r(s)|  =
	g_r\left(
	\frac{1}{r-1}
	\right) = 2 \sqrt{r-1}.
	\]
	For $j \in \mathbb{N}$,
	define $r_j > 1$ by
	\[
	r_j \coloneqq \frac{j+1}{j}.
	\]
	Then 
	\[
	\frac{1}{r_j-1} = j\quad 
	\text{and} \quad 
	\lim_{j \to \infty} r_j = 1.
	\]
	Hence,
	the estimate \eqref{eq:a_1_2_lower_bound} holds.

	To prove the estimate \eqref{eq:a_1_2_upper_bound},
	let $\lambda = re^{i\theta}$ for some 
	$r \in (1,2)$ and $\theta \in [-\pi,\pi)$.
	Then
	\begin{equation}
		\label{eq:def_h_rt}
		|g_\lambda(s)|^2 = 
		\frac{(r^2-2 r\cos \theta + 1)s^2 + 2(r \cos \theta - 1)s + 1}{s} \eqqcolon
		h_{r,\theta}(s)
	\end{equation}
	for all $s \geq 1$.
	The derivative $h_{r,\theta}'$
	is given by
	\[
	h_{r,\theta}'(s) = \frac{(r^2 - 2r \cos \theta + 1)s^2 - 1}{s^2},
	\]
	and therefore we define
	\[
	s_{r,\theta} \coloneqq \frac{1}{\sqrt{r^2-2r\cos\theta + 1} }.
	\]
	Then, for all $s \geq 1$,
	\begin{equation}
		\label{eq:h_r_theta_lower_bound}
		h_{r,\theta}(s) \geq 
		\begin{cases}
			h_{r,\theta}(s_{r,\theta}) = \phi_r(\cos \theta),&
			r \leq 2 \cos \theta,
			\\
			h_{r,\theta}(1) = r^2, & r > 2 \cos \theta,
		\end{cases}
	\end{equation}
	where
	\[
	\phi_r(w) \coloneqq 
	2\sqrt{r^2-2rw + 1} + 2(r w - 1)
	\]
	for $w \in [r/2, 1]$. Since
	$\phi_r$ monotonically decreases on $[r/2,1]$, we obtain
	\begin{equation}
		\label{eq:phi_r_lower_bound}
		\phi_r(w) \geq \phi_r(1) = 4(r-1)
	\end{equation}
	for all $w \in [r/2 , 1]$.
	The estimates \eqref{eq:h_r_theta_lower_bound}
	and \eqref{eq:phi_r_lower_bound} yield
	\[
	h_{r, \theta}(s) \geq 
	\min\{
	4(r-1),\, r^2
	\}
	\]
	for all $s \geq 1$.
	Combining this with \eqref{eq:bound_by_g_lam} and \eqref{eq:def_h_rt},
	we see that 
	the estimate \eqref{eq:a_1_2_upper_bound} holds 
	whenever $1 < |\lambda| < 2$.
	The case $|\lambda| \geq 2$
	follows  from the power-boundedness of $T$; see 
	\eqref{eq:Kreiss_cond_intro}.

	Let $k \in \mathbb{N}$.
	In the case $\alpha = k/2$, we have that
	\[
	\|R(\lambda,T)^kS_{k/2}\| = \sup_{j \in \mathbb{N}}
	\left|
	\frac{1}{j^{k/2} (\lambda - (1-1/j))^k }
	\right| = \|R(\lambda,T)S_{1/2}\|^k.
	\]
	Therefore, \eqref{eq:a_1_2_lower_bound}
	and \eqref{eq:a_1_2_upper_bound} yield, respectively,
	\[
	\limsup_{r \downarrow 1}\, (r- 1)^{k/2} \|R(r,T)^kS_{k/2}\| >0
	\]
	and 
	\[
	\|R(\lambda,T)^kS_{k/2}\| \leq \frac{M_k}{(|\lambda| - 1)^{k/2}}
	\]
	for all $\lambda \in \mathbb{E}$ and some constant $M_k >0$.
	Hence, for each $k \in \mathbb{N}$, 
	the estimates in statements (i) and (ii) of
	Theorem~\ref{thm:resol_charactrization}
	cannot in general be improved in the polynomial case.
\end{example}

Let $X$ and $Y$ be Banach spaces.
Let $T\in \mathcal{L}(X)$ be power-bounded, 
and let $S \in \mathcal{L}(X,Y)$. Assume that 
$\|ST^n\| = O(n^{-1})$ as $n \to \infty$.
By Lemma~\ref{lem:Tn_to_resol},
there exists a constant $M_1>0$ such that 
$\|SR(\lambda,T)^2\|\leq M_1/ (|\lambda|-1)$ for all $\lambda \in
\mathbb{E}$.
Moreover, we see from Proposition~\ref{prop:nlogn_case} that
$\|SR(\lambda,T)\|\leq M_2 \log(|\lambda|-1)$ for all $\lambda \in
\mathbb{E}$ and some constant $M_2>0$.
One may ask whether $\sup_{\lambda \in \mathbb{E}}\|SR(\lambda,T)\| < \infty$
holds in the Hilbert space setting.
The second example shows that the answer of this question
is negative.

\begin{example}
	Let $X \coloneqq \ell^2(\mathbb{N})$. Define $T\in \mathcal{L}(X)$ 
	and $S \in \mathcal{L}(X,\mathbb{C})$ by
	\[
	Tx \coloneqq
	\left(
	\left(
	1 - \frac{1}{\sqrt{j}}
	\right) x_j
	\right)_{j \in \mathbb{N}}\quad \text{and} \quad 
	Sx \coloneqq
	\sum_{j=1}^{\infty}
	\frac{x_j}{j},
	\]
	where $x = (x_j)_{j \in \mathbb{N}} \in X$.
	We will prove below that 
	there exist constants $K_1,K_2,M_1,M_2 >0$  such that 
	\begin{equation}
		\label{eq:STn_bounds}
		\frac{K_1}{n} \leq \|ST^n\| \leq \frac{K_2}{n}.
	\end{equation}
	for all $n \in \mathbb{N}$ and
	\begin{equation}
		\label{eq:SResol_bounds}
		M_1 \sqrt{|\log (r-1)|} \leq 
		\|SR(r,T)\| \leq M_2 \sqrt{|\log (r-1)|}
	\end{equation}
	for all $r \in (1,2)$.
	These estimates show that 
	$\|ST^n\| = O(n^{-1})$ as $n \to \infty$
	does not in general imply $\sup_{\lambda \in \mathbb{E}}\|SR(\lambda,T)\|< \infty$.
	
	Let $n \in \mathbb{N}$.
	Then
	\[
	\|ST^n\|^2 = 
	\sum_{j=2}^{\infty}
	\left|\frac{1}{j}
	\left(
	1 - \frac{1}{\sqrt{j}}
	\right)^n
	\right|^2.
	\]
	If $j \leq s < j+1$ for some $j  \in \mathbb{N}$
	with $j \geq 2$, 
	then
	\[
	\frac{1}{2(s-1)}
	\left(
	1 - \frac{1}{\sqrt{s-1}}
	\right)^n  \leq 
	\frac{1}{j} \left(
	1 - \frac{1}{\sqrt{j}}
	\right)^n 
	\leq 
	\frac{2}{s}\left(
	1 - \frac{1}{\sqrt{s}}
	\right)^n .
	\]
	Therefore,
	\begin{align}
		\label{eq:STn2_bounds}
		\frac{1}{4}
		\int_1^{\infty}  
		\frac{1}{s^2} 
		\left(
		1 - \frac{1}{\sqrt{s}}
		\right)^{2n}ds \leq 
		\|ST^n\|^2  \leq 
		4\int_2^{\infty}
		\frac{1}{s^2}  
		\left(
		1 - \frac{1}{\sqrt{s}}
		\right)^{2n}ds.
	\end{align}
	In addition, we have
	\begin{align}
		\label{eq:bounds_comp}
		\int_1^{\infty}
		\frac{1}{s^2}  
		\left(
		1 - \frac{1}{\sqrt{s}}
		\right)^{2n}ds 
		= 2
		\int_0^{1}  
		\tau
		\left(
		1 - \tau
		\right)^{2n}d\tau =
		\frac{1}{(n+1)(2n+1)}.
	\end{align}
	By
	\eqref{eq:STn2_bounds} and \eqref{eq:bounds_comp},
	there exist $K_1,K_2>0$ such that 
	the estimate \eqref{eq:STn_bounds} holds for all $n \in \mathbb{N}$.

	Let $r \in (1,2)$. We have
	\[
	\|SR(r,T)\|^2 
	= 
	\sum_{j=1}^{\infty}
	\frac{1}{((r-1)j + \sqrt{j})^2}.
	\]
	If $j \leq s < j+1$ for some $j \in \mathbb{N}$, then
	\[
	\frac{1}{(r-1)s + \sqrt{s} } 
	\leq 
	\frac{1}{(r-1)j + \sqrt{j}} \leq 
	\frac{2}{(r-1)s + \sqrt{s}}.
	\]
	Hence,
	\[
	\int_1^{\infty}
	\frac{1}{((r-1)s+\sqrt{s})^2} ds \leq 
	\|SR(r,T)\|^2  \leq 
	\int_1^{\infty}
	\frac{4}{((r-1)s+\sqrt{s})^2} ds .
	\]
	Since
	\begin{align*}
		\int_1^{\infty}
		\frac{1}{((r-1)s+\sqrt{s})^2} ds &=
		\int_1^{\infty}
		\frac{2}{((r-1)\tau+1)^2\tau} d\tau \\&=
		2 \left(\log r - \log (r-1) - \frac{1}{r} \right),
	\end{align*}
	there exist $M_1,M_2 >0$
	such that the estimate~\eqref{eq:SResol_bounds} holds
	for all $r \in (1,2)$.
\end{example}

Finally, we give an example showing that
the estimate in Proposition~\ref{prop:nlogn_case} cannot in general
improved.
\begin{example}
	Let $X \coloneqq c_0(\mathbb{N})$ and 
	let $T$ be a left shift operator on $X$, i.e., 
	$Tx = (x_{j+1})_{j \in \mathbb{N}}$ for $x = (x_{j})_{j \in \mathbb{N}} \in X$. Then
	$\|T\|\leq 1$ and $\lim_{n \to \infty}
	\|T^n x\| = 0$ for all $x \in X$.
	Let $b = (b_j)_{j \in \mathbb{N}}$ be a non-increasing
	sequence of positive real numbers.
	Define
	$S \in \mathcal{L}(X)$ by
	\[
	Sx \coloneqq (
	b_jx_j
	)_{j \in \mathbb{N}},
	\quad x = (x_j)_{j \in \mathbb{N}} \in X.
	\]
	Since
	\[
	T^nSx = 
	(
	b_{j+n} x_{j+n}
	)_{j \in \mathbb{N}},
	\quad x = (x_j)_{j \in \mathbb{N}} \in X
	\]
	for all $n \in \mathbb{N}$,
	it follows immediately that
	\[
	\|T^nS\| = O \left( b_n \right)
	\]
	as $n \to \infty$. 
	Let $\alpha \geq 0$. When 
	\begin{equation}
		\label{eqLbj_def}
		b_j = \frac{1}{j ( \log (j+1) )^{\alpha}},\quad j \in \mathbb{N},
	\end{equation} 
	there exists $c>0$ such that 
	\begin{equation}
		\label{eq:resol_S_lower_bound_ex}
		\|R(r,T)S \| \geq  cH_{\alpha}(r-1)
	\end{equation}
	for all $r \in (1,4/3)$, where  $H_{\alpha}$ is defined by 
	\eqref{eq:Ha_def}.
	
	To prove \eqref{eq:resol_S_lower_bound_ex}, 
	let $r \in (1,4/3)$ and
	define
	\[
	x^m = (x^m_j)_{j \in \mathbb{N}}
	\coloneqq
	\left( 
	\frac{m}{j+m}
	\right)_{j \in \mathbb{N}}
	\]
	for $m \in \mathbb{N}$.
	Then $x^m \in c_0(\mathbb{N})$ and
	$\|x^m\| \leq 1$ for all $m \in \mathbb{N}$.
	Moreover,
	\begin{equation}
		\label{eq:xjm_lim}
		\lim_{m \to \infty} x_j^m = 1
	\end{equation}
	for all $j \in \mathbb{N}$.
	Since \eqref{eq:resol_series} yields
	\[
	R(r,T)S = \sum_{n=0}^{\infty} \frac{T^n S}{r^{n+1}},
	\]
	the first entry of $R(r,T)S x^m$
	is given by
	\[
	\sum_{n=1}^{\infty} \frac{b_n x^m_n }{r^n
	}.
	\]
	Using \eqref{eq:xjm_lim} and 
	the dominated convergence theorem, we obtain
	\[
	\lim_{m \to \infty}\sum_{n=1}^{\infty} \frac{b_n x^m_n }{r^n } = 
	\sum_{n=1}^{\infty} \frac{b_n}{r^n}.
	\]
	Let $b =(b_j)_{j \in \mathbb{N}}$ be given by \eqref{eqLbj_def}
	for some $\alpha \geq 0$.
	Then
	\[
	\sum_{n=1}^{\infty} \frac{b_n}{r^n } 
	\geq 
	\int_1^{\infty} \frac{e^{-(\log r)t }}{t (\log (t+1))^{\alpha}} dt
	\geq 
	\int_2^{\infty} \frac{e^{-(\log r)t }}{t (\log t)^{\alpha}} dt.
	\]
	Moreover,
	\begin{align*}
		\int_2^{\infty} \frac{e^{-(\log r)t }}{t (\log t)^{\alpha}} dt
		\geq 
		\int_2^{1/\log r} \frac{e^{-(\log r)t }}{t (\log t)^{\alpha}} dt 
		\geq 
		e^{-1}
		\int_2^{1/\log r} \frac{1}{t (\log t)^{\alpha}} dt.
	\end{align*}
	We have
	\[
	e^{-1}\int_2^{1/\log r} \frac{1}{t (\log t)^{\alpha}} dt \geq c H_{\alpha}(\log r)
	\]
	for some $c >0$ independent of $r$.
	Since $\log r \leq r-1$, we obtain
	$
	H_{\alpha}(\log r) \geq 
	H_{\alpha}(r-1).
	$
	Therefore,
	\[
	\sup_{m \in \mathbb{N}} \|R(r,T)Sx^m\| \geq 
	c H_{\alpha}(r-1).
	\]
	Recalling that $\|x^m\| \leq 1$ for all $m \in \mathbb{N}$, we conclude that  \eqref{eq:resol_S_lower_bound_ex} holds for all 
	$r \in (1,4/3)$.
\end{example}

\subsection{The special case when \texorpdfstring{\(\bm{S=I-T}\)}{S=I-T}}
\label{sec:special_case}
Now we focus on the case when
$S = I-T$. 
In this subsection, we often use the following simple relation:
Let $T$ be a bounded linear operator on a Banach space. Then
\begin{equation}
	\label{eq:resolvent_change}
	R(\lambda,T)(I-T) = 1 - (\lambda-1) R(\lambda,T)
\end{equation}
for all $\lambda \in \varrho(T)$.

We recall the following result on Ritt  operators; see 
\cite{Nagy1999, Lyubich1999} for the proof.
\begin{theorem}
	\label{thm:Ritt_characterization}
	Let $X$ be a Banach space, and let $T \in \mathcal{L}(X)$.
	Then the following statements are equivalent:
	\begin{enumerate}[label=\upshape(\roman*), leftmargin=*, widest=ii]
		\item $T$ is a Ritt operator.
		\item $T$ is power-bounded, and $\|T^n(I-T)\| = O(n^{-1})$
		as $n \to \infty$.
	\end{enumerate}
\end{theorem}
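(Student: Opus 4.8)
The plan is to prove the two implications of Theorem~\ref{thm:Ritt_characterization} separately, the understanding being that the genuine content lies in passing between a resolvent bound on circles $\{|\lambda|=r\}$ and the pointwise rate of $\|T^n(I-T)\|$. The linchpin for both directions is the upgrade of Ritt's bound $\|R(\lambda,T)\|\le M/|\lambda-1|$ on $\mathbb{E}$ to a \emph{sectorial} bound of the same form valid on a Stolz-type region with vertex at $1$. A purely circular estimate loses a logarithmic factor: inserting the Ritt bound into Lemma~\ref{lem:T_power_rep} with $k=2$ to bound $\|T^n(I-T)\|$ produces (via \eqref{eq:resolvent_change}) the integral $\int_0^{2\pi}|re^{i\theta}-1|^{-1}\,d\theta\sim\log\frac{1}{r-1}$, which at $r=1+1/n$ yields only $O(\log n/n)$. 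Deforming the contour into the sector is exactly what recovers the sharp $O(1/n)$ rate.

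For (i)~$\Rightarrow$~(ii), I would first propagate the resolvent bound inward. Expanding $R(\mu,T)=\sum_{j\ge0}(\lambda-\mu)^jR(\lambda,T)^{j+1}$ and using $\|R(\lambda,T)^{j+1}\|\le(M/|\lambda-1|)^{j+1}$, the series converges and stays controlled whenever $|\mu-\lambda|\le\tfrac{1}{2M}|\lambda-1|$; optimizing over $\lambda\in\mathbb{E}$ shows $\varrho(T)$ contains a region $\{\mu\neq1:|\arg(\mu-1)|<\tfrac{\pi}{2}+\delta\}$ for some $\delta\in(0,\pi/2)$, with $\|R(\mu,T)\|\le C/|\mu-1|$ there. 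I would then represent
\[
T^n=\frac{1}{2\pi i}\int_\Gamma\lambda^nR(\lambda,T)\,d\lambda,\qquad
T^n(I-T)=\frac{1}{2\pi i}\int_\Gamma\lambda^n(1-\lambda)R(\lambda,T)\,d\lambda,
\]
taking $\Gamma$ to hug this region: a circular arc of radius $1/n$ about $1$, two rays issuing into $\mathbb{D}$ at angles $\pm(\tfrac{\pi}{2}+\delta)$, closed by an arc inside $\mathbb{D}$. On the arc $|\lambda|^n=O(1)$, $|1-\lambda|=1/n$, $\|R\|\le Cn$; on the rays $|\lambda|<1$ so $|\lambda|^n\le e^{-cns}$ while $|1-\lambda|\,\|R(\lambda,T)\|\le C$; the inner arc is exponentially negligible. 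Integrating yields simultaneously $\sup_n\|T^n\|<\infty$ and $\|T^n(I-T)\|=O(1/n)$.

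For (ii)~$\Rightarrow$~(i), power-boundedness gives $r(T)\le1$, hence $\mathbb{E}\subseteq\varrho(T)$, so it suffices to prove $\|R(\lambda,T)\|\le M/|\lambda-1|$ on $\mathbb{E}$. I would first upgrade the decay to second order, $\|T^n(I-T)^2\|=O(1/n^2)$, by splitting $T^n(I-T)^2=\bigl(T^{\lfloor n/2\rfloor}(I-T)\bigr)\bigl(T^{\lceil n/2\rceil}(I-T)\bigr)$ and applying the hypothesis to each factor. Feeding this into the resolvent series~\eqref{eq:resol_series}, absolute convergence of $\sum n^{-2}$ gives
\[
\bigl\|R(\lambda,T)(I-T)^2\bigr\|=\Bigl\|\sum_{n=0}^\infty\frac{T^n(I-T)^2}{\lambda^{n+1}}\Bigr\|\le C_2
\]
uniformly on $\mathbb{E}$. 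Writing $\lambda I-T=(\lambda-1)I+(I-T)$ produces $(\lambda-1)(I-T)R(\lambda,T)=(I-T)-(I-T)^2R(\lambda,T)$, whence $\|(I-T)R(\lambda,T)\|\le C_3/|\lambda-1|$, and then by~\eqref{eq:resolvent_change}, $\|(\lambda-1)R(\lambda,T)\|\le1+C_3/|\lambda-1|$. For $|\lambda-1|$ bounded below this already gives the Ritt estimate, so only $\lambda$ near $1$ remains.

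The main obstacle is precisely this local estimate near $\lambda=1$ in the direction (ii)~$\Rightarrow$~(i). Sharpening the above by summing the series with $\|T^n(I-T)\|=O(1/n)$ yields only $\|(I-T)R(\lambda,T)\|=O\!\left(\log\frac{1}{|\lambda|-1}\right)$, whereas the Ritt estimate demands this be $O(1)$. The gap cannot be closed by the triangle inequality, since the harmonic sum $\sum_{n\le1/(|\lambda|-1)}n^{-1}$ is genuinely logarithmic; one must exploit cancellation among the operators $T^n(I-T)$ rather than bound their norms termwise. This is the technical heart, carried out in \cite{Nagy1999,Lyubich1999}, and I would follow their refined contour/summation argument, which in effect re-establishes the sectorial resolvent bound and reads off the Ritt estimate. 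Establishing the sectorial bound (in (i)~$\Rightarrow$~(ii)) and removing this logarithm (in (ii)~$\Rightarrow$~(i)) are the two points requiring real work; everything else is routine given the representations recalled above.
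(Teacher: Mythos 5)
The paper does not actually prove this theorem: it is recalled verbatim with the attribution ``see \cite{Nagy1999, Lyubich1999} for the proof,'' so there is no in-paper argument to compare against. Judged on its own terms, your sketch of (i)~$\Rightarrow$~(ii) is sound and is the standard argument: the Neumann-series propagation of the bound $\|R(\lambda,T)\|\le M/|\lambda-1|$ from $\mathbb{E}$ into disks of radius $|\lambda-1|/(2M)$ does yield a sectorial resolvent estimate on $\{\mu\neq 1: |\arg(\mu-1)|<\pi/2+\delta\}$ with $\sin\delta\sim 1/(2M)$, and the keyhole contour then gives both $\sup_n\|T^n\|<\infty$ and $\|T^n(I-T)\|=O(1/n)$ exactly as you describe. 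Your diagnostic observation that the purely circular representation of Lemma~\ref{lem:T_power_rep} with $k=2$ only yields $O(\log n/n)$ is also correct and shows you understand why the deformation is necessary.

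The converse direction, however, is genuinely incomplete as written, and by your own admission. Your reductions are fine up to the point where you need $\|(I-T)R(\lambda,T)\|=O(1)$ uniformly near $\lambda=1$ (equivalently, via \eqref{eq:resolvent_change}, the Ritt bound itself), and you correctly compute that termwise summation of $\|T^n(I-T)\|\le C/n$ in the series \eqref{eq:resol_series} leaves an irreducible factor $\log\frac{1}{|\lambda|-1}$. But ``I would follow their refined contour/summation argument'' is not a proof step; it is a citation, and the entire content of the implication (ii)~$\Rightarrow$~(i) lives in that step. For the record, the mechanism in \cite{Nagy1999} is not a direct cancellation estimate on the series at a single $\lambda$: one first uses the hypotheses to localize $\sigma(T)$ in a Stolz-type region (your second-order estimate $\|T^n(I-T)^2\|=O(n^{-2})$ and the identity $(\lambda-1)(I-T)R(\lambda,T)=(I-T)-(I-T)^2R(\lambda,T)$ are ingredients here), and then recovers the resolvent bound near $1$ from the spectral localization together with power-boundedness, rather than by resumming $\sum_n T^n(I-T)\lambda^{-n-1}$ more cleverly. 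So your proposal should be read as a complete proof of one implication plus an accurate map of where the difficulty in the other lies -- which, to be fair, is no less than the paper itself offers, since it delegates both implications to the same references.
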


We characterize Ritt operators on Hilbert spaces in terms of the powers of resolvents
as a corollary of Theorem~\ref{thm:resol_charactrization}.
\begin{corollary}
	\label{coro:Ritt1}
	Let $X$ be a Hilbert space. Let $T \in \mathcal{L}(X)$
	and $k \in \mathbb{N}$.
	Then the following statements are equivalent:
	\begin{enumerate}[label=\upshape(\roman*), leftmargin=*, widest=ii]
		\item $T$ is a Ritt operator.
		\item $T$ is power-bounded, and there exists 
		a constant $M>0$ such that 
		\begin{equation}
			\label{eq:resolvent_ritt_power}
			\|R(\lambda,T)^{k+1}\| \leq \frac{M}{ |\lambda - 1|(|\lambda|-1)^k}
		\end{equation}
		for all $\lambda \in \mathbb{E}$.
	\end{enumerate}
\end{corollary}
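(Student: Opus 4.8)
The plan is to reduce the statement to Theorem~\ref{thm:resol_charactrization} applied with the commuting operator $S = I-T$ and the function $f(t) = t$, which is $1$-bounded regularly varying since $tf'(t)/f(t) = 1 \leq \alpha = 1$. With this choice one has $k+1 > \alpha = 1$ for every $k \in \mathbb{N}$, statement~(i) of that theorem reads $\|T^n(I-T)\| = O(1/n)$, and statement~(ii) becomes $\|R(\lambda,T)^{k+1}(I-T)\| \leq M/(|\lambda|-1)^k$ for all $\lambda \in \mathbb{E}$, because $f\big(1/(|\lambda|-1)\big) = 1/(|\lambda|-1)$. By Theorem~\ref{thm:Ritt_characterization}, the condition $\|T^n(I-T)\| = O(1/n)$ together with power-boundedness is exactly the assertion that $T$ is a Ritt operator. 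Thus the corollary follows once I translate between the bound on $\|R(\lambda,T)^{k+1}(I-T)\|$ and the bound on $\|R(\lambda,T)^{k+1}\|$ appearing in \eqref{eq:resolvent_ritt_power}.

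The bridge is an algebraic identity coming from \eqref{eq:resolvent_change}. Multiplying $R(\lambda,T)^k$ by $(\lambda-1)R(\lambda,T) = I - R(\lambda,T)(I-T)$ and using that $I-T$ commutes with $R(\lambda,T)$, I obtain
\begin{equation}
	(\lambda-1)R(\lambda,T)^{k+1} = R(\lambda,T)^k - R(\lambda,T)^{k+1}(I-T)
	\label{eq:plan_identity}
\end{equation}
for all $\lambda \in \mathbb{E}$. Alongside this I use the elementary consequence of power-boundedness that $\|R(\lambda,T)^k\| \leq K/(|\lambda|-1)^k$ for all $\lambda \in \mathbb{E}$, where $K \coloneqq \sup_{n \in \mathbb{N}_0} \|T^n\|$; this follows directly from the series \eqref{eq:Rk_rep}.

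For (i) $\Rightarrow$ (ii): a Ritt operator is power-bounded and satisfies $\|T^n(I-T)\| = O(1/n)$ by Theorem~\ref{thm:Ritt_characterization}, so Theorem~\ref{thm:resol_charactrization} gives $\|R(\lambda,T)^{k+1}(I-T)\| \leq M/(|\lambda|-1)^k$. Inserting this together with the power-bounded estimate into \eqref{eq:plan_identity} and taking norms yields $|\lambda-1|\,\|R(\lambda,T)^{k+1}\| \leq (K+M)/(|\lambda|-1)^k$, which is precisely \eqref{eq:resolvent_ritt_power}. For the converse (ii) $\Rightarrow$ (i): rearranging \eqref{eq:plan_identity} as $R(\lambda,T)^{k+1}(I-T) = R(\lambda,T)^k - (\lambda-1)R(\lambda,T)^{k+1}$, I bound the first term by power-boundedness and the second by the hypothesis \eqref{eq:resolvent_ritt_power}, the factor $|\lambda-1|$ cancelling, to obtain $\|R(\lambda,T)^{k+1}(I-T)\| \leq (K+M)/(|\lambda|-1)^k$. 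Theorem~\ref{thm:resol_charactrization} then returns $\|T^n(I-T)\| = O(1/n)$, and since $T$ is assumed power-bounded, Theorem~\ref{thm:Ritt_characterization} identifies $T$ as a Ritt operator.

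I expect no serious obstacle, since the analytic content is carried entirely by the two cited theorems; the only genuine step is recognizing identity \eqref{eq:plan_identity}, which cleanly trades the factor $1/|\lambda-1|$ against the $(I-T)$ factor. The point worth keeping in mind is the reverse triangle inequality $|\lambda-1| \geq |\lambda|-1$, which shows that \eqref{eq:resolvent_ritt_power} is strictly stronger than the bound $\|R(\lambda,T)^{k+1}\| \leq K/(|\lambda|-1)^{k+1}$ available from power-boundedness alone; this is exactly why the commuting Hilbert-space machinery of Theorem~\ref{thm:resol_charactrization}, rather than a crude resolvent estimate, is needed.
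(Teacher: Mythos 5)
Your proof is correct, and your (ii) $\Rightarrow$ (i) direction is essentially identical to the paper's: both bound $\|R(\lambda,T)^{k+1}(I-T)\|$ via the identity \eqref{eq:resolvent_change} together with the Kreiss-type estimate $\|R(\lambda,T)^k\|\leq K/(|\lambda|-1)^k$, and then invoke Theorem~\ref{thm:resol_charactrization} (with $S=I-T$, $f(t)=t$) and Theorem~\ref{thm:Ritt_characterization}. The difference lies in (i) $\Rightarrow$ (ii): the paper obtains \eqref{eq:resolvent_ritt_power} in one line directly from the Ritt resolvent bound, since $\|R(\lambda,T)\|\leq C/|\lambda-1|$ gives $\|R(\lambda,T)^{k+1}\|\leq C^{k+1}/|\lambda-1|^{k+1}$ and $|\lambda-1|\geq |\lambda|-1$ then yields \eqref{eq:resolvent_ritt_power}; no Hilbert-space machinery, power-boundedness, or the identity \eqref{eq:resolvent_change} is needed for that direction. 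You instead route this direction through Theorem~\ref{thm:resol_charactrization} and your rearranged identity, which is valid (the hypotheses $k+1>\alpha=1$ and the commutation of $I-T$ with $T$ are all in place) but longer; the only thing the detour buys is uniformity of method across the two directions, at the cost of obscuring that (i) $\Rightarrow$ (ii) is purely elementary and holds in any Banach space.
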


\begin{proof}
	Suppose that statement~(i) holds. By Theorem~\ref{thm:Ritt_characterization},
	$T$ is power-bounded. The estimate~\eqref{eq:resolvent_ritt_power}
	follows immediately from the definition of Ritt operators and 
	\[
	\frac{1}{|\lambda - 1|^{k+1}} \leq 
	\frac{1}{ |\lambda - 1|(|\lambda|-1)^{k}} 
	\]
	for all $\lambda \in \mathbb{E}$.
	
	Suppose that statement~(ii) holds.
	Since $T$ is a Kreiss operator,
	there exists $M_1 >0$ such that 
	\begin{equation}
		\label{eq:R_k_bound}
		\|
		R(\lambda,T)^k 
		\|\leq 
		\frac{M_1}{(|\lambda| - 1)^k}
	\end{equation}
	for all 
	$\lambda \in \mathbb{E}$.
	From \eqref{eq:resolvent_change}--\eqref{eq:R_k_bound},
	we obtain
	\begin{align*}
		\|
		R(\lambda,T)^{k+1} (I-T)
		\| 
		& \leq 
		\|
		R(\lambda,T)^k
		\| +
		\|
		(\lambda-1) R(\lambda,T)^{k+1}
		\| 
		\\
		&\leq 
		\frac{M_1+M}{(|\lambda| - 1)^k}
	\end{align*}
	for all $\lambda \in \mathbb{E}$.
	Theorem~\ref{thm:resol_charactrization}
	shows that  
	$\|T^n (I-T)\| = O (n^{-1})$ as $n \to \infty$.
	Thus, statement~(i) holds
	by Theorem~\ref{thm:Ritt_characterization}.
\end{proof}

For $c,\delta \geq 1$, define
\begin{equation}
	\label{eq:Stolz_domain}
	S^{\delta}_{c} \coloneqq \left\{
	\lambda \in \mathbb{D} :
	\frac{|1-\lambda|^{\delta}}{1 - |\lambda|} < c
	\right\} \cup \{ 1\},
\end{equation}
Following \cite{Mahillo2024}, we 
call $S_c^{\delta}$ the {\em $\delta$-Stolz domain}.
It is worth noting that 
a closely related concept was previously introduced
in \cite{Badea2017}, where
it was referred to as a {\em generalised Stolz domain with parameter $\delta$}.
The $(3/2)$-Stolz domain is illustrated in \cite[Fig.~3]{Mahillo2024}.
The $1$-Stolz domain coincides with
the Stolz domain studied in \cite{Gomilko2018}.

By \cite[Proposition~2.6]{Cohen2016}, 
if a power-bounded operator $T$ on a Banach space 
satisfies $\|T^n(I-T)\| = O(n^{-\alpha})$ as $n \to \infty$ for some $\alpha \in (1/2,1)$, then
$\sigma(T)$ is contained in a certain quasi-Stolz set.
Here we show that 
a similar implication is true for $\alpha \in (0,1)$,
by using $(1/\alpha)$-Stolz domains instead of quasi-Stolz sets.
A short 
discussion of a comparison between quasi-Stolz sets 
and $\delta$-Stolz domains can be found in
\cite[Remark~2.7]{Mahillo2024}.
\begin{lemma}
	\label{lem:Banach_RK_cond}
	Let $X$ be a Banach space, and let $T \in \mathcal{L}(X)$
	satisfy $r(T) \leq 1$.
	Then the following statements hold:
	\begin{enumerate}[label=\upshape\alph*), leftmargin=*, widest=b]
		\item  If 
		$\|T^n(I-T)\| = O(n^{-\alpha})$
		as $n \to \infty$ for some $\alpha \in [0,1)$,
		there exists a constant $M>0$ such that 
		\begin{equation}
			\label{eq:I-T_resol1}
			\|R(\lambda,T)\| \leq 
			\frac{M}{|\lambda-1|(|\lambda|-1)^{1-\alpha}}
		\end{equation}
		for all $\lambda \in \mathbb{C}$ with $1 < |\lambda| < 2$.
		\item Let $\alpha \in (0,1)$.
		If there exists a constant $M>0$ such that 
		$T$ satisfies
		\eqref{eq:I-T_resol1}  for all $\lambda \in \mathbb{C}$ with $1 < |\lambda| < 2$ and 
		\begin{equation}
			\label{eq:I-T_resol2}
			\|R(\lambda,T)\| \leq \frac{M}{|\lambda| - 1}
		\end{equation}
		for all $\lambda \in \mathbb{E}$, then
		\begin{equation}
			\label{eq:spectrum_inclusion}
			\sigma(T) \subset \overline{S^{1/\alpha}_{c}}
		\end{equation}
		for some constant $c \geq 2$, where 
		$S^{1/\alpha}_{c}$ is the $(1/\alpha)$-Stolz domain defined by \eqref{eq:Stolz_domain}.
	\end{enumerate}
\end{lemma}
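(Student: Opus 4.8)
The plan is to prove the two parts by independent arguments: part~a) comes from the power-series representation of the resolvent together with Proposition~\ref{prop:cn_bound}, while part~b) comes from confronting the standard lower bound $\|R(\lambda,T)\| \ge 1/\mathrm{dist}(\lambda,\sigma(T))$ with the hypothesized upper bound \eqref{eq:I-T_resol1}.

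For part~a), I would start from \eqref{eq:resol_series}, which is available because $r(T)\le 1$, and multiply on the right by the bounded operator $I-T$ to get $R(\lambda,T)(I-T) = \sum_{n=0}^{\infty} T^n(I-T)/\lambda^{n+1}$, whence $\|R(\lambda,T)(I-T)\| \le \sum_{n=0}^{\infty} c(n)/|\lambda|^{n}$ with $c(n) \coloneqq \|T^n(I-T)\|$. The hypothesis reads $c(n) = O(n^{-\alpha}) = O(n^{0}/f(n))$ for the choice $f(t) \coloneqq t^{\alpha}$, which is $\alpha$-bounded regularly varying, so Proposition~\ref{prop:cn_bound} applies with $\beta = 0$ (legitimate since $\alpha\in(0,1)$ forces $0 > \alpha-1$) and yields $\|R(\lambda,T)(I-T)\| = O\big((|\lambda|-1)^{\alpha-1}\big)$ as $|\lambda|\downarrow 1$. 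I would then rewrite \eqref{eq:resolvent_change} as $(\lambda-1)R(\lambda,T) = I - R(\lambda,T)(I-T)$ and use $1 \le (|\lambda|-1)^{-(1-\alpha)}$ on $1<|\lambda|<2$ to absorb the identity term, giving \eqref{eq:I-T_resol1} for $|\lambda|$ near $\mathbb{T}$; on the remaining compact annulus, which is disjoint from $\sigma(T)\subset\overline{\mathbb{D}}$, the map $R(\cdot,T)$ is bounded, so enlarging the constant extends the estimate to all $1<|\lambda|<2$.

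For part~b), fix $\mu\in\sigma(T)$; since $r(T)\le 1$ we have $\mu\in\overline{\mathbb{D}}$. Treat first the case $\rho\coloneqq|\mu|<1$, write $\mu=\rho e^{i\phi}$ and set $d\coloneqq 1-\rho$. The idea is to probe the resolvent at $\lambda\coloneqq(2-\rho)e^{i\phi}$, a point on the same ray as $\mu$ with $1<|\lambda|=2-\rho<2$ and $|\lambda|-1=d$. Then $\mathrm{dist}(\lambda,\sigma(T))\le|\lambda-\mu|=2d$, so $\|R(\lambda,T)\|\ge 1/(2d)$, and comparing with \eqref{eq:I-T_resol1} gives $|\lambda-1|\le 2Md^{\alpha}$. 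Combining with the triangle inequality $|1-\mu|\le|1-\lambda|+|\lambda-\mu|=|1-\lambda|+2d$ and with $d\le d^{\alpha}$ (valid because $d\le 1$ and $\alpha<1$) yields $|1-\mu|\le(2M+2)\,d^{\alpha}$; raising to the power $1/\alpha$ and dividing by $d=1-|\mu|$ gives $|1-\mu|^{1/\alpha}/(1-|\mu|)\le(2M+2)^{1/\alpha}$, so $\mu\in\overline{S^{1/\alpha}_{c}}$ for $c\coloneqq\max\{(2M+2)^{1/\alpha},2\}$, a constant independent of $\mu$.

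It then remains to show $\sigma(T)\cap\mathbb{T}\subseteq\{1\}$, since the closure of the $(1/\alpha)$-Stolz domain \eqref{eq:Stolz_domain} meets $\mathbb{T}$ only at $\lambda=1$. For $\mu\in\mathbb{T}$ with $\mu\ne1$, I would probe along $\lambda=r\mu$ with $r\downarrow1$: the lower bound gives $\|R(\lambda,T)\|\ge 1/|\lambda-\mu|=1/(r-1)$, whereas \eqref{eq:I-T_resol1} gives $\|R(\lambda,T)\|\le M/\big(|r\mu-1|(r-1)^{1-\alpha}\big)$ with $|r\mu-1|\to|1-\mu|>0$; these are incompatible because $(r-1)^{-\alpha}\to\infty$ for $\alpha>0$. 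Hence no such $\mu$ lies in $\sigma(T)$, and \eqref{eq:spectrum_inclusion} follows. The main obstacle is the geometric calibration in part~b): one must choose a test point that stays in the validity window $1<|\lambda|<2$ of \eqref{eq:I-T_resol1}, keeps $\mathrm{dist}(\lambda,\sigma(T))$ comparable to $1-|\mu|$, and linearizes cleanly into the Stolz inequality with a single constant $c$ valid over all of $\sigma(T)$; the choice $\lambda=(2-\rho)e^{i\phi}$ is precisely what reconciles these three demands. In this argument the estimate \eqref{eq:I-T_resol2} is not directly invoked, as all test points can be kept within $1<|\lambda|<2$.
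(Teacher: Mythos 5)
Your proof is correct, and while part~a) follows the paper's route, part~b) is genuinely different. For a) you simply unfold what the paper does via Lemma~\ref{lem:Tn_to_resol}: the series \eqref{eq:resol_series}, Proposition~\ref{prop:cn_bound} with $f(t)=t^{\alpha}$ and $\beta=0$, and then \eqref{eq:resolvent_change} to trade $R(\lambda,T)(I-T)$ for $R(\lambda,T)$ — identical in substance (and your compactness argument on the outer part of the annulus even sidesteps the Kreiss hypothesis that Lemma~\ref{lem:Tn_to_resol} formally carries). For b) the paper takes a one-line route: it combines \eqref{eq:I-T_resol1} and \eqref{eq:I-T_resol2} into the $(1,1-\alpha)$-RK bound $\|R(\lambda,T)\|\leq M_1|\lambda|^{1-\alpha}/\bigl(|\lambda-1|(|\lambda|-1)^{1-\alpha}\bigr)$ on all of $\mathbb{E}$ and then cites \cite[Theorem~2.6]{Mahillo2024} for the spectral localization. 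You instead prove the localization from scratch using the elementary lower bound $\|R(\lambda,T)\|\geq 1/\mathrm{dist}(\lambda,\sigma(T))$ at the radial probe point $\lambda=(2-|\mu|)e^{i\arg\mu}$, which gives $|1-\mu|\leq (2M+2)(1-|\mu|)^{\alpha}$ and hence membership in $\overline{S^{1/\alpha}_c}$ with a uniform $c$, plus a separate blow-up argument showing $\sigma(T)\cap\mathbb{T}\subseteq\{1\}$. This buys self-containedness (no external theorem) and, as you note, shows that \eqref{eq:I-T_resol2} is not actually needed for the conclusion — a mild strengthening of the lemma — at the cost of some geometric bookkeeping that the citation absorbs. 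Two cosmetic points: your probe point has $|\lambda|=2$ exactly when $\mu=0$, which falls outside the open validity window $1<|\lambda|<2$; this is harmless since any $\mu$ with, say, $|\mu|\leq 1/2$ satisfies the Stolz inequality automatically for $c$ large, but it deserves a sentence. And since your estimate yields $|1-\mu|^{1/\alpha}/(1-|\mu|)\leq c$ with a non-strict inequality while \eqref{eq:Stolz_domain} is defined with a strict one, you should either enlarge $c$ or note that $\{\lambda\in\mathbb{D}: |1-\lambda|^{1/\alpha}\leq c(1-|\lambda|)\}\subseteq\overline{S^{1/\alpha}_c}$ (which follows by pushing such $\lambda$ slightly toward $1$).
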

\begin{proof}
	a) 
	By Lemma~\ref{lem:Tn_to_resol}, 
	there exists $M_1>0$ such that 
	\[
	\|R(\lambda,T)(I-T)\| \leq 
	\frac{M_1}{(|\lambda|-1)^{1-\alpha}}
	\]
	for all $\lambda \in \mathbb{C}$ with $1 < |\lambda| < 2$. Combining this with
	\eqref{eq:resolvent_change}, we obtain
	\[
	\|R(\lambda,T)\| \leq \frac{1}{|\lambda- 1|} + \frac{M_1}{|\lambda- 1|(|\lambda|-1)^{1-\alpha}}
	\leq \frac{1+M_1}{|\lambda- 1|(|\lambda|-1)^{1-\alpha}}
	\]
	for all $\lambda \in \mathbb{C}$ with $1 < |\lambda| < 2$.
	
	b)
	By the estimates~\eqref{eq:I-T_resol1} and 
	\eqref{eq:I-T_resol2},
	there exists $M_1 >0$ such that 
	\begin{equation}
		\label{eq:RK_cond}
		\|R(\lambda,T)\| \leq \frac{M_1|\lambda|^{1-\alpha}}{ |\lambda- 1|(|\lambda| - 1)^{1-\alpha}}
	\end{equation}
	for all $\lambda \in \mathbb{E}$.
	Hence, \cite[Theorem~2.6]{Mahillo2024} shows that the desired inclusion
	\eqref{eq:spectrum_inclusion}
	holds.
\end{proof}

If $T$ is a power-bounded operator on a Hilbert space, then
the converse of Lemma~\ref{lem:Banach_RK_cond}.a)
also holds.
\begin{proposition}
	\label{prop:equiv_decay_RK}
	Let $X$ be a Hilbert space, and let $T \in \mathcal{L}(X)$
	be power-bounded.
	Then the following statements are equivalent for a 
	fixed $\alpha \in (0,1)$:
	\begin{enumerate}[label=\upshape(\roman*), leftmargin=*, widest=ii]
		\item $\|T^n(I-T)\| = O(n^{-\alpha})$ as $n \to \infty$.
		\item 		There exists a constant $M>0$ such that 
		\[
		\|R(\lambda,T)\| \leq 
		\frac{M}{|\lambda-1|(|\lambda|-1)^{1-\alpha}}
		\]
		for all $\lambda \in \mathbb{C}$ with $1 < |\lambda| < 2$.
	\end{enumerate}
\end{proposition}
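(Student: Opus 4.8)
The implication (i) $\Rightarrow$ (ii) is already contained in Lemma~\ref{lem:Banach_RK_cond}.a), since a power-bounded operator satisfies $r(T) \leq 1$; thus the only substantive task is to prove (ii) $\Rightarrow$ (i). The plan is to recognize that (i) is exactly the conclusion of Theorem~\ref{thm:resol_charactrization} applied with the commuting operator $S = I - T$, the exponent $k = 1$, and the weight $f(n) = n^{\alpha}$. For this choice $f$ is $\alpha$-bounded regularly varying (indeed $tf'(t)/f(t) \equiv \alpha$), the requirement $k > \alpha$ holds because $\alpha \in (0,1)$, and statement (i) of Theorem~\ref{thm:resol_charactrization} reads $\|T^n(I-T)\| = O(n^{-\alpha})$, which is precisely our (i). Moreover, with $k=1$ and $f(n)=n^\alpha$ one has $f\big(1/(|\lambda|-1)\big) = (|\lambda|-1)^{-\alpha}$, so hypothesis (ii) of Theorem~\ref{thm:resol_charactrization} collapses to the requirement that there exist $M>0$ with
\[
\|R(\lambda,T)(I-T)\| \leq \frac{M}{(|\lambda|-1)^{1-\alpha}}
\]
for all $\lambda \in \mathbb{E}$. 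Hence it suffices to verify this single resolvent estimate.

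To establish it I would split $\mathbb{E}$ into a neighborhood of the unit circle and its complement. On the annulus $1 < |\lambda| < 2$ I would invoke the relation \eqref{eq:resolvent_change}, which gives $\|R(\lambda,T)(I-T)\| \leq 1 + |\lambda - 1|\,\|R(\lambda,T)\|$; the hypothesized bound (ii) of the proposition controls $|\lambda-1|\,\|R(\lambda,T)\|$ by $M/(|\lambda|-1)^{1-\alpha}$, and since $(|\lambda|-1)^{1-\alpha} \leq 1$ there, the additive constant $1$ is absorbed into the same scale. On the region $|\lambda| \geq 2$ I would instead use the series representation \eqref{eq:resol_series}, which, after multiplying by $I-T$, yields $R(\lambda,T)(I-T) = \sum_{n=0}^{\infty} T^n(I-T)/\lambda^{n+1}$; together with power-boundedness and the crude bound $\|T^n(I-T)\| \leq 2K$, where $K \coloneqq \sup_{n \in \mathbb{N}_0}\|T^n\|$, this gives $\|R(\lambda,T)(I-T)\| \leq 2K/(|\lambda|-1)$.

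The step that requires a moment of care—and which I view as the main (if modest) obstacle—is reconciling these two regimes into a single bound of the required shape. The point is that on $|\lambda| \geq 2$ one has $(|\lambda|-1)^{\alpha} \geq 1$, so $2K/(|\lambda|-1) \leq 2K/(|\lambda|-1)^{1-\alpha}$; thus the faster $O\big((|\lambda|-1)^{-1}\big)$ decay away from the circle is dominated by the slower target rate there, while near the circle the target rate is genuinely the sharp one supplied by hypothesis (ii). Taking $M$ to be the larger of the two constants produced above then yields the displayed estimate for every $\lambda \in \mathbb{E}$, and applying Theorem~\ref{thm:resol_charactrization} completes the proof of (ii) $\Rightarrow$ (i), and hence of the equivalence.
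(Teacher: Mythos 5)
Your proposal is correct and follows essentially the same route as the paper: both directions reduce to Lemma~\Rref{lem:Banach_RK_cond}.a) for (i)~$\Rightarrow$~(ii) and, for (ii)~$\Rightarrow$~(i), to verifying the estimate $\|R(\lambda,T)(I-T)\| \leq M_1/(|\lambda|-1)^{1-\alpha}$ on all of $\mathbb{E}$ via \eqref{eq:resolvent_change} near $\mathbb{T}$ and the power-boundedness (Kreiss) bound away from it, before invoking Theorem~\Rref{thm:resol_charactrization} with $S=I-T$, $k=1$, $f(n)=n^{\alpha}$. The extra detail you supply for the region $|\lambda|\geq 2$ is exactly what the paper's terse appeal to the Kreiss property amounts to.
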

\begin{proof}
	The implication (i) $\Rightarrow$ (ii) follows from Lemma~\ref{lem:Banach_RK_cond}.a).
	Suppose that statement (ii) holds. Then
	\eqref{eq:resolvent_change} yields
	\[
	\|R(\lambda,T)(I-T)\| \leq 
	\frac{1+M}{(|\lambda|-1)^{1-\alpha}} 
	\]
	for all $\lambda \in \mathbb{C}$ with $1 < |\lambda| < 2$.
	Combining this with the fact that  $T$ is a Kreiss operator, 
	we see that there exists $M_1 >0$ such that 
	\[
	\|R(\lambda,T)(I-T)\| \leq 
	\frac{M_1}{(|\lambda|-1)^{1-\alpha}} 
	\]
	for all $\lambda \in \mathbb{E}$. Hence,
	Theorem~\ref{thm:resol_charactrization} shows that 
	$\|T^n(I-T)\| = O(n^{-\alpha})$ as $n \to \infty$.
\end{proof}

\begin{remark}
	The implication (ii) $\Rightarrow$ (i) of Proposition~\ref{prop:equiv_decay_RK}
	can be proved  alternatively
	by using the results in \cite{Seifert2016,Mahillo2024}.
	Let $\alpha ,\beta \geq 0$.
	Following \cite[Definition~1.1]{Mahillo2024},
	we say that 
	$T \in \mathcal{L}(X)$ is an
	{\em $(\alpha,\beta)$-RK operator}
	if $T$ satisfies
	$r(T) \leq 1$
	and
	\[
	\|R(\lambda, T)\| \leq \frac{M |\lambda|^{\alpha+\beta-1}}{|\lambda-1|^{\alpha} (|\lambda| - 1)^{\beta}}
	\]
	for all $\lambda \in \mathbb{E}$ and some 
	constant $M>0$. 
	As seen in the proof of Lemma~\ref{lem:Banach_RK_cond}.b),
	if
	a power-bounded operator $T$ on a Banach space $X$ 
	satisfies the estimate given in 
	statement (ii) of Proposition~\ref{prop:equiv_decay_RK},
	then $T$ is a
	$(1,1-\alpha)$-RK operator. It follows from \cite[Lemma~2.1]{Mahillo2024}
	that
	$\sigma(T) \cap \mathbb{T}
	\subset \{1 \}$. Moreover, 
	\begin{equation}
		\label{eq:resolvent_bound_1_a}
		\|R(e^{i \theta},T)\| = O \left(
		\frac{1}{|\theta|^{1/\alpha}}
		\right)
	\end{equation}
	as $\theta \to 0$
	by  \cite[Proposition~2.2]{Mahillo2024}.
	Therefore, \cite[Corollary~3.1]{Seifert2016} shows that
	\[
	\|T^n(I-T)\| = O \left( \frac{(\log n)^{\alpha} }{n^{\alpha}}\right)
	\]
	as $n \to \infty$; see also \cite[Theorem~3.5]{Mahillo2024}.
	When $X$ is a Hilbert space, 
	$\|T^n(I-T)\| = O(n^{-\alpha})$ as $n \to \infty$ by 
	\cite[Theorem~3.10]{Seifert2016}.
\end{remark}

Let $T$ be
a normal contraction on a Hilbert space, and let
$\alpha \in (1/2,1)$.
In \cite[Corollary~2.7]{Cohen2016},
the decay rate $\|T^n(I-T)\| = O(n^{-\alpha})$ as $n \to \infty$
was characterized in terms of the property that $\sigma(T)$
is contained in some quasi-Stolz set.
Using the $\delta$-Stolz domain and Lemma~\ref{lem:Banach_RK_cond}, we obtain
an extended characterization in the Banach space case with $\alpha \in (0,1)$.
\begin{proposition}
	\label{prop:spectrum_characterization}
	Let $X$ be a Banach space, and let
	$T \in \mathcal{L}(X)$ be a Kreiss  operator. Assume that 
	there exist constants $C \geq 1$ and $n_0 \in \mathbb{N}$
	such that 
	\begin{equation}
		\label{eq:spectrum_bound}
		\|T^n (I-T)\| \leq C \max_{\lambda \in \sigma(T)} |\lambda|^n |1-\lambda|
	\end{equation}
	for all $n \geq n_0$.
	Then the following statements are equivalent for a fixed 
	$\alpha \in (0,1)$:
	\begin{enumerate}[label=\upshape(\roman*), leftmargin=*, widest=iii]
		\item $\|T^n(I-T)\| = O(n^{-\alpha} )$ as $n \to \infty$.
		\item  There exists a constant $M>0$ such that 
		\[
		\|R(\lambda,T)\| \leq 
		\frac{M}{|\lambda-1|(|\lambda|-1)^{1-\alpha}}
		\]
		for all $\lambda \in \mathbb{C}$ with $1 < |\lambda| < 2$. \vspace{2pt}
		\item $\sigma(T) \subset \overline{S^{1/\alpha}_{c}}$
		for some constant $c \geq 2$, where 
		$S^{1/\alpha}_{c}$ is the $(1/\alpha)$-Stolz domain defined by \eqref{eq:Stolz_domain}.
	\end{enumerate}
\end{proposition}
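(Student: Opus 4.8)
The plan is to prove the cycle of implications (i) $\Rightarrow$ (ii) $\Rightarrow$ (iii) $\Rightarrow$ (i). The first two implications I expect to read off directly from Lemma~\ref{lem:Banach_RK_cond}, with the Kreiss hypothesis on $T$ supplying the missing resolvent bound; the real work will lie in the implication (iii) $\Rightarrow$ (i), where the spectral bound \eqref{eq:spectrum_bound} is what converts the spectral inclusion into a decay rate.

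For (i) $\Rightarrow$ (ii) I would simply invoke Lemma~\ref{lem:Banach_RK_cond}.a), whose conclusion \eqref{eq:I-T_resol1} is exactly statement~(ii). For (ii) $\Rightarrow$ (iii) I would apply Lemma~\ref{lem:Banach_RK_cond}.b). That lemma requires two estimates: the bound \eqref{eq:I-T_resol1}, which is statement~(ii), and the Kreiss bound \eqref{eq:I-T_resol2}, namely $\|R(\lambda,T)\| \leq M/(|\lambda|-1)$ on $\mathbb{E}$. The latter holds because $T$ is assumed to be a Kreiss operator, so Lemma~\ref{lem:Banach_RK_cond}.b) will yield $\sigma(T) \subset \overline{S^{1/\alpha}_{c}}$ for some $c \geq 2$, which is statement~(iii).

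The heart of the argument is (iii) $\Rightarrow$ (i). Assuming $\sigma(T) \subset \overline{S^{1/\alpha}_{c}}$, by \eqref{eq:spectrum_bound} it will suffice to prove
\[
\max_{\lambda \in \sigma(T)} |\lambda|^n |1-\lambda| = O(n^{-\alpha})
\]
as $n \to \infty$. Since $r(T) \leq 1$ forces $|\lambda| \leq 1$, and $\overline{S^{1/\alpha}_{c}}$ meets $\mathbb{T}$ only at $\lambda = 1$ (where $|1-\lambda| = 0$), I would restrict attention to $\lambda$ with $|\lambda| < 1$. Writing $s := 1 - |\lambda| \in (0,1]$ and $d := |1-\lambda|$, the defining inequality of the $(1/\alpha)$-Stolz domain reads $d^{1/\alpha} \leq c\,s$, i.e.\ $s \geq d^{1/\alpha}/c$. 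As $(1-s)^n$ decreases in $s$, the quantity $(1-s)^n d$ is largest, for each fixed $d$, at the smallest admissible value $s = d^{1/\alpha}/c$.

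Setting $u := d^{1/\alpha}/c$, so that $d = c^{\alpha} u^{\alpha}$, this reduces everything to the one-variable estimate
\[
|\lambda|^n |1-\lambda| \leq c^{\alpha}\, u^{\alpha} (1-u)^n,\qquad u \in [0,1].
\]
I would then maximize $u \mapsto u^{\alpha}(1-u)^n$ over $[0,1]$: the maximum occurs at $u = \alpha/(n+\alpha)$ and equals $(\alpha/(n+\alpha))^{\alpha} (n/(n+\alpha))^n$. Since $(n/(n+\alpha))^n \to e^{-\alpha}$ remains bounded while $(\alpha/(n+\alpha))^{\alpha} = O(n^{-\alpha})$, this maximum is $O(n^{-\alpha})$ uniformly in $\lambda$, and \eqref{eq:spectrum_bound} then gives $\|T^n(I-T)\| = O(n^{-\alpha})$, closing the cycle. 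The main obstacle I anticipate is this Stolz-domain optimization: one must confirm that the boundary constraint $s = d^{1/\alpha}/c$ is the binding one and that the resulting extremal value carries exactly the exponent $-\alpha$; everything else is routine bookkeeping.
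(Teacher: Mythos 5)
Your proposal is correct and follows essentially the same route as the paper: (i)~$\Rightarrow$~(ii) and (ii)~$\Rightarrow$~(iii) are read off from Lemma~\ref{lem:Banach_RK_cond} (with the Kreiss hypothesis supplying \eqref{eq:I-T_resol2}), and (iii)~$\Rightarrow$~(i) reduces via the Stolz inequality $|1-\lambda|^{1/\alpha}\leq c(1-|\lambda|)$ and \eqref{eq:spectrum_bound} to maximizing a one-variable expression whose extremal value is $(\alpha/(n+\alpha))^{\alpha}(n/(n+\alpha))^{n}=O(n^{-\alpha})$. Your substitution $u=|1-\lambda|^{1/\alpha}/c$ is just a relabeling of the paper's optimization of $s^{n}(1-s)^{\alpha}$ over $s=|\lambda|\in[0,1]$, yielding the identical maximum.
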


\begin{proof}
	The implications (i) $\Rightarrow$ (ii) 
	and (ii) $\Rightarrow$ (iii) follow from Lemma~\ref{lem:Banach_RK_cond}.
	It remains to prove the implication
	(iii) $\Rightarrow$ (i).
	
	Let $n \geq n_0$. By assumption, 
	$|1-\lambda| \leq c^{\alpha} (1 - |\lambda|)^{\alpha}$ 
	for all $\lambda \in \sigma(T)$. Hence,
	\begin{equation}
		\label{eq:quasi_bound}
		\|T^n (I-T)\| \leq c^{\alpha}\,C \max_{\lambda \in \overline{\mathbb{D}}}
		|\lambda|^n (1 - |\lambda|)^{\alpha}.
	\end{equation}
	Define $f(s) \coloneqq
	s^n(1-s)^{\alpha}$ for $0\leq s \leq 1$. Then 
	\[
	f'(s) = s^{n-1}(1-s)^{\alpha-1} ( n - (n+\alpha)s )
	\]
	for all $s \in [0,1)$. Therefore,
	\begin{equation}
		\label{eq:f_bound_quasi}
		\max_{0 \leq s \leq 1} f(s) = f\left(
		\frac{n}{n+\alpha}
		\right) =
		\left(
		\frac{n}{n+\alpha}
		\right)^n \left(
		\frac{\alpha}{n+\alpha}
		\right)^{\alpha}
	\end{equation}
	From \eqref{eq:quasi_bound} and \eqref{eq:f_bound_quasi},
	we conclude that statement (i) holds.
\end{proof}

The estimate \eqref{eq:spectrum_bound} is satisfied for
operators similar to quasi-multiplication operators studied in
\cite[Section~5]{Ng2020}.
Typical examples of quasi-multiplication operators 
are normal operators on Hilbert spaces and 
multiplication operators on $L^p$-spaces with 
$1 \leq p < \infty$. An analytic Toeplitz operator
on $H^2(\mathbb{T})$, which 
is not normal in general, is also a quasi-multiplication operator,
as shown in \cite[Example~5.1]{Ng2020}.

\section{Integral estimate for resolvents}
\label{sec:integral_estimate}
An integral estimate for resolvents that characterizes
bounded $C_0$-semigroups on Hilbert spaces was given in
\cite{Gomilko1999} and \cite{Shi2000}.
This characterization 
has the following discrete analogue; see \cite{Cojuhari2008} and \cite[Theorem~II.1.12]{Eisner2010} for the proof.
\begin{theorem}
	\label{thm:GSF}
	Let $X$ be a Hilbert space, and let $T \in \mathcal{L}(X)$ satisfy $r(T) \leq 1$.
	Then the following statements are equivalent:
	\begin{enumerate}[label=\upshape(\roman*), leftmargin=*, widest=ii]
		\item $T$ is power-bounded.
		\item There exists a constant $M>0$ such that 
		\[
		\sup_{1 < r < 2} (r-1)
		\int_0^{2\pi} \left(
		\|R(re^{i \theta}, T) x\|^2 + \|R(re^{i \theta}, T^*) x\|^2
		\right) d\theta \leq M \|x\|^2
		\]
		for all $x \in X$.
	\end{enumerate}
\end{theorem}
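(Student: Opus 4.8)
The plan is to prove the two implications separately, using only the operator-valued Parseval equality recorded above (with $S=I$) and the resolvent representation of powers in Lemma~\ref{lem:T_power_rep}. The forward implication is a one-line consequence of Parseval, while the reverse implication carries essentially all the difficulty and explains why both $T$ and $T^*$ must appear in statement~(ii).

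For (i)~$\Rightarrow$~(ii), I would apply Parseval's equality with $S=I$ and $k=1$, which gives
\[
\frac{1}{2\pi}\int_0^{2\pi}\|R(re^{i\theta},T)x\|^2\,d\theta=\sum_{n=0}^{\infty}\frac{\|T^nx\|^2}{r^{2n+2}},\qquad r>1.
\]
Writing $K\coloneqq\sup_n\|T^n\|<\infty$, the right-hand side is at most $K^2\|x\|^2\sum_{n\ge0}r^{-2n-2}=K^2\|x\|^2/(r^2-1)$, so that $(r-1)\int_0^{2\pi}\|R(re^{i\theta},T)x\|^2\,d\theta\le 2\pi K^2\|x\|^2/(r+1)\le 2\pi K^2\|x\|^2$. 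Since $\|(T^*)^n\|=\|T^n\|$, the operator $T^*$ is power-bounded with the same constant, and the identical bound holds with $T$ replaced by $T^*$; adding the two gives (ii) with $M=4\pi K^2$.

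For (ii)~$\Rightarrow$~(i), the decisive idea is to extract the single power $T^n$ from a \emph{squared} resolvent and then split the square between $T$ and $T^*$. Since $r(T)\le1<r$, Lemma~\ref{lem:T_power_rep} with $k=2$ yields
\[
(n+1)T^n=\frac{r^{n+2}}{2\pi}\int_0^{2\pi}e^{i\theta(n+2)}R(re^{i\theta},T)^2\,d\theta .
\]
Pairing with $y\in X$ and using $R(\lambda,T)^*=R(\bar\lambda,T^*)$, I would factor $\langle R(re^{i\theta},T)^2x,y\rangle=\langle R(re^{i\theta},T)x,\,R(re^{-i\theta},T^*)y\rangle$, then apply the Cauchy--Schwarz inequality in $\theta$ and the substitution $\theta\mapsto-\theta$ to realign the second factor with the hypothesis, obtaining
\[
(n+1)\,|\langle T^nx,y\rangle|\le\frac{r^{n+2}}{2\pi}\left(\int_0^{2\pi}\|R(re^{i\theta},T)x\|^2\,d\theta\right)^{1/2}\left(\int_0^{2\pi}\|R(re^{i\theta},T^*)y\|^2\,d\theta\right)^{1/2}.
\]
By (ii) each factor is bounded by $(M/(r-1))^{1/2}$ times $\|x\|$ and $\|y\|$ respectively, so $(n+1)\,|\langle T^nx,y\rangle|\le r^{n+2}M\|x\|\|y\|/\big(2\pi(r-1)\big)$.

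To conclude, I would optimize over $r$ by taking $r=1+1/(n+1)$, so that $1/(r-1)=n+1$ while $r^{n+2}$ remains bounded by an absolute constant; cancelling the factor $n+1$ then gives $|\langle T^nx,y\rangle|\le CM\|x\|\|y\|$ uniformly in $n$, and taking the supremum over $\|x\|,\|y\|\le1$ yields $\sup_n\|T^n\|<\infty$. The main obstacle is this reverse implication, and within it the critical step is the factorization of $R(re^{i\theta},T)^2$ as a product of a $T$-resolvent and a $T^*$-resolvent: it is precisely this move that forces both summands into (ii) and that lets the unaveraged power $T^n$ be controlled by the integral hypothesis.
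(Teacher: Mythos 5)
Your proof is correct. Note that the paper does not prove Theorem~\ref{thm:GSF} at all --- it cites \cite{Cojuhari2008} and \cite[Theorem~II.1.12]{Eisner2010} --- so there is no internal proof to compare against; but your argument is the standard one, and your reverse implication is precisely the $k=1$, $S=I$ instance of the duality device the paper itself uses in the proof of Lemma~\ref{lem:int_cond_to_decay} (representing $(n+1)T^n$ via Lemma~\ref{lem:T_power_rep} with a squared resolvent, splitting $\langle R(\lambda,T)^2x,y\rangle=\langle R(\lambda,T)x,R(\bar\lambda,T^*)y\rangle$, applying Cauchy--Schwarz in $\theta$, and choosing $r=1+1/(n+1)$), while the forward implication is the immediate Parseval computation.
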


The following theorem gives 
an integral estimate to characterize
rates of decay for discrete operator semigroups
on Hilbert spaces.
This result is a generalization of \cite[Proposition~4.3]{Wakaiki2023ESAIM}, 
where the case $f(n) = n^{\alpha}$ with $0 <\alpha
< 1/2$ was considered for the rate of decay of an individual orbit.
\begin{theorem}
	\label{thm:int_cond_chara}
	Let $X$ be a Hilbert space, and let $Y$ be a Banach space.
	Let
	$T \in \mathcal{L}(X)$ be power-bounded, and let 
	$S \in \mathcal{L}(Y,X)$.
	Suppose that $\alpha > 0$ and $k \in \mathbb{N}$ satisfy $k > \alpha + 1/2$.
	If 
	$f \colon (0,\infty) \to (0,\infty)$ is $\alpha$-bounded
	regularly varying,
	then the following statements are equivalent: \vspace{3pt}
	\begin{enumerate}[label=\upshape(\roman*), leftmargin=*, widest=ii]
		\item $\displaystyle
		\|T^nS\| = O \left(
		\frac{1}{
			f(n)}
		\right)$ as $n \to \infty$. \vspace{3pt}
		\item There exists a constant $M>0$ such that 
		\[
		\sup_{1<r<2} F_k(r^2-1) \int_{0}^{2\pi}
		\|R(r e^{i \theta}, T)^k S y\|^2 d\theta \leq M \|y\|^2
		\]
		for all $y \in Y$,
		where 
		\begin{equation}
			\label{eq:Fk_def}
			F_k(s) \coloneqq s^{2k-1} f(1/s
			)^{2},\quad s >0.
		\end{equation}
	\end{enumerate}
\end{theorem}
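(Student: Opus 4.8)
The plan is to reduce both implications to a single series identity coming from Parseval's equality, and then dispatch the two directions using the two transference techniques from Section~\ref{sec:resolvent_growth}. First I would apply Parseval's equality (with the Hilbert space there taken to be $X$ itself, the operator $S$ replaced by the identity, and the vector $x = Sy$) to obtain, for every $y \in Y$ and $1 < r < 2$,
\[
\frac{1}{2\pi}\int_0^{2\pi}\|R(re^{i\theta},T)^k Sy\|^2\,d\theta = \sum_{m=0}^{\infty}\binom{m+k-1}{k-1}^2\frac{\|T^m Sy\|^2}{r^{2(m+k)}}.
\]
Crucially, this identity requires no commutativity between $S$ and $T$ and no structure on $Y$, since the Parseval step acts on the fixed vector $Sy \in X$; this is exactly what lets the integral formulation cover a general $S$, which the pointwise argument of Lemma~\ref{lem:resol_to_Tn} could not. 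Everything then reduces to estimating the right-hand series.

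For the implication (i)~$\Rightarrow$~(ii) I would set $c(m) := \binom{m+k-1}{k-1}^2\|T^m Sy\|^2$ and use $\|T^m Sy\| \le \|T^m S\|\,\|y\|$ together with (i) to get $c(m) = O\bigl(m^{2k-2}/g(m)\bigr)\|y\|^2$, where $g := f^2$ is $(2\alpha)$-bounded regularly varying by Lemma~\ref{lem:basic_property}.b). Since the hypothesis $k > \alpha + 1/2$ is precisely $2k-2 > 2\alpha - 1$, Proposition~\ref{prop:cn_bound} applies with $\beta = 2k-2$ to the variable $\rho = r^2$, giving $\sum_m c(m)/r^{2m} = O\bigl(1/F_k(r^2-1)\bigr)\|y\|^2$ as $r \downarrow 1$. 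Multiplying by $F_k(r^2-1)$ handles $r$ near $1$; for $r$ bounded away from $1$ the series converges geometrically and $F_k$ is bounded on a compact interval, so power-boundedness closes the estimate. Keeping track of constants shows that the final bound scales like $\|y\|^2$, which is statement (ii).

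For the converse (ii)~$\Rightarrow$~(i) I would feed (ii) into the identity above to get $\sum_m \binom{m+k-1}{k-1}^2\|T^m Sy\|^2/r^{2(m+k)} \le (M/2\pi)\,\|y\|^2/F_k(r^2-1)$ for all $1 < r < 2$. Truncating the sum at $m = n$, using $r^{-2m} \ge r^{-2n}$ and $\binom{m+k-1}{k-1}^2 \ge c_k(m+1)^{2(k-1)}$, and then choosing $r = r_n$ with $r_n^2 = 1 + 1/n$ (so that $F_k(r_n^2-1) = f(n)^2/n^{2k-1}$ while $r_n^{2(n+k)}$ stays bounded) would yield
\[
\sum_{m=0}^{n}(m+1)^{2(k-1)}\|T^m Sy\|^2 \le C\,\frac{n^{2k-1}}{f(n)^2}\,\|y\|^2.
\]
I would then run the averaging argument of Lemma~\ref{lem:resol_to_Tn}: writing $T^n Sy = p(n)^{-1}\sum_{m=0}^n (m+1)^{k-1} T^{n-m}T^m Sy$ with $p(n) = \sum_{m=0}^n (m+1)^{k-1}$, pairing with $z \in X$, applying the Cauchy--Schwarz inequality, and bounding the adjoint factor by $\sum_{m=0}^n\|(T^*)^{n-m}z\|^2 \le K^2(n+1)\|z\|^2$. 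Combined with $p(n) \ge c\,n^k$ this gives $\|T^n Sy\| \le C'\|y\|/f(n)$ uniformly in $y$, that is, (i).

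The hard part will be the exponent bookkeeping in (ii)~$\Rightarrow$~(i). Parseval hands us the weight $\binom{m+k-1}{k-1}^2 \sim (m+1)^{2(k-1)}$, whereas reconstructing $T^n$ from its orbit needs an averaging weight whose square pairs against the power-bounded adjoint sum. Choosing the averaging weight $(m+1)^{k-1}$, one power lower than in Lemma~\ref{lem:resol_to_Tn} because here we work with $R(\lambda,T)^k$ rather than $R(\lambda,T)^{k+1}$, is what makes the powers cancel exactly, since $\sqrt{n+1}\,n^{k-1/2}/n^{k} \le \sqrt{2}$ recovers the sharp rate $1/f(n)$ with no spurious factor of $n$. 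A secondary care point is verifying that the constant in (i)~$\Rightarrow$~(ii) is genuinely uniform in $y$: I would need to check that both the constant produced by Proposition~\ref{prop:cn_bound} and the contribution from the regime with $r$ away from $1$ scale linearly in $\|y\|^2$.
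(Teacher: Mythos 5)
Your argument is correct, and while the forward direction coincides with the paper's, the converse is handled by a genuinely different route. For (i)~$\Rightarrow$~(ii) you do exactly what the paper's Lemma~\ref{lem:decay_to_int_cond} does: Parseval applied to the vector $Sy$, the substitution $h = f^2$ via Lemma~\ref{lem:basic_property}.b), and Proposition~\ref{prop:cn_bound} with $\beta = 2(k-1) > 2\alpha - 1$. For (ii)~$\Rightarrow$~(i), however, the paper (Lemma~\ref{lem:int_cond_to_decay}) does not return to Parseval at all: it represents $\langle T^nSy, x\rangle$ by the Cauchy-type integral of Lemma~\ref{lem:T_power_rep} with $R(re^{i\theta},T)^{k+1}$, peels one resolvent factor onto the adjoint side, and estimates the two factors in Cauchy--Schwarz by hypothesis~(ii) and by the Gomilko--Shi--Feng-type bound $\sup_{1<r<2}(r-1)\int_0^{2\pi}\|R(re^{-i\theta},T^*)x\|^2\,d\theta \le M_1\|x\|^2$ from Theorem~\ref{thm:GSF}, before optimizing $r = (1+1/n)^{1/2}$. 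You instead invert Parseval to turn~(ii) into the truncated weighted orbit bound $\sum_{m=0}^n (m+1)^{2(k-1)}\|T^mSy\|^2 \le C\,n^{2k-1}f(n)^{-2}\|y\|^2$ and then rerun the Ces\`aro averaging of Lemma~\ref{lem:resol_to_Tn} with weight $(m+1)^{k-1}$; your exponent bookkeeping ($\sqrt{n+1}\cdot n^{k-1/2}/n^{k}$ bounded) is right, and the averaging identity $T^{n-m}(T^mSy) = T^nSy$ indeed needs no commutativity, so the argument legitimately covers general $S \in \mathcal{L}(Y,X)$. What each approach buys: yours is more self-contained, replacing the appeal to Theorem~\ref{thm:GSF} by the elementary bound $\sum_{m=0}^n\|(T^*)^{n-m}z\|^2 \le K^2(n+1)\|z\|^2$ and reusing Section~\ref{sec:resolvent_growth} machinery, whereas the paper's route avoids the truncation step and produces the clean intermediate statement of Lemma~\ref{lem:int_cond_to_decay} for an arbitrary weight $F$, which is then reused verbatim in the perturbation argument of Section~\ref{sec:perturbation}. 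One cosmetic point: like the paper, you pass from the regime $r \downarrow 1$ (where Proposition~\ref{prop:cn_bound} applies) to all of $1<r<2$ by boundedness of $F_k$ and geometric convergence; this is fine and at the same level of detail as the paper's own proof.
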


The transference from 
the semigroup estimate to the integral estimate in
Theorem~\ref{thm:int_cond_chara} is obtained from
the following lemma.
\begin{lemma}
	\label{lem:decay_to_int_cond}
	Let $X$ and $Y$ be Banach spaces, and let $Z$ be a Hilbert space.
	Let $T \in \mathcal{L}(X)$ satisfy $r(T) \leq 1$, and let 
	$S_1 \in \mathcal{L}(X,Z)$ and 
	$S_2 \in \mathcal{L}(Y,X)$.
	Suppose that $\alpha \geq 0$ and $k \in \mathbb{N}$ satisfy $k > \alpha + 1/2$, and
	let 
	$f \colon (0,\infty) \to (0,\infty)$ be $\alpha$-bounded
	regularly varying.
	If $\|S_1 T^n S_2\| = O(1/f(n))$ as $n \to \infty$, then
	there exist a constant $M>0$ such that
	\[
	\sup_{1<r<2} F_k(r^2-1) \int_{0}^{2\pi}
	\|S_1 R(r e^{i \theta}, T)^k S_2 y\|^2 d\theta \leq M \|y\|^2
	\]
	for all $y \in Y$,
	where $F_k$ is as in Theorem~\ref{thm:int_cond_chara}.
\end{lemma}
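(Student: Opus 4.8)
The plan is to convert the integral into a numerical Dirichlet series through the operator-valued Parseval equality of Section~\ref{sec:preliminaries}, and then to read off the claimed bound from Proposition~\ref{prop:cn_bound} after a change of variable. First I would apply Parseval's equality with the operator $S_1 \in \mathcal{L}(X,Z)$ into the Hilbert space $Z$ and the vector $x \coloneqq S_2 y \in X$; this is legitimate because $r > 1 \geq r(T)$ for every $r \in (1,2)$, and it gives
\[
\frac{1}{2\pi}\int_0^{2\pi} \|S_1 R(re^{i\theta},T)^k S_2 y\|^2\, d\theta = \sum_{n=0}^{\infty} \binom{n+k-1}{k-1}^2 \frac{\|S_1 T^n S_2 y\|^2}{r^{2(n+k)}}
\]
for all $r \in (1,2)$ and $y \in Y$.

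Next I would estimate the right-hand sum. Since $\binom{n+k-1}{k-1}$ is a polynomial in $n$ of degree $k-1$, and $\|S_1 T^n S_2 y\| \leq \|S_1 T^n S_2\|\,\|y\|$ with $\|S_1 T^n S_2\| = O(1/f(n))$, the coefficients satisfy $\binom{n+k-1}{k-1}^2 \|S_1 T^n S_2\|^2 = O\big(n^{2k-2}/f(n)^2\big)$. By Lemma~\ref{lem:basic_property}.b) the function $g \coloneqq f^2$ is $2\alpha$-bounded regularly varying, so I may fix $c \colon \mathbb{N}_0 \to (0,\infty)$ with $c(n) = O\big(n^{2k-2}/g(n)\big)$ that dominates these coefficients up to a constant for every $n$. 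Putting $\rho \coloneqq r^2$, the Parseval sum is then at most a constant multiple of $\|y\|^2\, r^{-2k}\sum_{n=0}^{\infty} c(n)/\rho^{n}$. I would apply Proposition~\ref{prop:cn_bound} to $c$ with index $2\alpha$ and exponent $\beta \coloneqq 2k-2$; the standing hypothesis $k > \alpha + 1/2$ is exactly the admissibility condition $\beta > 2\alpha - 1$, so the proposition yields
\[
\sum_{n=0}^{\infty}\frac{c(n)}{\rho^{n}} = O\!\left( \frac{1}{(\rho-1)^{2k-1}\, g\big(1/(\rho-1)\big)} \right) = O\!\left( \frac{1}{F_k(\rho-1)} \right)
\]
as $\rho \downarrow 1$, where I substituted $g = f^2$ and recognised that $F_k(s) = s^{2k-1} f(1/s)^2$.

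Combining the two displays with $\rho = r^2$ and $r^{-2k} \leq 1$ produces a constant $M_1 > 0$ and some $r_1 \in (1,2)$ for which $F_k(r^2-1)\int_0^{2\pi}\|S_1 R(re^{i\theta},T)^k S_2 y\|^2\, d\theta \leq M_1\|y\|^2$ whenever $1 < r < r_1$ and $y \in Y$. It remains to control the compact range $r \in [r_1, 2)$. On the closed annulus $\{\lambda : r_1 \leq |\lambda| \leq 2\}$, which lies in $\varrho(T)$ since $r(T) \leq 1 < r_1$, the map $\lambda \mapsto R(\lambda,T)^k$ is continuous and therefore norm-bounded, so $\int_0^{2\pi}\|S_1 R(re^{i\theta},T)^k S_2 y\|^2\, d\theta$ is bounded by a constant times $\|y\|^2$ uniformly on this range; as $F_k$ is bounded on the corresponding compact set of arguments, the product is again $O(\|y\|^2)$. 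Taking the supremum over $1 < r < 2$ then finishes the proof.

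The step I expect to be the crux is the parameter bookkeeping in the second paragraph. The essential point is that Parseval squares every ingredient, so the natural data for Proposition~\ref{prop:cn_bound} are $g = f^2$ (which is $2\alpha$-bounded regularly varying by Lemma~\ref{lem:basic_property}.b)) and $\beta = 2k-2$, rather than $f$ and $k-1$; with this choice the change of variable $\rho = r^2$ turns the weight $r^{-2(n+k)}$ into $\rho^{-(n+k)}$, the exponent $\beta + 1 = 2k-1$ and the factor $g(1/(\rho-1)) = f(1/(\rho-1))^2$ assemble precisely into $F_k(\rho-1)$, and the admissibility condition $\beta > 2\alpha - 1$ reduces exactly to the hypothesis $k > \alpha + 1/2$.
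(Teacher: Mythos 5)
Your argument is correct and takes essentially the same route as the paper's proof: both apply the operator-valued Parseval equality to $S_1R(re^{i\theta},T)^kS_2y$, pass to $f^2$ (which is $(2\alpha)$-bounded regularly varying by Lemma~\ref{lem:basic_property}.b)), and invoke Proposition~\ref{prop:cn_bound} with $\beta=2k-2$, the hypothesis $k>\alpha+1/2$ being exactly the admissibility condition $\beta>2\alpha-1$. Your explicit change of variable $\rho=r^2$ and your treatment of the remaining range $r\in[r_1,2)$ via resolvent continuity on the compact annulus are details the paper leaves implicit, but they do not alter the substance of the argument.
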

\begin{proof}
	Parseval's equality yields
	\begin{align}
		\label{eq:Parseval_to_cn}
		\frac{1}{2\pi}
		\int_0^{2\pi} \|S_1 R(re^{i\theta},T)^k S_2y \|^2 d\theta 
		&=
		\sum_{n=0}^{\infty} 
		\left\|
		\binom{n+k-1}{k-1}
		\frac{S_1T^nS_2y}{r^{n+k}} \right\|^2 \\
		&\leq \sum_{n=0}^{\infty} \frac{c(n)}{r^{2n}} \|y\|^2
		\notag 
	\end{align}
	for all $r > 1$ and $y \in Y$,
	where
	\[
	c(n) \coloneqq 
	\left( 
	\binom{n+k-1}{k-1}\, \| S_1T^nS_2 \| \right)^2
	\]
	for $n \in \mathbb{N}_0$.
	Define
	$h(t) \coloneqq f(t)^2$ for $t >0$.
	Since $\| S_1T^nS_2 \|^2 = O(1/h(n))$ as $n \to \infty$ by assumption,
	we have that 
	\[
	c(n) = O \left(
	\frac{n^{2(k-1)}}{h(n)}
	\right)
	\] 
	as $n \to \infty$.
	Lemma~\ref{lem:basic_property}.b) shows that
	$h$ is $(2\alpha)$-bounded regularly varying.
	By the estimate \eqref{eq:Parseval_to_cn} and
	Proposition~\ref{prop:cn_bound} with 
	$\beta = 2(k-1) > 2\alpha-1$, 
	there exist $M>0$ and $r_1 >1$ such that 
	\[
	\int_0^{2\pi} \|S_1 R(re^{i\theta},T)^k S_2y \|^2 d\theta \leq 
	\frac{M \|y\|^2}{F_k(r^2-1)}
	\]
	for all $r \in (1,r_1)$ and $y \in Y$.
	Thus, the desired estimate holds.
\end{proof}

Next, we show that the integral estimate leads to the semigroup estimate.
\begin{lemma}
	\label{lem:int_cond_to_decay}
	Let $X$ be a Hilbert space, and let $Y$ be a Banach space.
	Let
	$T \in \mathcal{L}(X)$ be power-bounded, and let $S \in \mathcal{L}(Y,X)$.
	If there exist a function $F:(0,\infty) \to (0,\infty)$
	and constants $M>0$ and $k \in \mathbb{N}$ such that
	\begin{equation}
		\label{eq:int_estimate_k}
		\sup_{1<r<2} F(r^2-1) \int_{0}^{2\pi}
		\|R(r e^{i \theta}, T)^k S y\|^2 d\theta \leq M \|y\|^2
	\end{equation}
	for all $y \in Y$,
	then
	\begin{equation}
		\label{eq:T_power_from_int}
		\|T^n S\| = O \left(
		\frac{1}{n^{k-1/2}F(1/n)^{1/2}}
		\right)
	\end{equation}
	as $n \to \infty$.
\end{lemma}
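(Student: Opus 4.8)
The plan is to mirror the argument of Lemma~\ref{lem:resol_to_Tn}, replacing the manipulation of $R(\lambda,T)^{k+1}$ used there by a direct application of Parseval's equality to the power $R(\lambda,T)^{k}$. First I would apply Parseval's equality to the vector $Sy \in X$ (taking the operator in that lemma to be the identity on the Hilbert space $X$), which gives, for every $r>1$ and $y \in Y$,
\[
\frac{1}{2\pi}\int_0^{2\pi}\|R(re^{i\theta},T)^k Sy\|^2\,d\theta = \sum_{n=0}^{\infty}\binom{n+k-1}{k-1}^2 \frac{\|T^n Sy\|^2}{r^{2(n+k)}}.
\]
Combining this with the hypothesis \eqref{eq:int_estimate_k} yields
\[
\sum_{n=0}^{\infty}\binom{n+k-1}{k-1}^2 \frac{\|T^n Sy\|^2}{r^{2(n+k)}} \leq \frac{M\|y\|^2}{2\pi F(r^2-1)}
\]
for all $r \in (1,2)$ and $y \in Y$.

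The crucial choice is $r = \sqrt{1+1/N}$ for a fixed $N \in \mathbb{N}$, so that $r^2-1 = 1/N$ exactly; since $F$ carries no regularity assumption, I cannot afford to evaluate it at a point merely comparable to $1/N$, and this is precisely why the hypothesis is stated in terms of $F(r^2-1)$ rather than $F(r-1)$. With this $r$ one has $r^{2(n+k)} \leq (1+1/N)^{N+k} \leq e\cdot 2^k$, which stays bounded as $N \to \infty$; restricting the sum to $n \leq N$ and discarding the factor $r^{-2(n+k)}$ then produces a constant $M_1 > 0$ with
\[
\sum_{n=0}^{N}\binom{n+k-1}{k-1}^2 \|T^n Sy\|^2 \leq \frac{M_1\|y\|^2}{F(1/N)}
\]
for all $N$.

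Next I would recover $\|T^N Sy\|$ from this weighted partial sum by the averaging trick of Lemma~\ref{lem:resol_to_Tn}. Writing $w_n := \binom{n+k-1}{k-1}^2$, $P_N := \sum_{n=0}^N w_n$, and $K := \sup_{m \in \mathbb{N}_0}\|T^m\|$, the identity $T^N Sy = P_N^{-1}\sum_{n=0}^N w_n T^{N-n}T^n Sy$ together with the Cauchy--Schwarz inequality, applied after splitting $w_n = \sqrt{w_n}\cdot\sqrt{w_n}$ and using $\|(T^*)^{N-n}z\| \leq K$ for $\|z\|\leq 1$, gives
\[
|\langle T^N Sy, z\rangle| \leq \frac{K}{P_N}\left(\sum_{n=0}^N w_n\|T^n Sy\|^2\right)^{1/2}\left(\sum_{n=0}^N w_n\right)^{1/2} \leq \frac{K\sqrt{M_1}\,\|y\|}{P_N^{1/2}F(1/N)^{1/2}}.
\]
Taking the supremum over $\|z\| \leq 1$ and then over $\|y\| \leq 1$ bounds $\|T^N S\|$.

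Finally, since $\binom{n+k-1}{k-1}^2$ is asymptotic to $n^{2(k-1)}/((k-1)!)^2$, the partial sum satisfies $P_N \sim c\,N^{2k-1}$ for some $c>0$, so $P_N^{1/2}$ is of order $N^{k-1/2}$; substituting this into the last display yields the desired estimate \eqref{eq:T_power_from_int}. I expect the main point to guard against is the rigidity of the evaluation point of $F$, which forces the choice $r=\sqrt{1+1/N}$ and makes essential use of the form $F(r^2-1)$ of the hypothesis; the averaging step and the polynomial growth estimate for $P_N$ are then routine.
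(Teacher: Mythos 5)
Your proof is correct, but it takes a genuinely different route from the paper's. The paper proves this lemma by writing $\langle T^nSy,x\rangle$ via the integral representation of $T^n$ with the resolvent power $k+1$ (Lemma~\ref{lem:T_power_rep}), moving one resolvent factor onto $x$ as $R(re^{-i\theta},T^{*})x$, invoking Theorem~\ref{thm:GSF} to control $\sup_{1<r<2}(r-1)\int_0^{2\pi}\|R(re^{-i\theta},T^{*})x\|^2d\theta$, and then applying the Cauchy--Schwarz inequality to the $\theta$-integral; the factor $(r-1)^{-1/2}\sim\sqrt{3n}$ from the adjoint estimate is what produces the exponent $k-1/2$. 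You instead stay entirely on the series side: Parseval's equality converts the integral hypothesis into the weighted square-function bound $\sum_{n=0}^{N}\binom{n+k-1}{k-1}^2\|T^nSy\|^2\leq M_1\|y\|^2/F(1/N)$, and the Ces\`aro averaging plus discrete Cauchy--Schwarz argument of Lemma~\ref{lem:resol_to_Tn} (with weights $w_n=\binom{n+k-1}{k-1}^2$ rather than $(m+1)^{2k}$) recovers $\|T^NS\|$; here the exponent $k-1/2$ comes from $P_N^{1/2}\asymp N^{k-1/2}$. Your version has the advantage of not needing Theorem~\ref{thm:GSF} at all --- only the trivial bound $\|(T^{*})^{m}z\|\leq K\|z\|$ --- and of making transparent that hypothesis \eqref{eq:int_estimate_k} is equivalent to a weighted $\ell^2$ condition on the orbit; the paper's version is the natural dual of Lemma~\ref{lem:decay_to_int_cond} and keeps the integral structure intact. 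Both arguments use power-boundedness and the same exact evaluation $r=\sqrt{1+1/n}$ forced by the lack of regularity of $F$, as you correctly identify. All your individual steps check out, including the lower bound $P_N\gtrsim N^{2k-1}$ obtained from $\binom{n+k-1}{k-1}\geq n^{k-1}/(k-1)!$.
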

\begin{proof}
	Let 
	$n \in \mathbb{N}$,
	$x \in X$, and $y \in Y$. 
	Define 
	\[
	p_k(n) \coloneqq 
	\binom{n+k}{k}.
	\]
	By Lemma~\ref{lem:T_power_rep},
	\begin{align*}
		\langle T^n Sy, x \rangle &=
		\left\langle 
		\frac{r^{n+k+1}}{2\pi p_k(n)} \int_0^{2\pi} e^{i \theta (n+k+1)}
		R(r e^{i \theta},T)^{k+1}Sy d\theta, x
		\right\rangle \\
		&=
		\frac{r^{n+k+1}}{2\pi p_k(n)}
		\left\langle 
		\int_0^{2\pi} e^{i \theta (n+k+1)}
		R(r e^{i \theta},T)^kSy d\theta, R(r e^{-i \theta},T^*)x
		\right\rangle.
	\end{align*}
	Theorem~\ref{thm:GSF}
	shows that 
	\begin{equation}
		\label{eq:T_adj_estimate}
		\sup_{1 < r< 2}
		(r-1)
		\int_0^{2\pi}
		\|R(r e^{-i \theta},T^*)x\|^2 d\theta \leq M_1\|x\|^2
	\end{equation}
	for all $x \in X$ and some constant $M_1 >0$.
	Combining 
	the Cauchy--Schwarz inequality with
	the estimates \eqref{eq:int_estimate_k} and \eqref{eq:T_adj_estimate}, we obtain
	\begin{align*}
		|\langle T^n Sy, x \rangle|  &\leq \frac{r^{n+k+1}}{2\pi p_k(n)}
		\left(
		\int_0^{2\pi}
		\|R(r e^{i \theta},T)^kSy\|^2 d\theta
		\right)^{1/2}
		\left(
		\int_0^{2\pi}
		\|R(r e^{-i \theta},T^*)x\|^2 d\theta
		\right)^{1/2} \\
		&\leq 
		\frac{r^{n+k+1}}{2\pi p_k(n) }
		\sqrt{\frac{MM_1}{(r-1)F(r^2-1)}}
		\|y\|\, \|x\|
	\end{align*}
	for all $r \in (1,2)$.
	Set
	\[
	r = \left(1 + \frac{1}{n} \right)^{1/2}.
	\]
	Then
	\[
	r-1  \geq \frac{1}{3n}.
	\]
	Since $p_k(n) \geq n^k/k!$,
	we obtain the desired estimate \eqref{eq:T_power_from_int}.
\end{proof}

Combining Lemmas~\ref{lem:decay_to_int_cond} and \ref{lem:int_cond_to_decay},
we prove Theorem~\ref{thm:int_cond_chara}.
\begin{proof}[Proof of Theorem~\ref{thm:int_cond_chara}.]
	The implication (i) $\Rightarrow $ (ii) follows from
	Lemma~\ref{lem:decay_to_int_cond}.
	Since the function $F_k$ defined by \eqref{eq:Fk_def}
	satisfies
	\[
	\frac{1}{n^{k-1/2}F_k(1/n)^{1/2}} =
	\frac{1}{f(n)}
	\]
	for $n \in \mathbb{N}$, 
	Lemma~\ref{lem:int_cond_to_decay} with 
	$F=F_k$ shows that the implication (ii) $\Rightarrow $ (i) holds.
\end{proof}

We conclude this section by
presenting a characterization of  Ritt operators
on Hilbert spaces
as a corollary of 
Theorem~\ref{thm:int_cond_chara},
\begin{corollary}
	\label{coro:Ritt2}
	Let $X$ be a Hilbert space. Let $T \in \mathcal{L}(X)$
	and $k \in \mathbb{N}$.
	Then the following statements are equivalent:
	\begin{enumerate}[label=\upshape(\roman*), leftmargin=*, widest=ii]
		\item $T$ is a Ritt operator.
		\item $T$ is power-bounded, and there exists 
		a constant $M>0$ such that 
		\[
		\sup_{1<r<2}
		(r-1)^{2k-1}
		\int_0^{2\pi}
		\|
		(re^{i\theta}-1)R(re^{i \theta},T)^{k+1}x
		\|^2 d\theta
		\leq M \|x\|^2
		\]
		for all $x \in X$.
	\end{enumerate}
\end{corollary}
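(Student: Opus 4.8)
The plan is to deduce this corollary from Theorem~\ref{thm:int_cond_chara} specialized to $S = I-T$, $f(n) = n$ (so $\alpha = 1$), and the integer $k+1$ in place of $k$. Since $k + 1 > 1 + \tfrac12$ holds for every $k \in \mathbb{N}$, Theorem~\ref{thm:int_cond_chara} applies, and with these choices the weight becomes $F_{k+1}(s) = s^{2(k+1)-1}(1/s)^{2} = s^{2k-1}$, hence $F_{k+1}(r^2-1) = (r^2-1)^{2k-1}$. By Theorem~\ref{thm:Ritt_characterization}, statement (i) is equivalent to $T$ being power-bounded together with $\|T^n(I-T)\| = O(n^{-1})$ as $n \to \infty$. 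Thus, invoking Theorem~\ref{thm:int_cond_chara}, statement (i) is equivalent to $T$ being power-bounded and
\[
\sup_{1<r<2}(r^2-1)^{2k-1}\int_0^{2\pi}\|R(re^{i\theta},T)^{k+1}(I-T)x\|^2 \, d\theta \leq M'\|x\|^2
\]
for all $x \in X$. It therefore remains to show that, for power-bounded $T$, this integral estimate is equivalent to the one in statement (ii) of the corollary.

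The bridge between the two integrands is the identity obtained by multiplying \eqref{eq:resolvent_change} by $R(\lambda,T)^k$ and using commutativity, namely
\[
(\lambda-1)R(\lambda,T)^{k+1} = R(\lambda,T)^k - R(\lambda,T)^{k+1}(I-T), \qquad \lambda \in \varrho(T).
\]
The two quantities differ only by the term $R(\lambda,T)^k$, so the crux is to control $\int_0^{2\pi}\|R(re^{i\theta},T)^k x\|^2 \, d\theta$ with the correct power of $r-1$. I would obtain this from Lemma~\ref{lem:decay_to_int_cond} with $S_1 = S_2 = I$ and the constant function $f \equiv 1$, which is $0$-bounded regularly varying and satisfies $k > 0 + \tfrac12$: since $T$ is power-bounded, $\|T^n\| = O(1)$, and the lemma produces a constant $M_0 > 0$ with
\[
\sup_{1<r<2}(r^2-1)^{2k-1}\int_0^{2\pi}\|R(re^{i\theta},T)^k x\|^2 \, d\theta \leq M_0\|x\|^2
\]
for all $x \in X$. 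Because $2(r-1) \leq r^2-1 \leq 3(r-1)$ on $(1,2)$, the weights $(r-1)^{2k-1}$ and $(r^2-1)^{2k-1}$ are comparable, so this auxiliary estimate and the estimate of the first paragraph each hold with either weight.

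With these ingredients the equivalence is routine. For (i) $\Rightarrow$ (ii), the displayed identity gives $\|(\lambda-1)R(\lambda,T)^{k+1}x\|^2 \leq 2\|R(\lambda,T)^k x\|^2 + 2\|R(\lambda,T)^{k+1}(I-T)x\|^2$; integrating, multiplying by $(r-1)^{2k-1}$, and combining the auxiliary estimate with the integral estimate of the first paragraph yields statement (ii). For (ii) $\Rightarrow$ (i), the same identity in the form $\|R(\lambda,T)^{k+1}(I-T)x\|^2 \leq 2\|R(\lambda,T)^k x\|^2 + 2\|(\lambda-1)R(\lambda,T)^{k+1}x\|^2$, together with the auxiliary estimate and hypothesis (ii), recovers the integral estimate of the first paragraph; since $T$ is power-bounded by hypothesis, Theorem~\ref{thm:int_cond_chara} then gives $\|T^n(I-T)\| = O(n^{-1})$, and Theorem~\ref{thm:Ritt_characterization} shows $T$ is a Ritt operator. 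The only genuinely technical point is the auxiliary $R(\lambda,T)^k$-estimate, and I anticipate no real obstacle beyond bookkeeping of the comparable weights $(r-1)^{2k-1}$ and $(r^2-1)^{2k-1}$.
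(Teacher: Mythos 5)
Your proposal is correct and follows essentially the same route as the paper: reduce via Theorems~\ref{thm:Ritt_characterization} and~\ref{thm:int_cond_chara} (with $S=I-T$, $f(n)=n$, $k+1$ in place of $k$) to the equivalence of the two integral estimates, pass between the integrands with \eqref{eq:resolvent_change} and $(a+b)^2\le 2(a^2+b^2)$, and absorb the cross term $\int_0^{2\pi}\|R(re^{i\theta},T)^kx\|^2\,d\theta$ by an auxiliary bound with weight $(r-1)^{2k-1}$. The only (harmless) difference is that you obtain this auxiliary bound from Lemma~\ref{lem:decay_to_int_cond} with $f\equiv 1$, whereas the paper derives it from the Kreiss estimate $\|R(\lambda,T)^kx\|\le M_3\|R(\lambda,T)x\|/(|\lambda|-1)^{k-1}$ combined with Theorem~\ref{thm:GSF}.
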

\begin{proof}
	By Theorems~\ref{thm:Ritt_characterization}
	and \ref{thm:int_cond_chara},
	it suffices to prove that the following statements
	are equivalent under the assumption that 
	$T$ is power-bounded:
	\begin{enumerate}[label=\upshape(ii-\alph*), leftmargin=*, widest=b]
		\item There exists $M_1>0$ such that 
		\begin{equation}
			\label{eq:resolvent_ritt_power1}
			\sup_{1<r<2}
			(r-1)^{2k-1}
			\int_0^{2\pi}
			\|
			(re^{i\theta}-1)R(re^{i \theta},T)^{k+1}x
			\|^2 d\theta
			\leq M_1 \|x\|^2
		\end{equation}
		for all $x \in X$.
		\item There exists $M_2>0$ such that 
		\begin{equation}
			\label{eq:resolvent_ritt_power2}
			\sup_{1<r<2}
			(r-1)^{2k-1}
			\int_0^{2\pi}
			\|
			R(re^{i \theta},T)^{k+1} (I-T)x
			\|^2 d\theta
			\leq M_2 \|x\|^2
		\end{equation}
		for all $x \in X$.
	\end{enumerate}
	
	Since $(a+b)^2 \leq 2 (a^2+b^2)$ for all $a,b \geq 0$, 
	we deduce from \eqref{eq:resolvent_change} that
	\[
	\|
	R(\lambda,T)^{k+1} (I-T)x
	\|^2 \leq 
	2\|
	(\lambda-1) R(\lambda,T)^{k+1}x
	\|^2  + 2\|
	R(\lambda,T)^kx
	\|^2
	\]
	and
	\[\|
	(\lambda-1) R(\lambda,T)^{k+1}x
	\|^2  \leq 
	2	\|
	R(\lambda,T)^{k+1} (I-T)x
	\|^2 + 2\|
	R(\lambda,T)^kx
	\|^2
	\]
	for all $\lambda \in \mathbb{E}$ and $x \in X$.
	Moreover, since $T$ is a Kreiss operator, it follows that 
	there exists $M_3 >0$ such that 
	\[
	\|
	R(\lambda,T)^k x
	\| \leq \frac{M_3\|R(\lambda,T)x\|}{(|\lambda| - 1)^{k-1}}
	\]
	for all $\lambda \in \mathbb{E}$ and $x \in X$.
	Combining this with Theorem~\ref{thm:GSF}, we see that 
	\[
	\sup_{1<r<2}
	(r-1)^{2k-1}
	\int_0^{2\pi}
	\|
	R(re^{i \theta},T)^kx
	\|^2 d\theta
	\leq M_4 \|x\|^2
	\]
	for all $x \in X$ and some constant $M_4>0$.
	Thus, statements~(ii-a) and (ii-b) are equivalent. 
\end{proof}

\section{Decay rates of perturbed discrete semigroups}
\label{sec:perturbation}
We use the following Sherman--Morrison--Woodbury formula
as in the robustness analysis for strong stability
of discrete operator semigroups established in \cite{Paunonen2015}. This formula
can be obtained from a direct calculation, and hence the proof is omitted.
\begin{lemma}
	Let $X$ and $Y$ be Banach spaces.
	Let $A \in \mathcal{L}(X)$, $B \in \mathcal{L}(Y,X)$, $C \in \mathcal{L}(X,Y)$, 
	and $\lambda \in \varrho(A)$.
	If $1 \in \varrho(C R(\lambda,A)B)$, then $\lambda \in \varrho(A+BC)$
	and 
	\begin{equation}
		\label{eq:SMW}
		R(\lambda,A+BC) = 
		R(\lambda,A) + R(\lambda,A) B(I - CR(\lambda,A)B)^{-1} C R(\lambda,A).
	\end{equation}
\end{lemma}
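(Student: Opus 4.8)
The plan is to verify by a direct computation that the operator on the right-hand side of \eqref{eq:SMW} is a genuine two-sided inverse of $\lambda I - (A+BC)$; establishing this simultaneously yields $\lambda \in \varrho(A+BC)$ and the claimed identity. To fix notation, I would write $R \coloneqq R(\lambda, A)$ and observe that the hypothesis $1 \in \varrho(CR(\lambda,A)B)$ means exactly that $I - CRB$ is boundedly invertible, so that $W \coloneqq (I - CRB)^{-1}$ is a well-defined bounded operator. The candidate resolvent is then
\[
Q \coloneqq R + RBWCR,
\]
and before any algebra I would check that every composition appearing in $Q$ is well defined by tracking the domains and codomains of $A$, $B$, $C$ through the spaces $X$ and $Y$, so that indeed $Q$ maps $X$ into $X$ boundedly.

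The first main step is to compute $\bigl((\lambda I - A) - BC\bigr) Q$, using that $\lambda I - (A+BC) = (\lambda I - A) - BC$. Expanding and repeatedly applying $(\lambda I - A) R = I$, all terms can be collected into the form $I + B[\,\cdot\,]CR$, where the bracket turns out to be $W - I - (CRB)W$. The crucial observation is that this bracket vanishes, since $W - (CRB)W = (I - CRB)W = I$; hence $\bigl((\lambda I - A) - BC\bigr)Q = I$.

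The second step is the symmetric computation of $Q\bigl((\lambda I - A) - BC\bigr)$. Expanding and using $R(\lambda I - A) = I$ this time, the terms collect into $I + RB[\,\cdot\,]C$, with the bracket equal to $W - I - W(CRB)$, which again vanishes because $W - W(CRB) = W(I - CRB) = I$. Having shown that $Q$ is both a left and a right inverse of $\lambda I - (A+BC)$, I would conclude that $\lambda \in \varrho(A+BC)$ and that $R(\lambda, A+BC) = Q$, which is precisely \eqref{eq:SMW}.

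Since the argument is purely algebraic, there is no serious analytic obstacle; the only point requiring care is the bookkeeping of which operators map between $X$ and $Y$, so that each product is meaningful and the factorizations $B[\,\cdot\,]CR$ and $RB[\,\cdot\,]C$ are legitimate. Both cancellations rest on the single identity $(I - CRB)W = W(I - CRB) = I$, which is exactly what the hypothesis $1 \in \varrho(CR(\lambda,A)B)$ provides.
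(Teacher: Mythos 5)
Your proposal is correct: the two-sided inverse verification goes through, and both cancellations $(I-CRB)W = W(I-CRB) = I$ are exactly right. The paper omits the proof, stating only that the formula "can be obtained from a direct calculation," and your argument is precisely that calculation, so there is nothing to compare beyond noting that the paper's stated domains for $B$ and $C$ appear to be swapped (one needs $B\colon Y\to X$ and $C\colon X\to Y$ for $A+BC$ and $CR(\lambda,A)B$ to typecheck), a bookkeeping point your proof correctly flags.
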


The following corollary of 
Theorem~\ref{thm:GSF} provides a condition for
the preservation of
boundedness and decay properties under perturbations.
Similar conditions were obtained for polynomial
stability
and strong stability of $C_0$-semigroups in
\cite[Theorems~4.1 and 4.2]{Rastogi2020}.
\begin{corollary}
	\label{coro:robustness}
	Let $X$ be a Hilbert space, and 
	let $Y$ be a Banach space.
	Let $T \in \mathcal{L}(X)$ be power-bounded, and let
	$S \in \mathcal{L}(Y,X)$. 
	If $D \in \mathcal{L}(X)$ commutes with $T$ and 
	satisfies 
	$\sup_{\lambda \in \mathbb{E}}\|R(\lambda,T)D\| < 1$,
	then the following statements hold:
	\begin{enumerate}[label=\upshape\alph*), leftmargin=*, widest=b]
		\item $T+D$ is power-bounded.
		\item 
		Let $f \colon (0,\infty) \to (0,\infty)$ 
		be $\alpha$-bounded
		regularly varying for some $\alpha > 0$.
		If
		$\|T^nS\| =  O(1/f(n))$ as $n \to \infty$,
		then 
		$\|(T+D)^nS\| =  O(1/f(n))$ as $n \to \infty$.
	\end{enumerate}
\end{corollary}
\begin{proof}
	Define $\delta \coloneqq \sup_{z \in \mathbb{E}}\|R(z,T)D\| < 1$,
	and let $\lambda \in \mathbb{E}$.
	Since 
	$1 \in \varrho(R(\lambda,T)D)$,
	the Sherman--Morrison--Woodbury formula \eqref{eq:SMW} gives
	\begin{equation}
		\label{eq:resol_TD_rep}
		R(\lambda,T+D) = R(\lambda,T) + R(\lambda,T)D (I - R(\lambda,T)D)^{-1} R(\lambda,T).
	\end{equation}
	Moreover, 
	\begin{equation}
		\label{eq:D_bound}
		\|R(\lambda,T)D\| \,\|(I - R(\lambda,T)D)^{-1}\| \leq \frac{\delta }{1-
			\delta}.
	\end{equation}
	a) By \eqref{eq:resol_TD_rep} and \eqref{eq:D_bound},
	\begin{equation}
		\label{eq:resol_TD_boundx}
		\|R(\lambda,T+D) x\| \leq  
		\frac{1}{1-\delta}\|R(\lambda,T)x\|
	\end{equation}
	for all $x \in X$.
	Since  $D$ commutes with $R(z,T)$ for all $z \in \mathbb{E}$,
	we have
	\[
	\sup_{z \in \mathbb{E}}\|R(z,T^*)D^*\| =
	\sup_{z \in \mathbb{E}}\|DR(z,T)\| = \delta.
	\]
	As in \eqref{eq:resol_TD_boundx}, it follows that
	\begin{equation}
		\label{eq:resol_TD_bound_adj}
		\|R(\lambda,T^*+D^*) x\| \leq  
		\frac{1}{1-\delta}\|R(\lambda,T^*)x\|
	\end{equation}
	for all $x \in X$.
	By Theorem~\ref{thm:GSF}, there exists $M_1>0$ such that 
	\[
	\sup_{1 < r < 2} (r-1)
	\int_0^{2\pi} \left(
	\|R(re^{i \theta}, T) x\|^2 + \|R(re^{i \theta}, T^*) x\|^2
	\right) d\theta \leq M_1 \|x\|^2
	\]
	for all $x \in X$. This, together with the estimates \eqref{eq:resol_TD_boundx}
	and \eqref{eq:resol_TD_bound_adj}, yields
	\begin{align*}
		&\sup_{1 < r < 2} (r-1)
		\int_0^{2\pi} \left(
		\|R(re^{i \theta}, T+D) x\|^2 + \|R(re^{i \theta}, T^*+D^*) x\|^2
		\right) d\theta \leq \frac{M_1}{(1-\delta)^2} \|x\|^2
	\end{align*}
	for all $x \in X$.
	Using Theorem~\ref{thm:GSF} again, we have that 
	$T+D$ is power-bounded.
	
	b)
	From the commutativity of $D$ and $T$, we see that 
	$R(\lambda,T)$ commutes with 
	\[
	H(\lambda)\coloneqq 
	R(\lambda,T)D(I-R(\lambda,T)D)^{-1}.
	\] 
	Let $k \in \mathbb{N}$ satisfy $k >  \alpha + 1/2$.
	Since \eqref{eq:resol_TD_rep} yields
	\[
	R(\lambda,T+D)^k = (1+H(\lambda))^k R(\lambda,T)^k,
	\]
	we have by \eqref{eq:D_bound},
	\begin{equation}
		\label{eq:power_resol_bound}
		\|R(\lambda,T+D)^k Sy\| \leq 
		\frac{1}{( 1-\delta)^k}
		\|R(\lambda,T)^k Sy\|
	\end{equation} 
	for all $y \in Y$.
	By~Theorem~\ref{thm:int_cond_chara}, 
	there exists $M_2>0$ such that
	\[
	\sup_{1<r<2} F_k(r^2-1) \int_{0}^{2\pi}
	\|R(r e^{i \theta}, T)^k S y\|^2 d\theta \leq M_2 \|y\|^2
	\]
	for all $y \in Y$,
	where $F_k$ is as in that theorem.
	Combining this with \eqref{eq:power_resol_bound},
	we obtain
	\[
	\sup_{1<r<2} F_k(r^2-1) \int_{0}^{2\pi}
	\|R(r e^{i \theta}, T+D)^k S y\|^2 d\theta \leq \frac{M_2}{(1-\delta)^{2k}} \|y\|^2
	\]
	for all $y \in Y$,
	Thus, Theorem~\ref{thm:int_cond_chara} 
	shows that $\|(T+D)^nS\| = O(1/f(n))$ as $n \to \infty$.
\end{proof}

\section{Relation between 
	semigroup decay and summability conditions}
\label{sec:summability}
Now we study the relation between the rate of decay 
of discrete operator semigroups and summability conditions.
\begin{proposition}
	\label{prop:weighted_sum_decay}
	Let $X$ and $Y$ be Banach spaces. Let $T \in \mathcal{L}(X)$,
	$S \in \mathcal{L}(Y,X)$, and $p > 0$.
	Then the following statements hold:
	\begin{enumerate}[label=\upshape\alph*), leftmargin=*, widest=b]
		\item 
		Assume that $T$ is power-bounded.
		If there exist functions $f,G\colon \mathbb{N} \to (0,\infty)$ 	such that
		\begin{equation}
			\label{eq:f_sum}
			\sum_{m=1}^{n} f(m) \|T^mSy\|^p \leq G(n) \|y\|^p
		\end{equation}
		for all $n \in \mathbb{N}$ and $y \in Y$,
		then 
		\begin{equation}
			\label{eq:T_decay}
			\|T^nS\| \leq  \frac{KG(n)^{1/p}}{F(n)^{1/p}},
		\end{equation}
		for all $n \in \mathbb{N}$,
		where $K \coloneqq \sup_{n \in \mathbb{N}_0} \|T^n\|$ and
		\begin{equation}
			\label{eq:summarability_Fn}
			F(n) \coloneqq \sum_{m=1}^n f(m).
		\end{equation}
		\item 
		Let $f,g\colon (0,\infty) \to (0,\infty)$ be $\alpha$-bounded
		regularly varying for some constant $\alpha \geq 0$.
		Define 
		\[
		F(n) \coloneqq \sum_{m=1}^n f(m)g(m)\quad \text{and} \quad 
		G(n) \coloneqq \sum_{m=1}^{n} \frac{1}{mg(m)}
		\]
		for $n \in \mathbb{N}$.
		If $\|T^nS\| = O(1/F(n)^{1/p})$ as $n \to \infty$, then
		there exists a constant $C>0$ such that 
		\[
		\sum_{m=1}^n f(m) \|T^m Sy\|^p \leq 
		CG(n) \|y\|^p
		\]
		for all $n \in \mathbb{N}$ and $y \in Y$.
	\end{enumerate}
\end{proposition}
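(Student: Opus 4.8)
The plan is to use power-boundedness to bound every orbit below by its $n$-th term. For $1\le m\le n$ and $y\in Y$, the identity $T^nSy=T^{n-m}(T^mSy)$ gives $\|T^nSy\|\le K\|T^mSy\|$, hence $\|T^mSy\|^p\ge\|T^nSy\|^p/K^p$. Weighting by $f(m)$ and invoking the hypothesis~\eqref{eq:f_sum},
\[
G(n)\|y\|^p\ \ge\ \sum_{m=1}^n f(m)\|T^mSy\|^p\ \ge\ \frac{\|T^nSy\|^p}{K^p}\sum_{m=1}^n f(m)\ =\ \frac{F(n)}{K^p}\,\|T^nSy\|^p .
\]
Rearranging for $\|T^nSy\|$ and taking the supremum over $\|y\|\le 1$ yields \eqref{eq:T_decay}.

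\textbf{Part b).} Here I would first convert the decay hypothesis into a pointwise bound: there exist $C_0>0$ and $m_0\in\mathbb{N}$ with $\|T^mSy\|^p\le C_0\|y\|^p/F(m)$ for all $m\ge m_0$ and $y\in Y$. Substituting this into the sum reduces the claim to bounding $\sum_m f(m)/F(m)$ by a constant multiple of $G(n)=\sum_m 1/(mg(m))$, so it suffices to prove the term-wise inequality
\[
\frac{f(m)}{F(m)}\ \le\ \frac{C_1}{m\,g(m)},\qquad\text{i.e.}\qquad m\,f(m)g(m)\ \le\ C_1\,F(m),
\]
for all large $m$ and some constant $C_1>0$.

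To establish this, set $h\coloneqq fg$. Since $th'(t)/h(t)=tf'(t)/f(t)+tg'(t)/g(t)\le 2\alpha$ for a.e.\ large $t$, the function $h$ is $(2\alpha)$-bounded regularly varying; as in the proof of Lemma~\ref{lem:basic_property}, $t\mapsto h(t)/t^{2\alpha}$ is eventually non-increasing, which gives the doubling estimate $h(m)\le 2^{2\alpha}h(\lceil m/2\rceil)$ for large $m$. Because $h$ is eventually non-decreasing,
\[
F(m)=\sum_{j=1}^m h(j)\ \ge\ \sum_{\lceil m/2\rceil\le j\le m} h(j)\ \ge\ \Big\lfloor\tfrac{m}{2}\Big\rfloor\,h(\lceil m/2\rceil)\ \ge\ 2^{-2\alpha}\Big\lfloor\tfrac{m}{2}\Big\rfloor\,h(m),
\]
which is precisely the inequality $m\,h(m)\le C_1F(m)$.

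Combining the two ingredients,
\[
\sum_{m=m_0}^{n} f(m)\|T^mSy\|^p\ \le\ C_0C_1\|y\|^p\sum_{m=m_0}^{n}\frac{1}{m\,g(m)}\ \le\ C_0C_1\,G(n)\|y\|^p,
\]
while the finitely many initial terms ($m<m_0$) are dominated by a constant multiple of $\|y\|^p$, hence by $G(n)\|y\|^p$ since $G$ is positive and non-decreasing; adding these gives the asserted estimate. I expect the main obstacle to be the partial-sum comparison $m\,h(m)\le C_1F(m)$: its content is that a $(2\alpha)$-bounded regularly varying $h$ grows slowly enough that $\sum_{j\le m}h(j)$ is comparable to $m\,h(m)$, and this rests on the doubling property extracted from the logarithmic-derivative bound.
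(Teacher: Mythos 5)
Your proof is correct. Part a) is identical to the paper's argument: write $T^nSy=T^{n-m}T^mSy$, bound $\|T^nSy\|^p\le K^p\|T^mSy\|^p$, sum against the weights $f(m)$, and divide by $F(n)$. Part b) also follows the paper's overall structure --- convert the decay hypothesis into the pointwise bound $\|T^mSy\|^p\le C_0\|y\|^p/F(m)$, reduce everything to the term-wise comparison $m\,f(m)g(m)\le C_1F(m)$, and absorb the finitely many initial terms using $G(n)\ge G(1)>0$. The one place where you genuinely diverge is the proof of that comparison: the paper integrates $f g$ by parts on $[t_0,t_1]$ and uses the logarithmic-derivative bound to get $\int_{t_0}^{t_1}f(t)g(t)\,dt\ge t_1f(t_1)g(t_1)/(2(1+2\alpha))$, then dominates the sum $F(m)$ by that integral; you instead use the eventual monotonicity of $t\mapsto h(t)/t^{2\alpha}$ (exactly as in the proof of Lemma~\ref{lem:basic_property}.a)) to get the doubling bound $h(m)\le 2^{2\alpha}h(\lceil m/2\rceil)$ and then count the terms of $F(m)$ over $[\lceil m/2\rceil,m]$. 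Your route is slightly more elementary and stays entirely discrete, avoiding the sum-versus-integral comparison that the paper glosses over near $t=0$; the paper's integration-by-parts route yields an explicit constant $2(1+2\alpha)$ and is the same device it reuses elsewhere. Both are complete proofs.
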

\begin{proof}
	a)
	For all $n \in \mathbb{N}$ and $y \in Y$,
	\begin{align*}
		F(n) \|T^nSy\|^p \leq \sum_{m=1}^n f(m)
		\|T^{n-m}\|^p\, \|T^m Sy\|^p \leq 
		K^p \sum_{m=1}^n f(m)\|T^m Sy\|^p.
	\end{align*}
	From this and \eqref{eq:f_sum},
	we obtain the desired estimate \eqref{eq:T_decay}.
	
	b) By assumption,
	there exists $t_0 >0$ such that 
	\begin{equation}
		\label{eq:fg_estimates}
		\frac{tf'(t)}{f(t)} \leq  \alpha 
		\quad \text{and} \quad 
		\frac{tg'(t)}{g(t)} \leq \alpha 
	\end{equation}
	for a.e.~$t \geq t_0$.
	For all $t_1 \geq t_0$,
	integration by parts yields
	\begin{align*}
		\int_{t_0}^{t_1} f(t)g(t)dt &= 
		t_1f(t_1)g(t_1) - t_0f(t_0)g(t_0) -
		\int_{t_0}^{t_1} t (f'(t)g(t) + f(t)g'(t)) dt \\
		&\geq 
		t_1f(t_1)g(t_1) - t_0f(t_0)g(t_0)- 2 \alpha 
		\int_{t_0}^{t_1} f(t)g(t) dt.
	\end{align*}
	If $t_1 \geq 2t_0$, then
	\[
	t_1f(t_1)g(t_1) \geq  2t_0f(t_0)g(t_0),
	\]
	and hence
	\begin{equation}
		\label{eq:int_fg_estimate}
		\int_{t_0}^{t_1} f(t)g(t)dt \geq 
		\frac{t_1f(t_1)g(t_1)}{2(1+2\alpha)}.
	\end{equation}

	By assumption,
	there exist $M_0 >0$ and $n_0 \in \mathbb{N}$
	such that for all $n \geq n_0$,
	\begin{equation}
		\label{eq:TS_Fg_bound}
		\|T^nS\| \leq \frac{M_0}{F(n)^{1/p}}.
	\end{equation}
	Let $n_1 \in \mathbb{N}$ satisfy
	$ n_1 \geq  \max \{n_0,\,2t_0 \}$.
	By \eqref{eq:int_fg_estimate}, we have
	\[
	F(m) \geq 
	\int_0^{t_0} f(t)g(t) dt + 
	\int_{t_0}^{m} f(t)g(t) dt
	\geq \frac{mf(m)g(m)}{2(1+2\alpha)}
	\]
	for all $m \geq n_1$.
	This gives
	\begin{equation}
		\label{eq:f_Fg_bound}
		\sum_{m=n_1+1}^{n}   \frac{f(m)}{F(m)} \leq 
		2(1+2\alpha) \sum_{m=n_1+1}^{n} \frac{1}{mg(m)} \leq 
		2(1+2\alpha) G(n)
	\end{equation}
	for all $n \geq n_1+1$.
	Using 
	the estimates~\eqref{eq:TS_Fg_bound} and \eqref{eq:f_Fg_bound}, we obtain
	\begin{align*}
		\sum_{m=1}^n f(m) \|T^m Sy\|^p &\leq 
		\left(
		\sum_{m=1}^{n_1} f(m) \|T^m S \|^p + M_0^p
		\sum_{m=n_1+1}^{n}   \frac{f(m)}{F(m)}
		\right) \|y\|^p \\
		&\leq 
		\left(
		\sum_{m=1}^{n_1} f(m) \|T^m S \|^p + 2(1+2\alpha)M_0^p G(n)
		\right) \|y\|^p
	\end{align*}
	for all $n \geq n_1+1$ and $y\in Y$.
	Thus, the desired conclusion holds.
\end{proof}

The following result is a special case of Proposition~\ref{prop:weighted_sum_decay}.
\begin{corollary}
	\label{coro:summarability}
	Let $X$ and $Y$ be Banach spaces. 
	Let $T \in \mathcal{L}(X)$,  
	$S \in \mathcal{L}(Y,X)$, and $p>0$.
	Let 
	$f\colon (0,\infty) \to (0,\infty)$ be $\alpha$-bounded
	regularly varying for some  $\alpha \geq 0$, and define 
	$F$ by \eqref{eq:summarability_Fn}.
	Then the following statements hold:
	\begin{enumerate}[label=\upshape\alph*), leftmargin=*, widest=b]
		\item 
		Assume that $T$ is power-bounded.
		If there exists a constant $C >0$ such that 
		\begin{equation}
			\label{eq:f_sum_coro1}
			\sum_{n=1}^{\infty} f(n) \|T^nSy\|^p \leq C \|y\|^p
		\end{equation}
		for all $y \in Y$, then $\|T^nS\| = O(1/F(n)^{1/p})$ as $n \to \infty$.
		\item 
		If $\|T^nS\| = O(1/F(n)^{1/p})$ as $n \to \infty$, then
		there exists a constant $C >0$ such that 
		\begin{equation}
			\label{eq:f_sum_coro2}
			\sum_{m=1}^{n} f(m) \|T^mSy\|^p \leq C \log n \|y\|^p
		\end{equation}
		for all $n \in \mathbb{N}$ and $y \in Y$.
	\end{enumerate}
\end{corollary}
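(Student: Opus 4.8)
Both parts are direct specializations of Proposition~\ref{prop:weighted_sum_decay}, so the plan is simply to choose the right auxiliary functions and then replace the resulting sums by the elementary quantities in the statement.

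For part~a), I would apply Proposition~\ref{prop:weighted_sum_decay}.a) with the \emph{constant} majorant $G(n) \equiv C$. Truncating the convergent series in the hypothesis \eqref{eq:f_sum_coro1} gives
\[
\sum_{m=1}^{n} f(m)\|T^mSy\|^p \leq \sum_{m=1}^{\infty} f(m)\|T^mSy\|^p \leq C\|y\|^p
\]
for every $n \in \mathbb{N}$ and $y \in Y$, which is exactly condition \eqref{eq:f_sum} with this $G$ and with $F$ as in \eqref{eq:summarability_Fn}. The conclusion \eqref{eq:T_decay} then reads $\|T^nS\| \leq KC^{1/p}/F(n)^{1/p}$, that is, $\|T^nS\| = O(1/F(n)^{1/p})$.

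For part~b), I would invoke Proposition~\ref{prop:weighted_sum_decay}.b) with the splitting $g \equiv 1$. The constant function $g \equiv 1$ is $\alpha$-bounded regularly varying for every $\alpha \geq 0$, since $tg'(t)/g(t) = 0 \leq \alpha$, so the hypotheses on the pair $(f,g)$ hold with the $f$ of the corollary and the same $\alpha$. With this choice the product weight collapses, $f(m)g(m) = f(m)$, so the function $\sum_{m=1}^n f(m)g(m)$ of the proposition coincides with $F(n)$ in \eqref{eq:summarability_Fn}; hence the decay hypothesis $\|T^nS\| = O(1/F(n)^{1/p})$ is precisely what Proposition~\ref{prop:weighted_sum_decay}.b) requires. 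Its conclusion supplies a constant $C'>0$ with
\[
\sum_{m=1}^{n} f(m)\|T^mSy\|^p \leq C' G(n)\|y\|^p, \qquad G(n) = \sum_{m=1}^{n}\frac{1}{m},
\]
for all $n \in \mathbb{N}$ and $y \in Y$.

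The only remaining step — and the one place where a genuine estimate enters — is to replace the harmonic sum $G(n) = H_n$ by the logarithm in \eqref{eq:f_sum_coro2}. I would use $H_n \leq 1 + \log n$ together with $\log n \geq \log 2$ for $n \geq 2$ to obtain $H_n \leq (1 + 1/\log 2)\log n$, so that \eqref{eq:f_sum_coro2} holds for all $n \geq 2$ with $C \coloneqq C'(1 + 1/\log 2)$. I expect this harmonic-to-logarithm comparison to be the main (though still routine) obstacle, the single subtlety being that the right-hand side of \eqref{eq:f_sum_coro2} vanishes at $n=1$; the inequality should therefore be understood for $n \geq 2$ (equivalently, with $\log n$ read as $1+\log n$). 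The essential content of the corollary lies entirely in the two specializations $G \equiv C$ and $g \equiv 1$ of Proposition~\ref{prop:weighted_sum_decay}.
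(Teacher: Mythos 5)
Your proof is correct and follows essentially the same route as the paper: part a) via Proposition~\ref{prop:weighted_sum_decay}.a) with $G(n)\equiv C$, and part b) via Proposition~\ref{prop:weighted_sum_decay}.b) with $g\equiv 1$ followed by a harmonic-sum-to-logarithm comparison (the paper uses $\sum_{m=1}^n 1/m \leq 2\log(n+1)$ for $n\geq 2$ where you use $1+1/\log 2$). Your remark that \eqref{eq:f_sum_coro2} must be read for $n\geq 2$ because the right-hand side vanishes at $n=1$ is a fair observation that the paper glosses over.
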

\begin{proof}
	Statement~a) follows from Proposition~\ref{prop:weighted_sum_decay}.a) with $G(n) \equiv C$.
	If $g(t) \equiv 1$ in 
	Proposition~\ref{prop:weighted_sum_decay}.b), then
	\[
	G(n) = \sum_{m=1}^n \frac{1}{m} \leq 2 \log (n+1)
	\]  
	for all $n \in \mathbb{N}$.
	Hence, 
	Proposition~\ref{prop:weighted_sum_decay}.b) shows that 
	statement~b) is true.
\end{proof}
There is a logarithmic gap 
in \eqref{eq:f_sum_coro1} and \eqref{eq:f_sum_coro2}.
However, in the special case when $S = I - T$ and $f(n) = n^{\alpha p - 1}$,
this gap can be bridged for a multiplication operator  $T$ on $L^q$-space 
with $1\leq q \leq p < \infty$. 
An analogous result for $C_0$-semigroups was obtained under the condition $\alpha p =1$  in
\cite[Theorem~3.8]{Wakaiki2024JMAA}.
For discrete semigroups, we use the spectral property based on
$\delta$-Stolz domains.
\begin{proposition}
	\label{prop:multiplication_summability}
	Let 
	$(\Omega, \mu)$ be a $\sigma$-finite measure space.
	Let $\phi \colon \Omega \to \mathbb{C}$ 
	be measurable with essential range $\phi_{\mathrm{ess}} (\Omega)$
	in $\overline{\mathbb{D}}$.
	Let $1 \leq q \leq p < \infty$, and let 
	$T$ be the multiplication operator induced by $\phi$
	on $L^q(\Omega,\mu)$, i.e., $Tx = \phi x$
	for $x \in L^q(\Omega,\mu)$.
	Then the following statements are equivalent for a
	fixed $\alpha \in (0,1)$:
	\begin{enumerate}[label=\upshape(\roman*), leftmargin=*, widest=ii]
		\item $\|T^n(I-T)\| = O(n^{-\alpha} )$ as $n \to \infty$.
		\item There exists a constant $C>0$ such that 
		\[
		\sum_{n=1}^{\infty} n^{\alpha p-1} \|T^n (I-T)x\|^p \leq C \|x\|^p
		\]
		for all $x \in L^q(\Omega,\mu)$.
	\end{enumerate}
\end{proposition}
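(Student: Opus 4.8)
The plan is to handle the two implications separately, using that $T$ is a contraction on $L^q(\Omega,\mu)$: since $|\phi|\le 1$ $\mu$-a.e., we have $\|T^n\|=\|\phi^n\|_\infty\le 1$, so $\sup_n\|T^n\|=1$, $r(T)\le 1$, and $\sigma(T)=\phi_{\mathrm{ess}}(\Omega)$. For the implication (ii)~$\Rightarrow$~(i) I would apply Proposition~\ref{prop:weighted_sum_decay}.a) with $S=I-T$, $f(n)=n^{\alpha p-1}$, and $G(n)\equiv C$; part a) only needs $f,G\colon\mathbb{N}\to(0,\infty)$, so no regular-variation hypothesis is required. The associated sum $F(n)=\sum_{m=1}^n m^{\alpha p-1}$ satisfies $F(n)\ge c\,n^{\alpha p}$ for some $c>0$ because $\alpha p>0$, whence $F(n)^{1/p}\ge c^{1/p}n^{\alpha}$ and the proposition gives $\|T^n(I-T)\|\le C^{1/p}/F(n)^{1/p}=O(n^{-\alpha})$.

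The substantial direction is (i)~$\Rightarrow$~(ii). First I would convert the decay into a spectral condition. Since $T$ is power-bounded it is a Kreiss operator, and for the multiplication operator one computes directly $\|T^n(I-T)\|=\|\phi^n(1-\phi)\|_\infty=\sup_{\lambda\in\sigma(T)}|\lambda|^n|1-\lambda|$, so hypothesis~\eqref{eq:spectrum_bound} holds with $C=1$. Proposition~\ref{prop:spectrum_characterization} then lets me upgrade (i) to $\sigma(T)=\phi_{\mathrm{ess}}(\Omega)\subset\overline{S^{1/\alpha}_c}$ for some $c\ge 2$, which by the definition~\eqref{eq:Stolz_domain} of the Stolz domain unwinds to the pointwise bound $|1-\phi(\omega)|\le c^{\alpha}\bigl(1-|\phi(\omega)|\bigr)^{\alpha}$ for $\mu$-a.e.\ $\omega$. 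Since $T^n(I-T)$ annihilates functions supported on $\{\phi=1\}$ and $\{|\phi|=1,\ \phi\ne 1\}$ is $\mu$-null, I may assume $|\phi|<1$ a.e.

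For the summability estimate, write $dw\coloneqq|x|^q\,d\mu$, a finite measure of total mass $w(\Omega)=\|x\|_{L^q}^q$, and set $r\coloneqq p/q\ge 1$. Then $\|T^n(I-T)x\|_{L^q}^q=\int_\Omega|\phi|^{nq}|1-\phi|^q\,dw$, so $\|T^n(I-T)x\|_{L^q}^p=\bigl(\int_\Omega f_n\,dw\bigr)^r$ with $f_n\coloneqq|\phi|^{nq}|1-\phi|^q$. Jensen's inequality for the convex map $t\mapsto t^r$ against the probability measure $w/w(\Omega)$ gives $\bigl(\int f_n\,dw\bigr)^r\le w(\Omega)^{r-1}\int f_n^r\,dw$, and $f_n^r=|\phi|^{np}|1-\phi|^p$ because $qr=p$ --- this is exactly where the hypothesis $q\le p$ is used. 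Summing against $n^{\alpha p-1}$ and exchanging sum and integral by Tonelli yields
\[
\sum_{n=1}^\infty n^{\alpha p-1}\|T^n(I-T)x\|_{L^q}^p \le w(\Omega)^{r-1}\int_\Omega|1-\phi|^p\Bigl(\sum_{n=1}^\infty n^{\alpha p-1}|\phi|^{np}\Bigr)dw.
\]

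The crux is a uniform pointwise bound on the integrand. The elementary estimate $\sum_{n=1}^\infty n^{\beta-1}u^n\le C_\beta(1-u)^{-\beta}$, valid for all $u\in[0,1)$ and $\beta>0$ because $(1-u)^{\beta}\sum_n n^{\beta-1}u^n$ is continuous on $[0,1)$ with finite endpoint limits $0$ and $\Gamma(\beta)$, applied with $\beta=\alpha p$ and $u=|\phi|^p$, together with $1-|\phi|^p\ge 1-|\phi|$ (valid for $p\ge 1$), gives $\sum_n n^{\alpha p-1}|\phi|^{np}\le C_{\alpha p}(1-|\phi|)^{-\alpha p}$. Combining this with the Stolz bound in the form $|1-\phi|^p\le c^{\alpha p}(1-|\phi|)^{\alpha p}$, the integrand is dominated by the constant $C_{\alpha p}c^{\alpha p}$, and since $w(\Omega)^{r-1}w(\Omega)=w(\Omega)^r=\|x\|_{L^q}^p$ the estimate in (ii) follows. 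I expect the main obstacle to be the spectral step: assembling the decay estimate into the pointwise Stolz inclusion via Proposition~\ref{prop:spectrum_characterization} and then matching the exponents ($qr=p$ and $1-u^p\ge 1-u$) so that the $q\le p$ hypothesis lines up precisely with the requirement $r\ge 1$ in Jensen's inequality.
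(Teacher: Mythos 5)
Your proposal is correct and follows essentially the same route as the paper: (ii)~$\Rightarrow$~(i) via the general summability-to-decay transfer, and (i)~$\Rightarrow$~(ii) by localizing $\sigma(T)=\phi_{\mathrm{ess}}(\Omega)$ in $\overline{S^{1/\alpha}_c}$ (the paper invokes Lemma~\ref{lem:Banach_RK_cond} directly, which is the content of the implication you cite), then Jensen's inequality with exponent $p/q\ge 1$ and a uniform pointwise bound on $|1-\lambda|^p\sum_n n^{\alpha p-1}|\lambda|^{np}$ over the Stolz domain. The only differences are cosmetic: your continuity argument for the elementary sum estimate replaces the paper's explicit two-case integral comparison, and your verification of \eqref{eq:spectrum_bound} is not actually needed for the forward spectral implication.
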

\begin{proof}
	Since
	$\phi(\omega) \in \phi_{\mathrm{ess}}(\Omega)$
	for $\mu$-a.e.~$\omega \in \Omega$ (see, e.g., 
	\cite[Chapter VII, Exercise~19]{Lang1993}), 
	$T$ is power-bounded.
	The implication (ii) $\Rightarrow$ (i) is obtained from
	Proposition~\ref{prop:weighted_sum_decay}.a) with 
	$f(n) =
	n^{\alpha p - 1}$, $G(n) \equiv C$, and $S = I-T$.
	It remains to prove the implication
	(i) $\Rightarrow$ (ii).

	Recall from \cite[Proposition~I.4.10]{Engel2000} that
	$\sigma(T) = \phi_{\mathrm{ess}}(\Omega)$.
	Since 
	$\sigma(T) \subset \overline{S^{1/\alpha}_{c}}$
	by Lemma~\ref{lem:Banach_RK_cond},
	we obtain
	\begin{equation}
		\label{eq:phi_omega_ae}
		\phi(\omega) \in \overline{S^{1/\alpha}_{c}}\quad 
		\text{for $\mu$-a.e.~$\omega \in \Omega$}.
	\end{equation}
	
	Define $\beta \coloneqq 
	\alpha p - 1$.
	For all $x \in L^q(\Omega, \mu)\setminus \{ 0\}$,
	\begin{align}
		\label{eq:Lp_sum}
		\frac{1}{\|x\|^p}
		\sum_{n=1}^{\infty} n^{\beta }\|T^n(I-T)x\|^p  =
		\sum_{n=1}^{\infty}n^{\beta }
		\left(
		\int_{\Omega} |\phi(\omega)^n (1 - \phi(\omega) ) |^q
		\frac{|x(\omega)|^q}{\|x\|^q}
		d\mu(\omega)
		\right)^{p/q} 
	\end{align}
	Since $p/q \geq 1$,
	Jensen's inequality yields
	\begin{align}
		\label{eq:Lp_sum_Jensen}
		\left(
		\int_{\Omega} 
		|\phi(\omega)^n (1 - \phi(\omega) ) |^q
		\frac{|x(\omega)|^q}{\|x\|^q}
		d\mu(\omega)
		\right)^{p/q}\leq 
		\int_{\Omega} |\phi(\omega)|^{np} 
		|1-\phi(\omega)|^p 
		\frac{|x(\omega)|^q}{\|x\|^q} d\mu(\omega)
	\end{align}
	for all $n \in \mathbb{N}$ and $x \in L^q(\Omega, \mu)
	\setminus \{ 0\}$.
	From
	\eqref{eq:phi_omega_ae}--\eqref{eq:Lp_sum_Jensen},
	we now investigate
	\[
	|1 - \lambda|^p
	\sum_{n=1}^{\infty} n^{\beta} |\lambda|^{np}
	\]
	for $\lambda \in \overline{S^{1/\alpha}_c}\setminus \{1 \}$.
	Since $\overline{S^{1/\alpha}_c} \cap \mathbb{T} = \{ 1\}$,
	it follows that 
	$\lambda
	\in \overline{S^{1/\alpha}_c} \setminus \{1 \}$ satisfies
	$|\lambda|< 1 $.
	If $\lambda \in \overline{S^{1/\alpha}_{c}}$
	satisfies $|\lambda| \leq e^{-1}$, then
	\[
	|1-\lambda|^p\sum_{n=1}^{\infty} n^{\beta} |\lambda|^{np} \leq 
	(1+e^{-1})^{p}
	\sum_{n=1}^{\infty} n^{\beta} e^{-np} \eqqcolon C_1 < \infty.
	\]
	Suppose that $\lambda \in \overline{S^{1/\alpha}_{c}}$
	satisfies $e^{-1} < |\lambda| < 1$, and define 
	$r \coloneqq 1/|\lambda| $. If $n \leq t < n+1$
	for some $n \in \mathbb{N}$, then
	\[
	\frac{n^{\beta}}{r^{np}} \leq \frac{t^{\beta}}{r^{(t-1)p}} =
	t^{\beta} e^{-(p\log r) (t-1)}.
	\]
	Therefore,
	\begin{align*}
		\sum_{n=1}^{\infty} n^{\beta} |\lambda|^{np} 
		&=
		\sum_{n=1}^{\infty} \frac{n^{\beta}}{r^{np}} \\
		&\leq 
		\int_1^{\infty} t^{\beta} e^{-(p\log r) (t-1)} dt \\
		&\leq 
		\frac{e^p}{(p \log r)^{\beta+ 1}}	
		\int_0^{\infty} t^{\beta} e^{-t} dt \\
		&\leq
		\frac{e^p \Gamma(\beta + 1)}{(p \log r)^{\beta + 1}}.
	\end{align*}
	Since $r-1 \leq 2\log r$, we have 
	\[
	\sum_{n=1}^{\infty} n^{\beta} |\lambda|^{np} \leq 
	\frac{C_2}{ (r-1)^{\beta+1}},
	\quad \text{where~}
	C_2 \coloneqq e^p\Gamma(\beta + 1) 
	\left(
	\frac{ 2}{p }
	\right)^{\beta + 1}.
	\]
	From $\lambda \in \overline{S^{1/\alpha}_{c}}$, it follows that 
	\[
	|1 - \lambda|^{1/\alpha} \leq c (1 - |\lambda|).
	\]
	Recalling that $\beta+1 = \alpha p$,
	we obtain
	\[
	|1-\lambda|^p \sum_{n=1}^{\infty} n^{\beta} |\lambda|^{np} \leq  c^{\alpha p }(1 - |\lambda|)^{\alpha p}
	\frac{C_2|\lambda|^{\beta+1}}{(1 - |\lambda|)^{\beta+1}}
	\leq c^{\alpha p }C_2.
	\]
	
	We have shown that 
	\begin{equation}
		\label{eq:mo_sum_bound}
		|1-\lambda|^p \sum_{n=1}^{\infty} n^{\beta} |\lambda|^{np}
		\leq C_3 \coloneqq \max \{ C_1,\, c^{\alpha p }C_2\}
	\end{equation}
	for all $\lambda \in \overline{S^{1/\alpha}_{c}} \setminus \{1 \}$.
	Let $\Omega_0 \coloneqq \Omega \setminus \{\omega \in \Omega: \phi(\omega) = 1\}$. 
	Then
	\begin{align*}
		\sum_{n=1}^{\infty}n^{\beta }
		\int_{\Omega} |\phi(\omega)|^{np} 
		|1-\phi(\omega)|^p 
		\frac{|x(\omega)|^q}{\|x\|^q} d\mu(\omega)  =
		\sum_{n=1}^{\infty}n^{\beta }
		\int_{\Omega_0} |\phi(\omega)|^{np} 
		|1-\phi(\omega)|^p 
		\frac{|x(\omega)|^q}{\|x\|^q} d\mu(\omega)
	\end{align*}
	for all $x \in L^q(\Omega, \mu)\setminus \{ 0\}$.
	Using \eqref{eq:phi_omega_ae}, \eqref{eq:mo_sum_bound}, 
	and 
	the monotone convergence theorem, we obtain
	\begin{align*}
		\sum_{n=1}^{\infty}n^{\beta }
		\int_{\Omega_0} |\phi(\omega)|^{np} 
		|1-\phi(\omega)|^p 
		\frac{|x(\omega)|^q}{\|x\|^q} d\mu(\omega)
		&\leq \frac{C_3}{\|x\|^q} \int_{\Omega_0} |x(\omega)|^q d\mu(\omega) \leq C_3
	\end{align*}
	for all $x \in L^q(\Omega, \mu)\setminus \{ 0\}$.
	Thus, statement~(ii) with $C = C_3$ holds.
\end{proof}

We have seen the relation between semigroup decay and 
summability conditions.
On the other hand,
the relation between semigroup decay
and resolvent growth has been investigated 
in Section~\ref{sec:resolvent_growth}.
The following proposition shows how
summability conditions can be transferred to resolvent estimates.
\begin{proposition}
	\label{prop:weighted_sum_resolvent}
	Let $X$, $Y$, and $Z$ be Banach spaces. Let $T \in \mathcal{L}(X)$
	satisfy $r(T) \leq 1$, and 
	$S_1 \in \mathcal{L}(X,Z)$ and $S_2 \in \mathcal{L}(Y,X)$.
	Let 
	$f\colon (0,\infty) \to (0,\infty)$ be $\alpha$-bounded 
	regularly varying for some $\alpha \geq 0$, and assume that 
	there exist constants 
	$C>0$ and $p>1$ such that
	\begin{equation}
		\label{eq:f_sum_resol}
		\sum_{n=1}^{\infty} f(n) \|S_1T^nS_2y\|^p \leq C \|y\|^p
	\end{equation}
	for all $y \in Y$.
	Then for all $k \in \mathbb{N}$ with $k > (\alpha+1)/p$,
	there exists  a constant $M>0$ such that 
	\[
	\|S_1R(\lambda,T)^kS_2\| \leq \frac{M}{
		(|\lambda|^q - 1)^{k-1/p} f(1/(|\lambda|^q - 1))^{1/p}
	}
	\]
	for all $\lambda \in \mathbb{C}$ with $1 < |\lambda| < 2$, 
	where $q > 1$ satisfies $1/p+1/q = 1$. 
\end{proposition}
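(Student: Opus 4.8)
The plan is to start from the resolvent power series \eqref{eq:Rk_rep}, namely
\[
S_1 R(\lambda,T)^k S_2 y = \sum_{n=0}^{\infty} \binom{n+k-1}{k-1} \frac{S_1 T^n S_2 y}{\lambda^{n+k}},
\]
valid for all $\lambda \in \mathbb{E}$ since $r(T) \le 1$. Taking norms and splitting off the $n=0$ term (bounded by $\|S_1\|\,\|S_2\|\,\|y\|$ because $|\lambda|>1$), the task reduces to estimating $\sum_{n=1}^{\infty} \binom{n+k-1}{k-1}\,|\lambda|^{-(n+k)}\,\|S_1 T^n S_2 y\|$. First I would apply H\"older's inequality with exponents $p$ and $q$, writing each summand as the product of $a_n \coloneqq f(n)^{1/p}\|S_1 T^n S_2 y\|$ and $b_n \coloneqq \binom{n+k-1}{k-1} f(n)^{-1/p}|\lambda|^{-(n+k)}$. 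The hypothesis \eqref{eq:f_sum_resol} immediately gives $\big(\sum_{n=1}^\infty a_n^p\big)^{1/p} \le C^{1/p}\|y\|$, so everything hinges on controlling $\big(\sum_{n=1}^\infty b_n^q\big)^{1/q}$.

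For the $b_n$-sum I would substitute $\rho \coloneqq |\lambda|^q$ and factor out $\rho^{-k}\le 1$, reducing it to $\sum_{n\ge1} c(n)/\rho^n$ with $c(n) \coloneqq \binom{n+k-1}{k-1}^q f(n)^{-q/p}$. Since $\binom{n+k-1}{k-1}^q = O(n^{(k-1)q})$ and, by Lemma~\ref{lem:basic_property}.b), the function $g \coloneqq f^{q/p}$ is $(\alpha q/p)$-bounded regularly varying, one has $c(n) = O\big(n^{(k-1)q}/g(n)\big)$. This is exactly the form required by Proposition~\ref{prop:cn_bound}, applied with $\beta \coloneqq (k-1)q$ and the function $g$, giving
\[
\sum_{n=1}^{\infty} \frac{c(n)}{\rho^n} = O\left( \frac{1}{(\rho-1)^{\beta+1}\, g\big(1/(\rho-1)\big)} \right)
\]
as $\rho \downarrow 1$. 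Raising to the power $1/q$ and using the identities $(\beta+1)/q = k-1/p$ and $g(\,\cdot\,)^{1/q} = f(\,\cdot\,)^{1/p}$ together with $\rho-1 = |\lambda|^q-1$ produces precisely the claimed denominator; combining with the H\"older factor $C^{1/p}\|y\|$ then yields the stated estimate for $|\lambda|$ close to $1$.

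The step carrying the real content is verifying that the admissibility condition $\beta > (\alpha q/p)-1$ of Proposition~\ref{prop:cn_bound} is equivalent to the hypothesis $k > (\alpha+1)/p$: writing $\beta+1 = (k-1)q+1 > \alpha q/p$, dividing by $q$, and using $1/q = 1-1/p$ collapses it exactly to $k-1/p > \alpha/p$. Keeping the exponent bookkeeping straight through the H\"older split and the $\rho=|\lambda|^q$ substitution is the part most prone to slips, and the clean match between the hypothesis and the admissibility condition is what makes the argument work. Finally, to promote the estimate from a neighborhood of $|\lambda|=1$ to the whole range $1<|\lambda|<2$, I would invoke compactness: on any annulus $\{r_1 \le |\lambda| \le 2\}$ with $r_1>1$ the set lies in the resolvent set (as $\sigma(T)\subset\overline{\mathbb{D}}$), so $\|S_1 R(\lambda,T)^k S_2\|$ is bounded there while the right-hand side stays bounded below by a positive constant, and enlarging $M$ absorbs this regime.
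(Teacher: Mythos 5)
Your proposal is correct and follows essentially the same route as the paper's proof: the series representation \eqref{eq:Rk_rep}, H\"older's inequality with exponents $p$ and $q$ splitting off $f(n)^{1/p}\|S_1T^nS_2y\|$, Lemma~\ref{lem:basic_property}.b) to see that $f^{q/p}$ is $(\alpha q/p)$-bounded regularly varying, and Proposition~\ref{prop:cn_bound} with $\beta=(k-1)q$, whose admissibility condition you correctly verify to be equivalent to $k>(\alpha+1)/p$. The only (harmless) difference is that you make explicit the boundedness argument on the annulus $\{r_1\le|\lambda|\le 2\}$, which the paper leaves implicit.
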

\begin{proof}
	Let $k \in \mathbb{N}$ satisfy $k > (\alpha+1)/p$.
	Using \eqref{eq:Rk_rep}, we obtain
	\[
	S_1R(\lambda,T)^{k}S_2 =
	\frac{S_1S_2}{\lambda^k} 
	+ 
	\frac{1}{\lambda^k} 
	\sum_{n=1}^{\infty} 
	\binom{n+k-1}{k-1}
	\frac{S_1T^n S_2}{\lambda^{n}} .
	\]
	H\"older's inequality yields
	\begin{equation}
		\label{eq:CS_weighted_case}
		\left\|\sum_{n=1}^{\infty} 
		\binom{n+k-1}{k-1}
		\frac{S_1T^n S_2y}{\lambda^{n}}\right\| \leq 
		\left(
		\sum_{n=1}^{\infty}
		f(n) \|S_1T^n S_2y \|^p
		\right)^{1/p}
		\left(
		\sum_{n=1}^{\infty}
		\frac{c(n)}{ |\lambda|^{nq}} 
		\right)^{1/q}
	\end{equation}
	for all $\lambda \in \mathbb{E}$, where 
	\[
	c(n) \coloneqq 	\frac{(n+k)^{(k-1)q}}{ f(n)^{q/p}},\quad n \in \mathbb{N}.
	\]
	
	By Lemma~\ref{lem:basic_property}.b), $h\coloneqq f^{q/p}$ is $(\alpha q/p)$-bounded regularly varying.
	Since $k>(\alpha+1)/p$ is equivalent to
	\[
	(k-1)q > \frac{\alpha q}{p}  - 1
	\]
	and since
	\[
	c(n) = O\left(
	\frac{n^{(k-1)q}}{h(n)}
	\right)
	\]
	as $n \to \infty$, Proposition~\ref{prop:cn_bound} shows that 
	\begin{equation}
		\label{eq:cn_sum_weighted_case}
		\sum_{n=1}^{\infty}
		\frac{c(n)}{r^{nq}} =
		O
		\left(
		\frac{1}{ (r^q - 1)^{(k-1)q + 1} h(1/(r^q-1))}
		\right)
	\end{equation}
	as $r \downarrow 1$. Applying
	\eqref{eq:f_sum_resol} and \eqref{eq:cn_sum_weighted_case}
	to \eqref{eq:CS_weighted_case}, we derive
	the desired estimate.
\end{proof}

\begin{remark}
	It is legitimate to ask if the converse of 
	Proposition~\ref{prop:weighted_sum_resolvent} is true.
	When $p = 2$ and $f(n) = n^{\alpha}$
	for $\alpha \geq 0$,
	the answer of 
	this question is positive for some classes of operators, as shown in
	literature on 
	admissible observation operators; see 
	\cite{Harper2006,Wynn2009,Wynn2009SCL,Wynn2010,
		Jacob2013,LeMerdy2014,Jacob2018admissibility}.
	The systematic study of admissible observation operators traces back to the work \cite{Weiss1989_observation} by Weiss for $C_0$-semigroups.
	We refer to 
	the survey article \cite{Jacob2004Survey} and the books \cite{Staffans2005,Tucsnak2009} for more information
	on admissibility for $C_0$-semigroups.
\end{remark}

\subsection*{Acknowledgements}
This work was supported in part by JSPS KAKENHI Grant Number
24K06866.
The
author would like to thank the editor and the anonymous referee for numerous helpful comments and
suggestions.

\end{document}